\documentclass[12pt,a4paper]{article}
\pdfoutput=1
\usepackage{graphicx}
\usepackage{amsmath}
\usepackage{amsthm}
\usepackage{bm,bbm}
\usepackage[colorlinks]{hyperref}

\newcommand{\vect}[1]{\boldsymbol{#1}}				
\newcommand{\abs}[1]{\lvert#1\rvert}				
\newcommand{\car}[1]{\lvert#1\rvert}				
\newcommand{\floor}[1]{\lfloor#1\rfloor}			
\newcommand{\round}[1]{\lceil#1\rfloor}				
\newcommand{\factrnd}[1]							
	{\mathopen{[\![}#1\mathclose{]\!]}}

\newcommand{\N}{\mathbbm{N}}						
\newcommand{\R}{\mathbbm{R}}						
\newcommand{\Z}{\mathbbm{Z}}						

\newcommand{\U}{\vect{U}}							
\newcommand{\V}{\vect{V}}							
\newcommand{\X}{\vect{X}}							
\newcommand{\Y}{\vect{Y}}							
\newcommand{\Xscaled}{\vect{\mathcal{X}}}			
\newcommand{\Zscaled}{\vect{\mathcal{Z}}}			
\newcommand{\1}{\vect{1}}							

\newcommand{\eps}{\varepsilon}						

\renewcommand{\Pr}{\mathop{\null\mathbbm{P}}}		
\newcommand{\Ex}{\mathop{\null\mathbbm{E}}}			
\newcommand{\I}{\mathop{\null\mathbbm{1}}\nolimits}	
\newcommand{\Law}{\mathop{\null\mathcal{L}}}		

\newcommand{\msc}[1]{\par\small						
 \noindent\textbf{MSC\,2010: }#1}

\newtheorem{theorem}{Theorem}[section]
\newtheorem{lemma}[theorem]{Lemma}
\newtheorem{proposition}[theorem]{Proposition}
\newtheorem{corollary}[theorem]{Corollary}

\theoremstyle{remark}
\newtheorem{remark}[theorem]{Remark}

\makeatletter
	\let\@fnsymbol\@alph
\makeatother

\begin{document}

\title{\vspace{-30pt}Stochastic domination and weak convergence of conditioned 
Bernoulli random vectors}
\author{Erik Broman\footnote{Department of Mathematics, Chalmers University of 
Technology, 412~96 G\"oteborg, Sweden; e-mail: 
\texttt{broman\,(at)\,chalmers.se}}
\and Tim van de Brug\footnote{Department of Mathematics, VU University, De 
Boelelaan 1081a, 1081\,HV Amsterdam, The Netherlands; e-mail: 
\texttt{\char`{t.vande.brug,w.kager,r.w.j.meester\char`}\,(at)\,vu.nl}}
\and Wouter Kager\footnotemark[2] \and Ronald Meester\footnotemark[2]}
\date{\today}

\maketitle

\begin{abstract}
	For $n\geq 1$ let $\X_n$ be a vector of $n$~independent Bernoulli random 
	variables. We assume that $\X_n$ consists of $M$~``blocks'' such that the 
	Bernoulli random variables in block~$i$ have success probability~$p_i$. 
	Here $M$ does not depend on~$n$ and the size of each block is essentially 
	linear in~$n$. Let $\tilde{\X}_n$ be a random vector having the 
	conditional distribution of~$\X_n$, conditioned on the total number of 
	successes being at least~$k_n$, where $k_n$ is also essentially linear 
	in~$n$. Define $\tilde{\Y}_n$ similarly, but with success probabilities 
	$q_i\geq p_i$. We prove that the law of~$\tilde{\X}_n$ converges weakly to 
	a distribution that we can describe precisely. We then prove that $\sup 
	\Pr(\tilde{\X}_n\leq \tilde{\Y}_n)$ converges to a constant, where the 
	supremum is taken over all possible couplings of $\tilde{\X}_n$ 
	and~$\tilde{\Y}_n$. This constant is expressed explicitly in terms of the 
	parameters of the system.
\end{abstract}

\begin{quotation}
	\msc{Primary 60E15, Secondary 60F05}
\end{quotation}

\section[Introduction]{Introduction and main results}
\label{sec:introduction}

Let $\X$ and~$\Y$ be random vectors on~$\R^n$ with respective laws $\mu$ 
and~$\nu$. We say that $\X$ is stochastically dominated by~$\Y$, and write $\X 
\preceq \Y$, if it is possible to define random vectors $\U = (U_1,\dots,U_n)$ 
and $\V = (V_1,\dots,V_n)$ on a common probability space such the laws of $\U$ 
and~$\V$ are equal to $\mu$ and~$\nu$, respectively, and $\U\leq \V$ (that is, 
$U_i\leq V_i$ for all $i\in\{1,\dots,n\}$) with probability~$1$. In this case, 
we also write $\mu \preceq \nu$. For instance, when $\X = (X_1,\dots,X_n)$ and 
$\Y = (Y_1,\dots,Y_n)$ are vectors of $n$~independent Bernoulli random 
variables with success probabilities $p_1,\dots,p_n$ and $q_1,\dots,q_n$, 
respectively, and $0< p_i\leq q_i< 1$ for $i\in\{1,\dots,n\}$, we have $\X 
\preceq \Y$.

In this paper, we consider the \emph{conditional} laws of $\X$ and~$\Y$, 
conditioned on the total number of successes being at least~$k$, or sometimes 
also equal to~$k$, for an integer~$k$. In this first section, we will state 
our main results and provide some intuition. All proofs are deferred to later 
sections.

Domination issues concerning the conditional law of Bernoulli vectors 
conditioned on having at least a certain number of successes have come up in 
the literature a number of times. In \cite{BHS} and~\cite{BM}, a simplest case 
has been considered in which $p_i=p$ and $q_i=q$ for some $p<q$. In~\cite{BM}, 
the conditional domination is used as a tool in the study of random trees.

Here we study such domination issues in great detail and generality. The 
Bernoulli vectors we consider have the property that the $p_i$ and~$q_i$ take 
only finitely many values, uniformly in the length~$n$ of the vectors. The 
question about stochastic ordering of the corresponding conditional 
distributions gives rise to a number of intriguing questions which, as it 
turns out, can actually be answered. Our main result, 
Theorem~\ref{thm:asymptstochdom}, provides a complete answer to the question 
with what maximal probability two such conditioned Bernoulli vectors can be 
ordered in any coupling, when the length of the vectors tends to infinity.

In Section~\ref{ssec:stochDomination}, we will first discuss domination issues 
for finite vectors $\X$ and~$\Y$ as above. In order to deal with domination 
issues as the length~$n$ of the vectors tends to infinity, it will be 
necessary to first discuss weak convergence of the conditional distribution of 
a single vector. Section~\ref{ssec:framework} introduces the framework for 
dealing with vectors whose lengths tend to infinity, and 
Section~\ref{ssec:weakConvergence} discusses their weak convergence. Finally, 
Section~\ref{ssec:asymptStochDomination} deals with the asymptotic domination 
issue when $n\to \infty$.

\subsection[Domination: finite case]{Stochastic domination of finite vectors}
\label{ssec:stochDomination}

As above, let $\X = (X_1,\dots,X_n)$ and $\Y = (Y_1,\dots,Y_n)$ be vectors of 
independent Bernoulli random variables with success probabilities 
$p_1,\dots,p_n$ and $q_1,\dots,q_n$, respectively, where $0< p_i\leq q_i< 1$ 
for $i\in\{1,\dots,n\}$. For an event~$A$, we shall denote by $\Law(\X|A)$ the 
conditional law of~$\X$ given~$A$. Our first proposition states that the 
conditional law of the total number of successes of~$\X$, conditioned on the 
event $\{\sum_{i=1}^n X_i \geq k\}$, is stochastically dominated by the 
conditional law of the total number of successes of~$\Y$.

\begin{proposition}\label{prop:sums}
	For all $k\in\{0,1,\dots,n\}$,
	\[
		\textstyle \Law(\sum_{i=1}^n X_i | \sum_{i=1}^n X_i \geq k)
		\preceq \Law(\sum_{i=1}^n Y_i | \sum_{i=1}^n Y_i \geq k).
	\]
\end{proposition}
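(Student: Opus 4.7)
The plan is to reformulate the assertion as a tail-ratio inequality, reduce it by a telescoping argument to the case of a single coordinate change, and resolve that case via the log-concavity of the distribution of a sum of independent Bernoulli variables.

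Write $S=\sum_{i=1}^n X_i$ and $T=\sum_{i=1}^n Y_i$. For $\ell\leq k$ the two conditional tail probabilities in the proposition both equal $1$, so the asserted stochastic ordering is equivalent to the cross-multiplied inequality
\[
  \Pr(S\geq\ell)\,\Pr(T\geq k)\leq\Pr(T\geq\ell)\,\Pr(S\geq k) \quad \text{for all integers } \ell>k.
\]
Letting $\X^{(i)}$ be a vector of $n$ independent Bernoulli variables with success probabilities $q_1,\ldots,q_i,p_{i+1},\ldots,p_n$, so that $\X^{(0)}=\X$ and $\X^{(n)}=\Y$, transitivity of this inequality (applied along the chain $\X^{(0)},\ldots,\X^{(n)}$) reduces the problem to the single-coordinate case: $\X$ and $\Y$ agree except in some coordinate $j$, where $X_j$ is Bernoulli with parameter $p$, $Y_j$ is Bernoulli with parameter $q\geq p$, and $X_i=Y_i$ otherwise.

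In that case, let $S'=\sum_{i\neq j}X_i$, independent of both $X_j$ and $Y_j$. Conditioning on $X_j$ gives $\Pr(S\geq m)=\Pr(S'\geq m)+p\,\Pr(S'=m-1)$, and the analogous identity for $T$ with $p$ replaced by $q$. Substituting these expressions into the displayed inequality, expanding the products, and cancelling common terms reduces the single-coordinate statement to
\[
  \Pr(S'=\ell-1)\,\Pr(S'\geq k)\geq\Pr(S'=k-1)\,\Pr(S'\geq\ell).
\]

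This last inequality I would derive from the classical log-concavity of the probability mass function of $S'$: the generating function $\prod_{i\neq j}(1-p_i+p_iz)$ has only real roots, so by Newton's inequalities its coefficients form a log-concave sequence. Equivalently, $r_m:=\Pr(S'=m)/\Pr(S'=m-1)$ is non-increasing in $m$, and for each integer $r\geq 0$ and $\ell\geq k$ one obtains $\prod_{i=0}^{r} r_{k+i}\geq\prod_{i=0}^{r} r_{\ell+i}$, which rearranges to $\Pr(S'=k+r)\,\Pr(S'=\ell-1)\geq\Pr(S'=\ell+r)\,\Pr(S'=k-1)$. Summing over $r\geq 0$ yields the required estimate. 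The main conceptual step is identifying the right auxiliary inequality on $S'$; once that reduction is in place, invoking log-concavity of Bernoulli sums is standard.
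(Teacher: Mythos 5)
Your proof is correct, and it takes a genuinely different route from the paper's. The paper's proof relies on Lemma~2.1, which shows by induction on~$n$ that the tail ratio $\Pr(\sum_i X_i \geq k+1)/\Pr(\sum_i X_i \geq k)$ is strictly increasing in each~$p_i$ (and strictly decreasing in~$k$); Proposition~\ref{prop:sums} then follows by writing $\Pr(\sum_i X_i\geq k+\ell)/\Pr(\sum_i X_i\geq k)$ as a telescoping product over levels $k, k+1, \dots, k+\ell$. You instead telescope over coordinates, replacing $p_j$ by $q_j$ one at a time, expand the resulting cross-product inequality for a single coordinate change, and arrive at $\Pr(S'=\ell-1)\Pr(S'\geq k)\geq\Pr(S'=k-1)\Pr(S'\geq\ell)$, which you settle by appealing to the classical log-concavity of the pmf of a Bernoulli convolution (real-rootedness of the probability generating function together with Newton's inequality). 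The two arguments are closely related at their core --- the paper's deduction $Q^{n-1}_k<Q^{n-1}_{k-1}$ inside the proof of Lemma~2.1 is exactly pmf log-concavity, obtained there as a by-product of the induction --- but the overall structure is different: you buy brevity and conceptual clarity by invoking a standard fact and reducing to the case of a single differing coordinate, whereas the paper's self-contained induction also delivers the $p$-monotonicity of the ratios, a stronger statement that is reused elsewhere in Section~2. One small remark: your ratio manipulation divides by $\Pr(S'=k-1)$, so the case $k=0$ (where $\Pr(S'=-1)=0$) should be set aside, though it is trivial since the conditioning event is then almost sure.
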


In general, the conditional law of the full vector~$\X$ is not necessarily 
stochastically dominated by the conditional law of the vector~$\Y$. For 
example, consider the case $n=2$, $p_1=p_2=q_1=p$ and $q_2=1-p$ for some 
$p<\tfrac12$, and $k=1$. We then have
\begin{align*}
	\Pr(X_1 = 1 \mid X_1+X_2 \geq 1) &= \frac{1}{2-p},\\
	\Pr(Y_1 = 1 \mid Y_1+Y_2 \geq 1) &= \frac{p}{1-(1-p)p}.
\end{align*}
Hence, if $p$ is small enough, then the conditional law of~$\X$ is not 
stochastically dominated by the conditional law of~$\Y$.

We would first like to study under which conditions we do have stochastic 
ordering of the conditional laws of $\X$ and~$\Y$. For this, it turns out to 
be very useful to look at the conditional laws of $\X$ and~$\Y$, conditioned 
on the total number of successes being \emph{exactly equal} to~$k$, for an 
integer~$k$. Note that if we condition on the total number of successes being 
exactly equal to~$k$, then the conditional law of~$\X$ is stochastically 
dominated by the conditional law of~$\Y$ if and only if the two conditional 
laws are equal. The following proposition characterizes stochastic ordering of 
the conditional laws of $\X$ and~$\Y$ in this case. First we define, for $i\in 
\{1,\dots,n\}$,
\begin{equation}\label{eqn:defbeta}
	\beta_i := \frac{p_i}{1-p_i} \frac{1-q_i}{q_i}.
\end{equation}
The $\beta_i$ will play a crucial role in the domination issue throughout the 
paper.

\begin{proposition}\label{prop:equallaws}
	The following statements are equivalent:
	\begin{itemize}
		\item[(i)] All $\beta_i$ ($i\in \{1,\dots,n\}$) are equal;
		\item[(ii)] $\Law(\X | \sum_{i=1}^n X_i = k) = \Law(\Y | \sum_{i=1}^n 
			Y_i = k)$ for all $k\in\{0,1,\dots,n\}$;
		\item[(iii)] $\Law(\X | \sum_{i=1}^n X_i = k) = \Law(\Y | \sum_{i=1}^n 
			Y_i = k)$ for some $k\in\{1,\dots,n-1\}$.
	\end{itemize}
\end{proposition}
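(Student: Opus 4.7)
The proof will revolve around an explicit formula for the conditional law $\Law(\X\mid\sum_{i=1}^n X_i = k)$. For $\vect{x}\in\{0,1\}^n$ with $\sum_i x_i = k$,
\[
	\Pr\!\Bigl(\X=\vect{x}\,\Bigm|\,\textstyle\sum_{i=1}^n X_i = k\Bigr)
	= \frac{\prod_{i=1}^n \bigl(p_i/(1-p_i)\bigr)^{x_i}}
	       {\sum_{\vect{y}:\,\sum_i y_i = k}\prod_{i=1}^n \bigl(p_i/(1-p_i)\bigr)^{y_i}},
\]
obtained by dividing the unconditional probability by $\prod_i(1-p_i)$ and normalising. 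An analogous formula holds for~$\Y$ with $p_i$ replaced by~$q_i$. Comparing the two ratios, $\Law(\X\mid\sum_i X_i=k) = \Law(\Y\mid\sum_i Y_i=k)$ if and only if the function $\vect{x}\mapsto \prod_{i=1}^n \beta_i^{x_i}$ is constant on the level set $\{\vect{x}\in\{0,1\}^n : \sum_i x_i = k\}$. This reformulation is the engine of the whole argument.

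The implication $(i)\Rightarrow(ii)$ is then immediate: if all $\beta_i$ are equal to some common value~$\beta$, then $\prod_i \beta_i^{x_i} = \beta^k$ on the level set, so the two conditional laws coincide for every~$k$. The implication $(ii)\Rightarrow(iii)$ is trivial as soon as $n\geq 2$ (the case $n=1$ being degenerate and making all three conditions collapse).

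For the non-trivial direction $(iii)\Rightarrow(i)$, I would fix a $k\in\{1,\dots,n-1\}$ for which the conditional laws agree and deduce, by the reformulation above, that $\prod_{i=1}^n \beta_i^{x_i}$ is constant on $\{\vect{x}:\sum_i x_i = k\}$. The key step is then a two-coordinate swap: for any pair $i\neq j$, since $1\leq k\leq n-1$, I can pick a vector $\vect{x}$ on this level set with $x_i=1$ and $x_j=0$ (choose the remaining $k-1$ ones among the other $n-2$ coordinates, which is possible because $0\leq k-1\leq n-2$). Swapping the values at $i$ and~$j$ produces another vector $\vect{x}'$ on the level set, and the ratio of the two products equals $\beta_i/\beta_j$. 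Constancy forces $\beta_i=\beta_j$, and since $i,j$ were arbitrary, all $\beta_i$ are equal.

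The \emph{main obstacle} is really only conceptual: recognising the right reformulation in terms of $\prod \beta_i^{x_i}$, after which the crucial role of the hypothesis $k\in\{1,\dots,n-1\}$ (rather than $k\in\{0,n\}$) becomes transparent, since it is precisely what guarantees enough room on the level set to perform the coordinate swap that isolates each pair~$\beta_i,\beta_j$.
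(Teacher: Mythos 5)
Your proposal is correct and follows essentially the same route as the paper: both rest on the observation that the ratio of the two conditional point masses is $\prod_i\beta_i^{x_i}$ up to a normalising constant, and both prove $(iii)\Rightarrow(i)$ by a two-coordinate swap on the level set $\{\sum_i x_i=k\}$, which is exactly what the restriction $k\in\{1,\dots,n-1\}$ makes possible. The paper phrases the key identity as a comparison of the cross-ratios $\Pr(\X=x\mid\cdot)/\Pr(\X=y\mid\cdot)$ and $\Pr(\Y=x\mid\cdot)/\Pr(\Y=y\mid\cdot)$, while you phrase it as constancy of $\prod_i\beta_i^{x_i}$ on the level set, but these are the same computation.
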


We will use this result to prove the next proposition, which gives a 
sufficient condition under which the conditional law of~$\X$ is stochastically 
dominated by the conditional law of~$\Y$, in the case when we condition on the 
total number of successes being at least~$k$.

\begin{proposition}\label{prop:stochdom}
	If all $\beta_i$ ($i\in \{1,\dots,n\}$) are equal, then for all 
	$k\in\{0,1,\dots,n\}$,
	\[
		\textstyle \Law(\X | \sum_{i=1}^n X_i \geq k)
		\preceq \Law(\Y | \sum_{i=1}^n Y_i \geq k).
	\]
\end{proposition}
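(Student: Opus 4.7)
The plan is to decompose both conditional laws as mixtures over the common family of level-set distributions, and then to combine stochastic domination of the mixing weights (Proposition~\ref{prop:sums}) with a monotonicity property of those level-set distributions.

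Since all $\beta_i$ are equal, Proposition~\ref{prop:equallaws} provides a common family $\mu_j := \Law(\X\mid\sum_{i=1}^n X_i = j) = \Law(\Y\mid\sum_{i=1}^n Y_i = j)$ for $j\in\{0,\dots,n\}$. Conditioning on the value of the total yields
\[
    \Law(\X\mid\textstyle\sum_i X_i\geq k) = \sum_{j=k}^n a_j\mu_j,
    \qquad
    \Law(\Y\mid\textstyle\sum_i Y_i\geq k) = \sum_{j=k}^n b_j\mu_j,
\]
where $a_j := \Pr(\sum_i X_i = j \mid \sum_i X_i \geq k)$ and $b_j := \Pr(\sum_i Y_i = j \mid \sum_i Y_i \geq k)$. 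Proposition~\ref{prop:sums} says exactly that $a\preceq b$ as distributions on $\{k,\dots,n\}$, so by Strassen's theorem there are integer-valued random variables $J\sim a$ and $J'\sim b$ on a common probability space with $J\leq J'$ almost surely.

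Next I would invoke the monotonicity lemma $\mu_j\preceq\mu_{j+1}$ for every $j<n$. Iterating one step at a time produces, for each pair $j\leq j'$, a coupling of $\mu_j$ and $\mu_{j'}$ realising $\U\leq\V$ almost surely; composing this with the coupling of $(J,J')$ above and integrating out delivers a coupling of $\Law(\X\mid\sum_i X_i\geq k)$ and $\Law(\Y\mid\sum_i Y_i\geq k)$ with the required pointwise inequality.

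The monotonicity lemma is the technical heart of the argument and is where I expect the main obstacle: it asserts that the product measure on $\{0,1\}^n$ conditioned on a level set of its sum is stochastically increasing in the level. By Strassen it is equivalent to the inequality $w(A\cap L_j)\,w(L_{j+1})\leq w(A\cap L_{j+1})\,w(L_j)$ for every up-set $A\subseteq\{0,1\}^n$, with $w(\vect{y}) := \prod_i q_i^{y_i}(1-q_i)^{1-y_i}$ and $L_m := \{\vect{y} : \sum_i y_i = m\}$. I would attack this by constructing an explicit fractional matching on the bipartite graph between $L_j$ and $L_{j+1}$ with edges along single-coordinate flips $0\mapsto 1$, and verifying the Hall marriage condition via log-concavity of the level totals $w(L_m)$, which is an instance of Newton's inequalities for elementary symmetric polynomials.
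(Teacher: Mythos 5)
Your overall plan is the same as the paper's: use Proposition~\ref{prop:equallaws} to identify the level-set conditionals of $\X$ and $\Y$, couple the totals via Proposition~\ref{prop:sums}, and then chain through a monotonicity property of the level-set conditionals. You have correctly isolated the technical heart of the argument as the monotonicity lemma $\mu_j\preceq\mu_{j+1}$. This is precisely what the paper's Lemma~\ref{lem:erik} supplies (equivalently one could cite \cite[Proposition~6.2]{JN}), and with that lemma in hand the rest of your argument goes through exactly as the paper's does.

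The gap is in your proposed proof of the monotonicity lemma itself, and it is a genuine one. You propose to establish the fractional matching between $L_j$ and $L_{j+1}$ along up-flips by ``verifying the Hall marriage condition via log-concavity of the level totals $w(L_m)$.'' This cannot be enough. The relevant Hall-type condition is that for every $S\subseteq L_j$ one has $w(S)\,w(L_{j+1}) \leq w(N(S))\,w(L_j)$, where $N(S)$ is the set of single-coordinate up-flips of members of $S$; this involves the weights $w(S)$ and $w(N(S))$ of arbitrary subsets of the level sets, not merely the totals $w(L_m)$. Newton's inequalities for the generating polynomial $\prod_i((1-p_i)+p_i z)$ give log-concavity of the one-dimensional marginal of the sum, but that marginal information does not constrain how the mass of $\mu_j$ is arranged \emph{within} $L_j$, which is exactly what the matching condition requires. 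In other words, the monotonicity $\mu_j\preceq\mu_{j+1}$ is a statement about the full $n$-dimensional conditional laws and cannot be derived from a property of the sum alone. To close the gap you need either an explicit transition kernel such as the paper's $\gamma_{j,I}$ of Lemma~\ref{lem:erik}, whose proof rests on a nontrivial combinatorial identity, or an argument genuinely exploiting the product structure of $w$ across coordinates (not just across level totals).
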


The condition in this proposition is a sufficient condition, not a necessary 
condition. For example, if $n=2$, $p_1=p_2=\tfrac12$, $q_1=\tfrac{6}{10}$ and 
$q_2=\tfrac{7}{10}$, then $\beta_1\neq \beta_2$, but we do have stochastic 
ordering for all $k\in\{0,1,2\}$.

\subsection[Asymptotic framework]{Framework for asymptotic domination}
\label{ssec:framework}

Suppose that we now extend our Bernoulli random vectors $\X$ and~$\Y$ to 
infinite sequences $X_1,X_2,\dotsc$ and $Y_1,Y_2,\dotsc$ of independent 
Bernoulli random variables, which we assume to have only finitely many 
distinct success probabilities. It then seems natural to let $\X_n$ and~$\Y_n$ 
denote the $n$-dimensional vectors $(X_1,\dots,X_n)$ and $(Y_1,\dots,Y_n)$, 
respectively, and consider the domination issue as $n\to \infty$, where we 
condition on the total number of successes being at least $k_n = \floor{\alpha 
n}$ for some fixed number $\alpha\in (0,1)$.

More precisely, with $k_n$ as above, let $\tilde\X_n$ be a random vector 
having the law $\Law(\X_n | \sum_{i=1}^n X_i \geq k_n)$, and define 
$\tilde\Y_n$ similarly. Proposition~\ref{prop:stochdom} gives a sufficient 
condition under which $\tilde \X_n$ is stochastically dominated by~$\tilde 
\Y_n$ for each $n\geq1$. If this condition is not fulfilled, however, we might 
still be able to define random vectors $\U$ and~$\V$, with the same laws as 
$\tilde \X_n$ and~$\tilde \Y_n$, on a common probability space such that the 
probability that $\U\leq \V$ is high (perhaps even~$1$). We denote by
\begin{equation}\label{eqn:sup}
	\sup \Pr(\tilde \X_n\leq \tilde \Y_n)
\end{equation}
the supremum over all possible couplings $(\U,\V)$ of $(\tilde \X_n, \tilde 
\Y_n)$ of the probability that $\U \leq \V$. We want to study the asymptotic 
behaviour of this quantity as $n\to\infty$.

As an example (and an appetizer for what is to come), consider the following 
situation. For $i\geq 1$ let the random variable~$X_i$ have success 
probability~$p$ for some $p\in (0,\tfrac12)$. For $i\geq 1$ odd or even let 
the random variable~$Y_i$ have success probability $p$ or $1-p$, respectively. 
We will prove that  $\sup \Pr(\tilde \X_n \leq \tilde \Y_n)$ converges to a 
constant as $n\to \infty$ (Theorem~\ref{thm:asymptstochdom} below). It turns 
out that there are three possible values of the limit, depending on the value 
of~$\alpha$:
\begin{itemize}
	\item[(i)] If $\alpha < p$, then $\sup\Pr(\tilde \X_n\leq \tilde \Y_n) \to 
		1$.
	\item[(ii)] If $\alpha = p$, then $\sup\Pr(\tilde \X_n\leq \tilde \Y_n) 
		\to \tfrac{3}{4}$.
	\item[(iii)] If $\alpha > p$, then $\sup\Pr(\tilde \X_n\leq \tilde \Y_n) 
		\to 0$.
\end{itemize}

In fact, to study the asymptotic domination issue, we will work in an even 
more general framework, which we shall describe now. For every $n\geq1$, 
$\X_n$ is a vector of $n$ independent Bernoulli random variables. We assume 
that this vector is organized in $M$~``blocks'', such that all Bernoulli 
variables in block~$i$ have the same success probability~$p_i$, for $i\in 
\{1,\dots,M\}$. Similarly, $\Y_n$ is a vector of $n$ independent Bernoulli 
random variables with the exact same block structure as~$\X_n$, but 
for~$\Y_n$, the success probability corresponding to block~$i$ is~$q_i$, where 
$0<p_i\leq q_i<1$ as before.

For given $n\geq1$ and $i\in \{1,\dots,M\}$, we denote by~$m_{in}$ the size of 
block~$i$, where of course $\sum_{i=1}^M m_{in} = n$. In the example above, 
there were two blocks, each containing (roughly) one half of the Bernoulli 
variables, and the size of each block was increasing with~$n$. In the general 
framework, we only assume that the fractions $m_{in}/n$ converge to some 
number $\alpha_i\in (0,1)$ as $n\to\infty$, where $\sum_{i=1}^M \alpha_i = 1$. 
Similarly, in the example above we conditioned on the total number of 
successes being at least~$k_n$, where $k_n = \floor{\alpha n}$ for some fixed 
$\alpha\in (0,1)$. In the general framework, we only assume that we are given 
a fixed sequence of integers~$k_n$ such that $0\leq k_n\leq n$ for 
all~$n\geq1$ and $k_n/n\to \alpha\in (0,1)$ as $n\to \infty$.

In this general framework, let $\tilde \X_n$ be a random vector having the 
conditional distribution of~$\X_n$, conditioned on the total number of 
successes being at least~$k_n$. Observe that given the number of successes in 
a particular block, these successes are uniformly distributed within the 
block. Hence, the distribution of~$\tilde \X_n$ is completely determined by 
the distribution of the $M$-dimensional vector describing the numbers of 
successes per block. Therefore, before we proceed to study the asymptotic 
behaviour of the quantity~\eqref{eqn:sup}, we shall first study the asymptotic 
behaviour of this $M$-dimensional vector.

\subsection{Weak convergence}
\label{ssec:weakConvergence}

Consider the general framework introduced in the previous section. We define 
$X_{in}$ as the number of successes of the vector~$\X_n$ in block~$i$ and 
write $\Sigma_n := \sum_{i=1}^M X_{in}$ for the total number of successes 
in~$\X_n$. Then $X_{in}$ has a binomial distribution with parameters $m_{in}$ 
and~$p_i$ and, for fixed~$n$, the~$X_{in}$ are independent. In this section, 
we shall study the joint convergence in distribution of the~$X_{in}$ as $n\to 
\infty$, conditioned on $\{\Sigma_n \geq k_n\}$, and also conditioned on 
$\{\Sigma_n = k_n\}$.

First we consider the case where we condition on $\{\Sigma_n = k_n\}$. We will 
prove (Lemma~\ref{lem:centres} below) that the~$X_{in}$ concentrate around the 
values $c_{in} m_{in}$, where the~$c_{in}$ are determined by the system of 
equations
\begin{equation}\label{eqn:centres}
	\left\{ \begin{aligned}
		&\frac{1-c_{in}}{c_{in}} \frac{p_i}{1-p_i}
		= \frac{1-c_{jn}}{c_{jn}} \frac{p_j}{1-p_j}
		&&& \forall i,j\in\{1,\dots,M\}; \\
		& \textstyle\sum_{i=1}^M c_{in} m_{in} = k_n.
	\end{aligned} \right.
\end{equation}
We will show in Section~\ref{sec:weakConvergence} that the 
system~\eqref{eqn:centres} has a unique solution and that
\[
	c_{in}\to c_i \qquad \text{as $n\to\infty$},
\]
for some $c_i$ strictly between $0$ and~$1$. As we shall see, each 
component~$X_{in}$ is roughly normally distributed around the central value 
$c_{in} m_{in}$, with fluctuations around this centre of the order~$\sqrt{n}$. 
Hence, the proper scaling is obtained by looking at the $M$-dimensional vector
\begin{equation}\label{eqn:defXnintro}
	\Xscaled_n := \left( \frac{X_{1n}-c_{1n} m_{1n}}{\sqrt n}, 
	\frac{X_{2n}-c_{2n} m_{2n}}{\sqrt n}, \dots, \frac{X_{Mn}-c_{Mn} 
	m_{Mn}}{\sqrt n} \right).
\end{equation}

Since we condition on $\{\Sigma_n = k_n\}$, this vector is essentially an 
$(M-1)$-dimensional vector, taking only values in the hyperplane
\[
	S_0 := \{(z_1,\dots,z_M)\in \R^M\colon z_1+\dots+z_M = 0\}.
\]
However, we want to view it as an $M$-dimensional vector, mainly because when 
we later condition on $\{\Sigma_n \geq k_n\}$, $\Xscaled_n$ will no longer be 
restricted to a hyperplane. One expects that the laws of the~$\Xscaled_n$ 
converge weakly to a distribution which concentrates on~$S_0$ and is, 
therefore, singular with respect to $M$-dimensional Lebesgue measure. To 
facilitate this, it is natural to define a measure~$\nu_0$ on the Borel sets 
of~$\R^M$ through
\begin{equation}\label{eqn:defnu0}
	\nu_0(\,\cdot\,) := \lambda_0(\,\cdot\, \cap S_0),
\end{equation}
where $\lambda_0$ denotes ($(M-1)$-dimensional) Lebesgue measure on~$S_0$, and 
to identify the weak limit of the~$\Xscaled_n$ via a density with respect 
to~$\nu_0$. The density of the weak limit is given by the function $f\colon 
\R^M\to\R$ defined by
\begin{equation}\label{eqn:deff}
	f(z) = \I_{S_0}(z) \prod_{i=1}^M \exp\left( -\frac{z_i^2}
	{2c_i(1-c_i)\alpha_i} \right).
\end{equation}

\begin{theorem}\label{thm:limitgivensuccesses}
	The laws $\Law(\Xscaled_n | \Sigma_n = k_n)$ converge weakly to the 
	measure which has density $f / \int f\,d\nu_0$ with respect to~$\nu_0$.
\end{theorem}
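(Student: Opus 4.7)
The plan is to combine an exponential-tilting trick with a multivariate local central limit theorem. The crucial observation is that the centres $c_{in}$ are defined precisely so that, if one replaces each success probability $p_i$ by $c_{in}$, the conditional law of the block-count vector on $\{\Sigma_n = k_n\}$ is unchanged. Indeed, for any configuration of block counts $(k_1,\dots,k_M)$ summing to $k_n$, the likelihood ratio between the original and tilted joint masses factors as
\[
    \prod_{i=1}^M \left(\frac{1-p_i}{1-c_{in}}\right)^{m_{in}} \beta_n^{k_n},
\]
where $\beta_n$ denotes the common value of $\frac{p_i}{1-p_i}\frac{1-c_{in}}{c_{in}}$ guaranteed by \eqref{eqn:centres}, and this quantity does not depend on the configuration. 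Hence we may assume each block-count $X_{in}$ is binomial $(m_{in},c_{in})$; but now we are conditioning on a \emph{typical} event, since by construction $k_n = \sum_i c_{in} m_{in}$ is exactly the mean of $\Sigma_n$ under the tilted law.

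Next I would apply the local CLT to each tilted binomial factor and to the sum. For $z=(z_1,\dots,z_M)$ in a compact subset of $S_0$ and $k_i = c_{in} m_{in} + z_i\sqrt n$ integer, the local CLT combined with the convergences $m_{in}/n \to \alpha_i$ and $c_{in}\to c_i$ yields
\[
    \Pr(\tilde X_{in} = k_i) = \frac{1+o(1)}{\sqrt{2\pi\alpha_i c_i(1-c_i)\,n}}\exp\left(-\frac{z_i^2}{2\alpha_i c_i(1-c_i)}\right),
\]
while a local CLT for the sum gives $\Pr(\tilde\Sigma_n = k_n) \sim (2\pi n\,\sigma_\Sigma^2)^{-1/2}$ with $\sigma_\Sigma^2 = \sum_i \alpha_i c_i(1-c_i)$. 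Dividing, and recalling that the support of $\Xscaled_n$ inside $S_0$ is a lattice of mesh $n^{-1/2}$ whose fundamental cell has $\nu_0$-volume $\sqrt M\, n^{-(M-1)/2}$, I obtain the pointwise asymptotic
\[
    \Pr(\Xscaled_n = z \mid \Sigma_n = k_n) \sim K\, n^{-(M-1)/2}\, f(z),
\]
uniformly on compact sets, for an explicit constant $K$.

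Finally I would upgrade this density asymptotic to weak convergence by testing against arbitrary bounded continuous $g\colon\R^M\to\R$. The conditional expectation $\Ex[g(\Xscaled_n)\mid\Sigma_n = k_n]$ is a Riemann-type sum over the lattice which, thanks to the above, converges on any bounded region to $(K/\sqrt M)\int g f\, d\nu_0$. Taking $g\equiv 1$ identifies $K/\sqrt M = 1/\int f\, d\nu_0$. The main technical obstacle is the contribution from $|z|$ large: one must dominate the conditional mass globally, not merely in the CLT window, by a Gaussian tail in order to apply dominated convergence to the Riemann sum. This is achieved by a standard moderate-deviations upper bound of the form $\Pr(\tilde X_{in} = k_i) \leq (C/\sqrt n)\exp(-c\,z_i^2)$ valid for all $k_i$, together with the lower bound $\Pr(\tilde\Sigma_n = k_n) \geq c/\sqrt n$ from the local CLT; combining these yields a uniformly Gaussian tail on the conditional law, which gives tightness and justifies passing to the limit in the Riemann sum.
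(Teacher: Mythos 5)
Your proposal is correct, and it reaches the result by a genuinely different route than the paper.

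The paper sidesteps the local CLT entirely. It works with \emph{ratios} of unconditioned probabilities, $\Pr(\sqrt{n}\,\Xscaled_n = r^z_n)/\Pr(\sqrt{n}\,\Xscaled_n = 0)$, noting that the conditioning on $\{\Sigma_n = k_n\}$ cancels in the ratio (equation~\eqref{eqn:ratios}). The asymptotics of these ratios and the dominating bound needed for integrability are then extracted directly from Stirling's formula in Lemma~\ref{lem:binomialratios}. A final Scheff\'e-type argument, adapted to the fact that the limit lives on the hyperplane $S_0$ and is singular with respect to $\lambda$, closes the proof. Your approach instead begins with the exponential-tilting observation — which is a clean and correct way to see that, under $\{\Sigma_n = k_n\}$, the conditional block-count law depends on the $p_i$ only through the common value $A_n$ of \eqref{eqn:An}, so one may just as well regard $X_{in}$ as $\mathrm{Bin}(m_{in},c_{in})$. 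The conditioning event is then \emph{typical} rather than a moderate deviation, so a triangular-array local CLT can be applied to the tilted factors and their sum directly, and the pointwise asymptotic for the conditional pmf follows by Bayes. What you gain is conceptual transparency: the role of the tilting parameter $A_n$ (or $\beta_n$ in your notation) is laid bare, whereas the paper's $A_n^{r}$ factor in Lemma~\ref{lem:binomialratios} does the same job somewhat more opaquely. What the paper gains is economy: Lemma~\ref{lem:binomialratios} also delivers, in a single uniform statement, both the pointwise limit and the dominating bound used in the harder Theorem~\ref{thm:limitabovemean}, where one must control all $\ell$ in a range $0 \le \ell \le a_n$ simultaneously; your route would need a uniform-in-$n$ local CLT and a separate uniform-in-$\ell$ moderate-deviations bound for that extension.

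Two small points to be careful about if you flesh this out. First, the $c_{in}$ depend on $n$, so the local CLTs you invoke are of triangular-array type for sums of independent (not identically distributed) Bernoulli variables; this is classical but should be stated. Second, the paper actually defines $\Xscaled_n$ using integer centres $x_{in}$ with $\abs{x_{in}-c_{in}m_{in}}<1$ and $\sum x_{in}=k_n$, rather than the possibly non-integer $c_{in}m_{in}$; this does not affect the weak limit but matters for the bookkeeping of the lattice on which the conditional law is supported. Your claimed dominating bound $\Pr(\tilde X_{in}=k_i)\le (C/\sqrt{n})e^{-c z_i^2}$ is essentially right (it is what the $\exp(-2(k-np)^2/n)$ Chernoff tail plus unimodality gives), and plays exactly the role of the second part of the paper's Lemma~\ref{lem:binomialratios}.
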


We now turn to the case where we condition on $\{\Sigma_n\geq k_n\}$. Our 
strategy will be to first study the case where we condition on the event 
$\{\Sigma_n = k_n+\ell\}$, for $\ell\geq0$, and then sum over~$\ell$. We will 
calculate the relevant range of~$\ell$ to sum over. In particular, we will 
show that for large enough~$\ell$ the probability $\Pr(\Sigma_n = k_n+\ell)$ 
is so small, that these~$\ell$ do not have a significant effect on the 
conditional distribution of~$\Xscaled_n$. For $k_n$ sufficiently larger 
than~$\Ex(\Sigma_n)$, only~$\ell$ of order $o(\sqrt n)$ are relevant, which 
leads to the following result:

\begin{theorem}\label{thm:limitabovemean}
	If $\alpha > \sum_{i=1}^M p_i\alpha_i$ or, more generally, $(k_n - 
	\Ex(\Sigma_n)) / \sqrt{n} \to \infty$, then the laws $\Law(\Xscaled_n | 
	\Sigma_n \geq k_n)$ also converge weakly to the measure which has density 
	$f / \int f\,d\nu_0$ with respect to~$\nu_0$.
\end{theorem}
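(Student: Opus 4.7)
The plan is to follow the route suggested just before the statement: decompose the conditioning event $\{\Sigma_n\geq k_n\}$ by the exact value of the total, and reduce everything to Theorem~\ref{thm:limitgivensuccesses}. Let $\mu$ denote the limit measure appearing in that theorem, fix a bounded continuous test function $\phi\colon\R^M\to\R$, and set $w_n(\ell):=\Pr(\Sigma_n=k_n+\ell\mid\Sigma_n\geq k_n)$. Then
\[
	\Ex\bigl[\phi(\Xscaled_n)\mid\Sigma_n\geq k_n\bigr]
	=\sum_{\ell\geq 0} w_n(\ell)\,\Ex\bigl[\phi(\Xscaled_n)\mid\Sigma_n=k_n+\ell\bigr].
\]
The task is to exhibit a threshold $L_n\to\infty$ with $L_n=o(\sqrt n)$ such that (i) the inner conditional expectations with $\ell\leq L_n$ are uniformly close to $\int\phi\,d\mu$, and (ii) the weights $w_n(\ell)$ with $\ell>L_n$ contribute negligibly to the sum.

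\textbf{Inner conditional expectations.} For any sequence $\ell_n$ with $\ell_n/\sqrt n\to 0$, Theorem~\ref{thm:limitgivensuccesses} applied with $k_n+\ell_n$ in place of $k_n$ gives weak convergence to $\mu$ of $\Law(\Xscaled_n^{(\ell_n)}\mid\Sigma_n=k_n+\ell_n)$, where $\Xscaled_n^{(\ell_n)}$ uses the centres $c_{in}^{(\ell_n)}$ solving~\eqref{eqn:centres} with $k_n+\ell_n$. The system~\eqref{eqn:centres} depends smoothly on its right-hand side, so $c_{in}^{(\ell_n)}-c_{in}=O(\ell_n/n)$; consequently the deterministic shift between $\Xscaled_n$ and $\Xscaled_n^{(\ell_n)}$ is $O(\ell_n/\sqrt n)=o(1)$ and the limit is unaffected. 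A standard subsequence argument upgrades this to uniformity:
\[
	\sup_{0\leq\ell\leq L_n}\Bigl|\Ex\bigl[\phi(\Xscaled_n)\mid\Sigma_n=k_n+\ell\bigr]-\textstyle\int\phi\,d\mu\Bigr|\longrightarrow 0
	\qquad\text{for every }L_n=o(\sqrt n).
\]

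\textbf{Tail of the weights, and conclusion.} Since $\Sigma_n$ is a sum of independent Bernoullis, its probability mass function is log-concave (convolution of log-concave sequences), so the ratios $\rho_n(\ell):=\Pr(\Sigma_n=k_n+\ell+1)/\Pr(\Sigma_n=k_n+\ell)$ are non-increasing in~$\ell$. Since $k_n$ eventually lies to the right of the mode, $r_n:=\rho_n(0)<1$, and telescoping gives
\[
	\Pr(\Sigma_n=k_n+\ell)\leq r_n^{\ell}\,\Pr(\Sigma_n=k_n),\qquad
	\Pr(\Sigma_n>k_n+L\mid\Sigma_n\geq k_n)\leq \frac{r_n^{L+1}}{1-r_n}.
\]
An exponential-tilting / local CLT computation identifies $-\log r_n$ with the tilting parameter $\theta_n$ that shifts $\Ex\Sigma_n$ to $k_n$, and the hypothesis $(k_n-\Ex\Sigma_n)/\sqrt n\to\infty$ translates into $\theta_n\sqrt n\to\infty$. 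One may therefore pick $L_n$ with $L_n=o(\sqrt n)$ and $\theta_n L_n\to\infty$, whence $\Pr(\Sigma_n>k_n+L_n\mid\Sigma_n\geq k_n)\to 0$. Combining the two bounds,
\[
	\Bigl|\Ex\bigl[\phi(\Xscaled_n)\mid\Sigma_n\geq k_n\bigr]-\textstyle\int\phi\,d\mu\Bigr|
	\leq \sup_{\ell\leq L_n}|\cdots|+2\|\phi\|_\infty\,\Pr(\Sigma_n>k_n+L_n\mid\Sigma_n\geq k_n),
\]
so both terms vanish and weak convergence follows by the Portmanteau theorem.

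\textbf{Main obstacle.} The delicate point is the tail estimate in the borderline regime where $(k_n-\Ex\Sigma_n)/\sqrt n$ diverges only slowly: there $r_n\to 1$ and one must track the geometric rate $1-r_n\asymp\theta_n$ simultaneously with the window $L_n$. Log-concavity supplies the cheap monotonicity, but extracting the sharp asymptotic $1-r_n\asymp\theta_n$ requires a genuine tilt/saddle-point estimate on $\Pr(\Sigma_n=k_n)$ versus $\Pr(\Sigma_n=k_n+1)$. The uniformity in the first step is routine provided the proof of Theorem~\ref{thm:limitgivensuccesses} is stable under the perturbation $k_n\leadsto k_n+\ell_n$, which should be a direct inspection of that proof's quantitative ingredients.
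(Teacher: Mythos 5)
Your overall decomposition is the same as the paper's: split on the exact value of $\Sigma_n$, identify an $o(\sqrt n)$ window of relevant $\ell$, show the tail is negligible, and reduce to Theorem~\ref{thm:limitgivensuccesses}. For the inner part your route genuinely differs and is both correct and cleaner than what the paper does. The paper works at the density level, introducing a coarse-graining over sheared cubes of side $(2a_n+1)/\sqrt n$ and proving a local limit theorem uniformly over $0\leq\ell\leq a_n$ via the quantitative uniformity built into Lemma~\ref{lem:binomialratios} and the shifted-centre expansion of Lemma~\ref{lem:shiftedcentres}. You instead test against a bounded continuous $\phi$ and absorb the offset into a modified sequence $k'_n$ to which Theorem~\ref{thm:limitgivensuccesses} applies directly; since the deterministic recentring between the two scaled vectors is $O(\ell_n/\sqrt n)=o(1)$, a subsequence argument upgrades pointwise convergence to the required uniformity in $\ell\leq L_n$. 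That is a legitimate and substantial simplification, bought at the cost of giving up the paper's stronger density-level statement, which is all that weak convergence requires anyway.

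The genuine gap is the tail estimate, and it is a bit wider than you acknowledge. First, the geometric-series bound $\Pr(\Sigma_n>k_n+L\mid\Sigma_n\geq k_n)\leq r_n^{L+1}/(1-r_n)$, obtained by comparing against $\Pr(\Sigma_n=k_n)$, carries a factor $1/(1-r_n)$ which under the tilt asymptotic $1-r_n\asymp(k_n-\Ex\Sigma_n)/n$ is of order $\sqrt n/\delta_n$ with $\delta_n:=(k_n-\Ex\Sigma_n)/\sqrt n$. To kill it you would need $r_n^{L_n}\lesssim\delta_n/\sqrt n$ with $L_n=o(\sqrt n)$, which fails whenever $\delta_n$ diverges slowly (e.g.\ $\delta_n\sim\log n$). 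Log-concavity in fact yields the sharper conditional bound $\Pr(\Sigma_n\geq k_n+L\mid\Sigma_n\geq k_n)\leq r_n^L$ (one checks $\Pr(\Sigma_n\geq k+1)/\Pr(\Sigma_n\geq k)\leq\Pr(\Sigma_n=k+1)/\Pr(\Sigma_n=k)$ directly from the geometric tail domination), but you do not note this. Second, and more seriously, you still owe the quantitative input $1-r_n\gtrsim(k_n-\Ex\Sigma_n)/n$: as you say yourself, this is a genuine local-CLT/saddle-point estimate, and without it the argument does not close. The paper avoids both difficulties at once through Lemma~\ref{lem:rangeofell}, which bounds $\Pr(\Sigma_n\geq k_n+2Ms)/\Pr(\Sigma_n\geq k_n)$ by $M\exp\bigl(-(k_n-\Ex\Sigma_n+Ms)s/(Mn)\bigr)$ via a block-by-block shift, giving the needed ratio bound with prefactor $M$ and with no appeal to a one-step PMF ratio at all; that lemma is precisely the missing ingredient in your sketch.
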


Finally, we consider the case where we condition on $\{\Sigma_n\geq k_n\}$ 
with $k_n$ below or around $\Ex(\Sigma_n)$, that is, when $(k_n - 
\Ex(\Sigma_n)) / \sqrt{n}\to K\in [-\infty, \infty)$. An essential difference 
compared to the situation in Theorem~\ref{thm:limitabovemean}, is that the 
probabilities of the events $\{\Sigma_n \geq k_n\}$ do not converge to~$0$ in 
this case, but to a strictly positive constant. In this situation, the right 
vector to look at is the $M$-dimensional vector
\[
	\Xscaled^p_n := \left( \frac{X_{1n}-p_1 m_{1n}}{\sqrt n}, \frac{X_{2n}-p_2 
	m_{2n}}{\sqrt n}, \dots, \frac{X_{Mn}-p_M m_{Mn}}{\sqrt n} \right).
\]
It follows from standard arguments that the unconditional laws 
of~$\Xscaled^p_n$ converge weakly to a multivariate normal distribution with 
density $h / \int h d\lambda$ with respect to $M$-dimensional Lebesgue 
measure~$\lambda$, where $h\colon \R^M\to\R$ is given by
\begin{equation}\label{eqn:defh}
	h(z) =  \prod_{i=1}^M \exp\left( -\frac{z_i^2} {2p_i(1-p_i)\alpha_i} 
	\right).
\end{equation}
If $k_n$ stays sufficiently smaller than $\Ex(\Sigma_n)$, that is, when 
$K=-\infty$, then the effect of conditioning vanishes in the limit, and the 
conditional laws of~$\Xscaled^p_n$ given $\{\Sigma_n\geq k_n\}$ converge 
weakly to the same limit as the unconditional laws of~$\Xscaled^p_n$. In 
general, if $K\in [-\infty, \infty)$, the conditional laws of~$\Xscaled^p_n$ 
given $\{\Sigma_n\geq k_n\}$ converge weakly to the measure which has, up to a 
normalizing constant, density~$h$ restricted to the half-space
\begin{equation}\label{eqn:defHK}
	H_K := \{(z_1,\dots,z_M) \in \R^M\colon z_1+\dots+z_M \geq K\}.
\end{equation}

\begin{theorem}\label{thm:limitaroundmean}
	If $(k_n-\Ex(\Sigma_n))/\sqrt n\to K$ for some $K\in[-\infty,\infty)$, 
	then the laws $\Law(\Xscaled^p_n | \Sigma_n \geq k_n)$ converge weakly to 
	the measure which has density $h \I_{H_K} / \int h \I_{H_K}\, d\lambda$ 
	with respect to~$\lambda$.
\end{theorem}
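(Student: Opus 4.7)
The plan is to deduce the theorem from the unconditional multivariate central limit theorem for $\Xscaled^p_n$, combined with a sandwich argument that handles the moving conditioning threshold.

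First, since the coordinates $X_{1n},\dots,X_{Mn}$ are independent and each $X_{in}$ is binomial with parameters $m_{in}$ and~$p_i$ with $m_{in}/n\to\alpha_i$, the classical CLT applied to each coordinate yields that the unconditional laws of $\Xscaled^p_n$ converge weakly to the product Gaussian measure~$\mu$ on~$\R^M$ with density $h/N$ with respect to Lebesgue measure~$\lambda$, where $N := \int h\,d\lambda$. I then rewrite the conditioning event as $\{\Sigma_n \geq k_n\} = \{\Xscaled^p_n \in H_{K_n}\}$ with $K_n := (k_n - \Ex(\Sigma_n))/\sqrt{n}$, noting that $K_n\to K$ by hypothesis. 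It suffices to show that, for every bounded continuous $g\colon\R^M\to\R$, both $\Ex[g(\Xscaled^p_n)\,\I_{H_{K_n}}(\Xscaled^p_n)]$ and $\Pr(\Xscaled^p_n\in H_{K_n})$ converge to the corresponding integrals against $h\,\I_{H_K}/N$; the ratio will then give the claimed limit, with the normalising constant~$N$ cancelling.

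Assume first that $K\in\R$. The key step is a sandwich by fixed half-spaces: for any $\eps>0$, since $K_n\to K$, one has $H_{K+\eps}\subseteq H_{K_n}\subseteq H_{K-\eps}$ for all sufficiently large~$n$. Because $\mu$ is absolutely continuous with respect to~$\lambda$, the boundary hyperplanes $\{z_1+\dots+z_M=K\pm\eps\}$ carry no $\mu$-mass, so $g\,\I_{H_{K\pm\eps}}$ is bounded and continuous $\mu$-almost everywhere. Applying the Portmanteau theorem to these two bounded $\mu$-a.e.\ continuous functionals, and combining with the sandwich (for $g\geq 0$; the general case follows by splitting $g=g^+-g^-$), shows that both $\liminf_n$ and $\limsup_n$ of $\Ex[g(\Xscaled^p_n)\,\I_{H_{K_n}}(\Xscaled^p_n)]$ lie between $\int g\,h\,\I_{H_{K+\eps}}\,d\lambda/N$ and $\int g\,h\,\I_{H_{K-\eps}}\,d\lambda/N$. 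Letting $\eps\downarrow 0$, dominated convergence collapses both bounds to $\int g\,h\,\I_{H_K}\,d\lambda/N$, giving the numerator limit; taking $g\equiv 1$ yields the denominator limit $\int h\,\I_{H_K}\,d\lambda/N$, which is strictly positive because $\sum_i Z_i$ under~$\mu$ is a non-degenerate centred Gaussian.

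The main obstacle is precisely that the conditioning set $H_{K_n}$ varies with~$n$, so the Portmanteau theorem cannot be applied directly; the sandwich by the fixed sets $H_{K\pm\eps}$, combined with the absolute continuity of~$\mu$, bridges this gap. The remaining case $K=-\infty$ is handled by a one-sided sandwich: for any fixed $K_0\in\R$ one has $H_{K_0}\subseteq H_{K_n}$ eventually, so $\liminf_n\Pr(\Xscaled^p_n\in H_{K_n})\geq\mu(H_{K_0})$, and letting $K_0\to-\infty$ forces $\Pr(\Xscaled^p_n\in H_{K_n})\to 1$; the conditioning then becomes asymptotically trivial and the conclusion reduces to the unconditional CLT, consistent with the convention $H_{-\infty}=\R^M$. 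Unlike Theorems~\ref{thm:limitgivensuccesses} and~\ref{thm:limitabovemean}, no local CLT estimate or summation over the excess~$\ell$ is needed here, because $\Pr(\Sigma_n\geq k_n)$ remains bounded away from zero throughout this regime.
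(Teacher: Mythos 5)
Your proposal is correct and follows essentially the same route as the paper: reduce to the unconditional CLT for $\Xscaled^p_n$, rewrite the conditioning event as $\{\Xscaled^p_n\in H_{K_n}\}$, and use the absolute continuity of the limiting Gaussian (so that half-space boundaries are null sets) to pass the limit through the moving half-space via a Portmanteau-type argument. The paper phrases this with rectangles and $\lambda$-continuity sets while you use bounded continuous test functions and an explicit $H_{K\pm\eps}$ sandwich, and you spell out the $K=-\infty$ case which the paper leaves implicit, but the underlying idea is identical.
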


\begin{remark}
	If $(k_n-\Ex(\Sigma_n))/\sqrt n$ does not converge as $n\to\infty$ and 
	does not diverge to either $\infty$ or~$-\infty$, then the laws 
	$\Law(\Xscaled^p_n | \Sigma_n \geq k_n)$ do not converge weakly either. 
	This follows from our results above by considering limits along different 
	subsequences of the~$k_n$.
\end{remark}

\subsection[Asymptotic domination]{Asymptotic stochastic domination}
\label{ssec:asymptStochDomination}

Consider again the general framework for vectors $\X_n$ and~$\Y_n$ introduced 
in Section~\ref{ssec:framework}. Recall that we write $\tilde\X_n$ for a 
random vector having the conditional distribution of the vector~$\X_n$, given 
that the total number of successes is at least~$k_n$. For $n\geq 1$ and $i\in 
\{1,\dots,M\}$, we let $\tilde X_{in}$ denote the number of successes 
of~$\tilde \X_n$ in block~$i$. We define $\tilde \Y_n$ and~$\tilde Y_{in}$ 
analogously. We want to study the asymptotic behaviour as $n\to\infty$ of the 
quantity
\[
	\sup \Pr(\tilde \X_n \leq \tilde \Y_n),
\]
where the supremum is taken over all possible couplings of $\tilde \X_n$ 
and~$\tilde \Y_n$.

Define $\beta_i$ for $i\in \{1,\dots,M\}$ as in~\eqref{eqn:defbeta}. As a 
first observation, note that if all~$\beta_i$ are equal, then by 
Proposition~\ref{prop:stochdom} we have $\sup \Pr(\tilde\X_n \leq \tilde\Y_n) 
= 1$ for every $n\geq 1$. Otherwise, under certain conditions on the 
sequence~$k_n$, $\sup \Pr(\tilde\X_n \leq \tilde\Y_n)$ will converge to a 
constant as $n\to\infty$, as we shall prove.

The intuitive picture behind this is as follows. Without conditioning, $\X_n 
\preceq \Y_n$ for every $n\geq 1$. Now, as long as~$k_n$ stays significantly 
smaller than~$\Ex(\Sigma_n)$, the effect of conditioning will vanish in the 
limit, and hence we can expect that $\sup \Pr(\tilde\X_n \leq \tilde\Y_n)\to 
1$ as $n\to\infty$. Suppose now that we start making the~$k_n$ larger. This 
will increase the number of successes~$\tilde X_{in}$ of the 
vector~$\tilde\X_n$ in each block~$i$, but as long as~$k_n$ stays below the 
expected total number of successes of~$\Y_n$, increasing~$k_n$ will not change 
the numbers of successes per block significantly for the vector~$\tilde\Y_n$.

At some point, when~$k_n$ becomes large enough, there will be a block~$i$ such 
that $\tilde X_{in}$ becomes roughly equal to~$\tilde Y_{in}$. We shall  see 
that this happens for~$k_n$ ``around'' the value~$\hat k_n$ defined by
\[
	\hat k_n := \sum_{i=1}^M \frac{p_i m_{in}}{p_i + \beta_{\max} (1-p_i)},
\]
where $\beta_{\max} := \max\{\beta_1,\dots,\beta_M\}$. Therefore, the 
sequence~$\hat k_n$ will play a key role in our main result. What will happen 
is that as long as~$k_n$ stays significantly smaller than~$\hat k_n$, $\tilde 
X_{in}$ stays significantly smaller than~$\tilde Y_{in}$ for each block~$i$, 
and hence $\sup \Pr(\tilde\X_n \leq \tilde\Y_n)\to 1$ as $n\to\infty$. For 
$k_n$ around~$\hat k_n$ there is a ``critical window'' in which interesting 
things occur. Namely, when $(k_n - \hat k_n) / \sqrt{n}$ converges to a finite 
constant~$K$, $\sup\Pr(\tilde \X_n \leq \tilde \Y_n)$ converges to a 
constant~$P_K$ which is strictly between $0$ and~$1$. Finally, when $k_n$ is 
sufficiently larger than~$\hat k_n$, there will always be a block~$i$ such 
that $\tilde X_{in}$ is significantly larger than~$\tilde Y_{in}$. Hence, 
$\sup\Pr(\tilde\X_n \leq \tilde\Y_n)\to 0$ in this case.

Before we state our main theorem which makes this picture precise, let us 
first define the non-trivial constant~$P_K$ which occurs as the limit of 
$\sup\Pr(\tilde \X_n \leq \tilde \Y_n)$ when~$k_n$ is in the critical window. 
To this end, let
\[
	I := \{i\in \{1,\dots,M\}\colon \beta_i = \beta_{\max}\},
\]
and define positive numbers $a$, $b$ and~$c$ by
\begin{subequations}\label{eqn:defabc}
\begin{align}
	a^2 &= \sum_{i\in I} \frac{\beta_{\max} p_i (1-p_i) \alpha_i}{(p_i + 
	\beta_{\max} (1-p_i))^2} = \sum_{i\in I} q_i (1-q_i) \alpha_i; 
	\label{eqn:defa} \\
	b^2 &= \sum_{i\notin I} \frac{\beta_{\max} p_i (1-p_i) \alpha_i}{(p_i + 
	\beta_{\max} (1-p_i))^2}; \label{eqn:defb} \\
	c^2 &= a^2 + b^2. \label{eqn:defc}
\end{align}
\end{subequations}
As we shall see later, these numbers will come up as variances of certain 
normal distributions. Let $\Phi\colon \R\to (0,1)$ denote the distribution 
function of the standard normal distribution. For $K\in\R$, define $P_K$ by
\begin{equation}\label{eqn:defPK}
	P_K = \begin{cases}
		\rule[-5ex]{0pt}{0pt}\displaystyle
		1 - \int_{-\infty}^{\frac{c-b}{ac} K} \frac{e^{-z^2/2}}{\sqrt{2\pi}} 
		\frac{\Phi\bigl( \frac{K-az}{b} \bigr) - \Phi\bigl( \frac{K}{c} 
		\bigr)}{1-\Phi\bigl( \frac{K}{c} \bigr)} \, dz
		& \text{if $\alpha = \sum_{i=1}^M p_i \alpha_i$}, \\
		\displaystyle
		\Phi\left( \frac{bK}{ac} - \frac{1}{a} R_K \right) + \Phi\left( - 
		\frac{K}{a} + \frac{b}{ac} R_K \right)
		& \text{if $\alpha > \sum_{i=1}^M p_i \alpha_i$}.
	\end{cases}
\end{equation}
where $R_K = \sqrt{K^2 + c^2 \log (c^2/b^2)}$. It will be made clear in 
Section~\ref{sect:asymptStochDomination} where these formulas for~$P_K$ come 
from. We will show that $P_K$ is strictly between $0$ and~$1$. In fact, it is 
possible to show that both expressions for~$P_K$ are strictly decreasing 
in~$K$ from $1$ to~$0$, but we omit the (somewhat lengthy) derivation of this 
fact here.

\begin{theorem}\label{thm:asymptstochdom}
	If all $\beta_i$ ($i\in \{1,\dots,M\}$) are equal, then we have that $\sup 
	\Pr(\tilde \X_n \leq \tilde \Y_n) = 1$ for every $n\geq 1$. Otherwise, the 
	following holds:
	\begin{itemize}
		\item[(i)] If $(k_n - \hat k_n)/\sqrt{n}\to -\infty$, then $\sup 
			\Pr(\tilde \X_n \leq \tilde \Y_n)\to 1$.
		\item[(ii)] If $(k_n - \hat k_n)/\sqrt{n}\to K$ for some $K\in\R$, 
			then $\sup \Pr(\tilde \X_n \leq \tilde \Y_n)\to P_K$.
		\item[(iii)] If $(k_n - \hat k_n)/\sqrt{n}\to \infty$, then $\sup 
			\Pr(\tilde \X_n \leq \tilde \Y_n)\to 0$.
	\end{itemize}
\end{theorem}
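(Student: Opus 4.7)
My plan starts with a reduction from the full $n$-dimensional coupling problem to an $M$-dimensional one for block counts, by proving
\[
\sup \Pr(\tilde\X_n \leq \tilde\Y_n) = \sup \Pr\bigl(\tilde X_{in} \leq \tilde Y_{in} \text{ for all } i\in\{1,\ldots,M\}\bigr).
\]
The direction $\leq$ is immediate since $\tilde\X_n \leq \tilde\Y_n$ forces $\tilde X_{in} \leq \tilde Y_{in}$ in every block. For $\geq$, I exploit within-block uniformity: given a coupling of the block counts with $\tilde X_{in} \leq \tilde Y_{in}$, place $\tilde\Y$'s $\tilde Y_{in}$ successes uniformly at random within block $i$ and take $\tilde\X$'s successes to be a uniform random size-$\tilde X_{in}$ subset of them; both marginals are preserved by symmetry. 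The trivial case (all $\beta_i$ equal) then follows from Proposition \ref{prop:stochdom}, so henceforth assume that not all $\beta_i$ are equal.

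Next I analyze the asymptotic centers. By Lemma \ref{lem:centres}, $\tilde X_{in}$ concentrates near $c_{in}^{\X} m_{in}$, where $c_{in}^{\X} = p_i/(p_i + \gamma_n(1-p_i))$ and $\gamma_n$ is determined by $\sum_i c_{in}^{\X} m_{in} = k_n$. Direct substitution shows that $\gamma_n = \beta_{\max}$ corresponds exactly to $k_n = \hat k_n$, at which $c_{in}^{\X} = q_i$ for $i \in I$ and $c_{in}^{\X} < q_i$ by a fixed positive constant for $i \notin I$. Implicit differentiation then yields $(c_{in}^{\X} - q_i)m_{in} = \Theta(k_n - \hat k_n)$ for $i \in I$ in the critical window. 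This already settles regimes (i) and (iii). In regime (i), every block has its $\tilde\X$-center below its $\tilde\Y$-center by $\omega(\sqrt n)$; since both marginals fluctuate on scale $\sqrt n$ by the weak convergence results of Section \ref{ssec:weakConvergence}, an \emph{independent} coupling achieves $\tilde X_{in} \leq \tilde Y_{in}$ simultaneously in every block with probability $\to 1$. In regime (iii), some block $i^* \in I$ has its $\tilde\X$-center above $q_{i^*} m_{i^*n}$ by $\omega(\sqrt n)$; using Strassen's identity $\sup \Pr(U \leq V) = 1 - \max_{B\uparrow}(\mu_U(B) - \mu_V(B))$ and choosing $B = \{x : x_{i^*} \geq t_n\}$ with $t_n$ midway between the two centers forces $\sup \Pr \to 0$.

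The bulk of the work is the critical window $(k_n - \hat k_n)/\sqrt n \to K \in \R$. Since $\hat k_n < \sum q_i m_{in}$ by a positive fraction of $n$, we have $(k_n - \Ex\Sigma_n^{\Y})/\sqrt n \to -\infty$, so Theorem \ref{thm:limitaroundmean} applied to $\tilde\Y_n$ yields a product-Gaussian limit for the rescaled block counts. For $\tilde\X_n$ there are two sub-cases matching the branches of \eqref{eqn:defPK}: if $\beta_{\max} < 1$ (equivalently $\alpha > \sum p_i \alpha_i$) then Theorem \ref{thm:limitabovemean} applies and gives the singular Gaussian limit on $S_0$; if $\beta_{\max} = 1$ (equivalently $\alpha = \sum p_i \alpha_i$) then Theorem \ref{thm:limitaroundmean} applies and gives the truncated Gaussian on $H_K$. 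Blocks $i \notin I$ have center gaps of order $n$ and so can be coupled with probability $\to 1$, reducing the analysis to the $|I|$-dimensional coupling over blocks in $I$. For the limiting Gaussian marginals, I compute $\sup \Pr$ via Strassen's formula by identifying the optimal upward-closed $B$; by Gaussian symmetry the optimum should be a half-space determined by a specific linear functional of the block counts, and the constants $a,b,c$ of \eqref{eqn:defabc} emerge as the variances of the relevant linear combinations. Carrying out the resulting 1- or 2-dimensional Gaussian integrals yields the explicit formulas for $P_K$.

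The hard part is this final step. First, interchanging the coupling supremum with the weak limit is not automatic, but should go through for our absolutely continuous Gaussian limits by approximating upward-closed sets by finite unions of rectangles and using that the boundaries have measure zero. Second, identifying the optimal $B$ is a nontrivial explicit optimization: in the $\beta_{\max} < 1$ sub-case, the singular support of the $\tilde\X$-limit on $S_0$ constrains $B$ via particular linear combinations of $(\sum_{i \in I} z_i, \sum_{i \notin I} z_i)$, producing the two-term $\Phi$-formula involving $R_K$; in the $\beta_{\max} = 1$ sub-case, the $H_K$-truncation of the $\tilde\X$-limit produces the integral form. Verifying that the identified $B$ is indeed the Strassen-optimum (rather than a competitor) will require a convexity or Lagrangian argument on the signed measure $\mu_U - \mu_V$ restricted to upward-closed sets.
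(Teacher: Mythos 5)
Your opening reduction from the full vector coupling to the block-count coupling is valid (within-block symmetry does preserve both marginals), and your analysis of the centers via $\gamma_n = \beta_{\max}$ and Remark~\ref{remark:hatalpha} is on the right track. But there are two genuine gaps, one of which is the heart of the problem.

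\textbf{The main gap is in the critical window.} After discarding the blocks $i\notin I$, you are left with a $\car{I}$-dimensional coupling problem, and you propose to solve it by directly identifying the optimal upward-closed set $B$ in Strassen's formula for the limiting Gaussian measures. You yourself flag this as ``the hard part,'' and indeed this optimization over upward-closed sets of $\R^{\car I}$ is not carried out and is not obviously tractable: the optimal $B$ for a singular Gaussian on $S_0$ versus a truncated Gaussian on $H_K$ is not a half-space in any evident coordinate, and your proposed ``convexity or Lagrangian argument'' is a placeholder, not a proof. The paper sidesteps this entirely via a crucial observation that you missed: within $I$, \emph{all the $\beta_i$ are equal by definition of $I$}. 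Consequently Proposition~\ref{prop:equallaws} and Lemma~\ref{lem:erik} apply to the subvectors $\tilde\X_{In},\tilde\Y_{In}$, giving
\[
\sup\Pr(\tilde\X_{In}\leq\tilde\Y_{In}) \;=\; \sup\Pr\Bigl(\textstyle\sum_{i\in I}\tilde X_{in}\leq \sum_{i\in I}\tilde Y_{in}\Bigr),
\]
so the multidimensional coupling collapses to a one-dimensional one (Step~2 of the proof of Lemma~\ref{lem:aroundhatalpha}). The one-dimensional problem is then solved cleanly by Lemma~\ref{lemma:couplingprobability}, $\sup\Pr(X\leq Y)=\inf_z F(z)-G(z)+1$, and the interchange of supremum and weak limit is justified by uniform convergence of the one-dimensional distribution functions on compacts. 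Without this reduction your route does not close.

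\textbf{A second gap is in regime~(i).} You claim that ``every block has its $\tilde\X$-center below its $\tilde\Y$-center by $\omega(\sqrt n)$'' and that an \emph{independent} coupling of the block counts therefore succeeds with probability $\to 1$. This fails in the subcase $\alpha < \sum_{i}p_i\alpha_i$ when $p_i=q_i$ for some block $i$ (which the hypotheses allow, provided not all $\beta_i$ are equal): the centers of $\tilde X_{in}$ and $\tilde Y_{in}$ then coincide, and the independent coupling gives $\Pr(\tilde X_{in}\leq\tilde Y_{in})\to\tfrac12$ for that block, not $1$. A non-independent coupling is required. The paper's Lemma~\ref{lem:belowhatalpha} handles this subcase by first coupling the \emph{unconditional} vectors $\X_n\leq\Y_n$ on a common space and then modifying the $\tilde\Y_n$-sample according to whether the drawn $\omega_2$ lies in the $\X$-conditioning event; this yields a lower bound $\Pr(\Sigma_n^X\geq k_n)/\Pr(\Sigma_n^Y\geq k_n)\to 1$. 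Your center-gap argument only applies once $\alpha\geq\sum_i p_i\alpha_i$, and even then via Theorem~\ref{thm:lln}, not via an independent coupling.

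Your treatment of regime~(iii) is essentially correct in spirit for $\alpha\leq\sum_i q_i\alpha_i$, but when $\alpha\geq\sum_i q_i\alpha_i$ the $\tilde\Y$-centers are $d_i m_{in}$ rather than $q_i m_{in}$, and the paper uses a small additional argument (in the proof of Lemma~\ref{lem:abovehatalpha}) to find a block with $c_i > d_i$; you would need to add that case as well.
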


\begin{remark}
	If $\beta_i\neq \beta_j$ for some $i\neq j$, and $(k_n - \hat k_n) / 
	\sqrt{n}$ does not converge as $n\to\infty$ and does not diverge to either 
	$\infty$ or~$-\infty$, then $\sup \Pr(\tilde \X_n \leq \tilde \Y_n)$ does 
	not converge either. This follows from the strict monotonicity of~$P_K$, 
	by considering the limits along different subsequences of the~$k_n$.
\end{remark}

To demonstrate Theorem~\ref{thm:asymptstochdom}, recall the example from 
Section~\ref{ssec:framework}. Here $\beta_{\max} = 1$, $\hat k_n = p n$, $I = 
\{1\}$ and $a^2 = b^2 = \tfrac{1}{2} p (1-p)$. If $\alpha = p$, then we have 
that $(k_n - \hat k_n) / \sqrt{n} \to 0$ as $n\to\infty$. Hence, by 
Theorem~\ref{thm:asymptstochdom}, $\sup\Pr(\tilde \X_n\leq \tilde \Y_n)$ 
converges to
\[
	P_0 = 1 - 2\int_{-\infty}^0 \frac{e^{-z^2/2}}{\sqrt{2\pi}} \left( \Phi(-z) 
	- 1/2 \right) \, dz = \frac34.
\]
In fact, Theorem~\ref{thm:asymptstochdom} shows that we can obtain any value 
between $0$ and~$1$ for the limit by adding $\floor{K\sqrt n}$ successes 
to~$k_n$, for $K\in\R$.

Next we turn to the proofs of our results. Results in 
Section~\ref{ssec:stochDomination} are proved in 
Section~\ref{sec:stochDomination}, results in 
Section~\ref{ssec:weakConvergence} are proved in 
Section~\ref{sec:weakConvergence} and finally, results in 
Section~\ref{ssec:asymptStochDomination} are proved in 
Section~\ref{sect:asymptStochDomination}.

\section[Domination: finite case]{Stochastic domination of finite vectors}
\label{sec:stochDomination}

Let $\X=(X_1,\dots,X_n)$ and $\Y=(Y_1,\dots,Y_n)$ be vectors of independent 
Bernoulli random variables with success probabilities $p_1,\dots,p_n$ and 
$q_1,\dots,q_n$ respectively, where $0< p_i\leq q_i< 1$ for $i\in 
\{1,\dots,n\}$.

Suppose that $p_i = p$ for all~$i$. Then $\sum_{i=1}^n X_i$ has a binomial 
distribution with parameters $n$ and~$p$. The quotient
\[
	\frac{\Pr(\sum_{i=1}^n X_i = k+1)}{\Pr(\sum_{i=1}^n X_i = k)} = 
	\frac{n-k}{k+1} \frac{p}{1-p}
\]
is strictly increasing in~$p$ and strictly decreasing in~$k$, and it is also 
easy to see that
\[
	\textstyle \Law(\X | \sum_{i=1}^n X_i = k)
	\preceq \Law(\X | \sum_{i=1}^n X_i = k+1).
\]
The following two lemmas show that these two properties hold for general 
success probabilities $p_1,\dots,p_n$.

\begin{lemma}\label{lemma:quotients}
	For $k\in \{0,1,\dots,n-1\}$, consider the quotients
	\begin{equation}\label{eqn:quotienteq}
		Q^n_k := \frac{\Pr(\sum_{i=1}^n X_i = k+1)}{\Pr(\sum_{i=1}^n X_i = k)}
	\end{equation}
	and
	\begin{equation}\label{eqn:quotientgeq}
		\frac{\Pr(\sum_{i=1}^n X_i \geq k+1)}{\Pr(\sum_{i=1}^n X_i \geq k)}.
	\end{equation}
	Both \eqref{eqn:quotienteq} and~\eqref{eqn:quotientgeq} are strictly 
	increasing in $p_1,\dots,p_n$ for fixed~$k$, and strictly decreasing 
	in~$k$ for fixed $p_1,\dots,p_n$.
\end{lemma}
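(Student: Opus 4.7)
The plan is to derive the whole lemma from the classical strict log-concavity of the probability mass function of a sum of $n \geq 2$ independent Bernoulli variables with parameters in $(0,1)$: writing $T_n := \sum_{i=1}^n X_i$, this says $\Pr(T_n = j)^2 > \Pr(T_n = j-1) \Pr(T_n = j+1)$ for $1 \leq j \leq n-1$. The companion statement that right-tail sums of a strictly log-concave sequence of positive reals form a strictly log-concave sequence will also be used, both for $T_n$ and for the sums $T_n^{(j)} := \sum_{i \neq j} X_i$.

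For monotonicity in each $p_j$, I would condition on $X_j$. Both~\eqref{eqn:quotienteq} and~\eqref{eqn:quotientgeq} then take the M\"obius-like form
\[
	\frac{p_j\, v + (1 - p_j)\, w}{p_j\, u + (1 - p_j)\, v},
\]
where $(u, v, w) = (\Pr(T_n^{(j)} = k-1), \Pr(T_n^{(j)} = k), \Pr(T_n^{(j)} = k+1))$ in the first case, and the corresponding triple with $\{=\}$ replaced by $\{\geq\}$ in the second. A short differentiation shows that the derivative with respect to $p_j$ has the sign of $v^2 - uw$, so strict increase in $p_j$ reduces to strict log-concavity of the PMF of $T_n^{(j)}$ for~\eqref{eqn:quotienteq}, and to strict log-concavity of the right-tail sequence of $T_n^{(j)}$ for~\eqref{eqn:quotientgeq}.

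For monotonicity in $k$: strict decrease of $Q_k^n = \Pr(T_n = k+1)/\Pr(T_n = k)$ is literally strict log-concavity of $\Pr(T_n = \cdot)$ at index $k+1$. For~\eqref{eqn:quotientgeq}, writing $R_k := \Pr(T_n \geq k)$, strict decrease in $k$ is the inequality $R_{k+1}^2 > R_k R_{k+2}$, i.e., strict log-concavity of the right-tail sequence of $T_n$. Hence all four monotonicity claims have been reduced to the two auxiliary log-concavity statements.

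I would prove strict log-concavity of the PMF by induction on $n$, using that convolution with the Bernoulli$(p_n)$ weights preserves strict log-concavity via the identity
\[
	b_j^2 - b_{j-1} b_{j+1} = p_n^2 (a_{j-1}^2 - a_{j-2} a_j) + (1 - p_n)^2 (a_j^2 - a_{j-1} a_{j+1}) + p_n (1 - p_n)(a_{j-1} a_j - a_{j-2} a_{j+1}),
\]
each of whose three right-hand-side terms is non-negative (the third because multiplying the two log-concavity inequalities gives $a_{j-1}^2 a_j^2 \geq a_{j-2} a_{j-1} a_j a_{j+1}$), with at least one strictly positive at every interior index $j$; the base case $n = 2$ is a direct AM-GM computation. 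For the tail log-concavity, one proves equivalently that $p_k/R_k$ is non-decreasing in $k$: when $p_k \leq p_{k+1}$ the inequality $p_{k+1} R_k \geq p_k R_{k+1}$ is immediate, while when $p_k > p_{k+1}$ one bounds $R_{k+1}$ by a geometric series with ratio $p_{k+1}/p_k$ using the non-increase of the log-concave ratios $p_{j+1}/p_j$, obtaining $R_{k+1} \leq p_k p_{k+1}/(p_k - p_{k+1})$, with strictness inherited from the strict PMF log-concavity. The main obstacle is the careful bookkeeping of strictness at the boundary of the support (where some $a$-values vanish), which fits within the index range $k \in \{0, \dots, n-1\}$ imposed by the statement.
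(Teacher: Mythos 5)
Your proof is correct, but it takes a genuinely different route from the paper's. You reduce all four monotonicity claims to two auxiliary facts---strict log-concavity of the PMF of $T_n := \sum_{i=1}^n X_i$ (and of $T_n^{(j)}$) and strict log-concavity of the corresponding right-tail sequence---via the M\"obius reduction showing that the derivative of either quotient with respect to $p_j$ has the sign of $v^2 - uw$. This is elegant and unifies the treatment of~\eqref{eqn:quotienteq} and~\eqref{eqn:quotientgeq}: both reduce to the same determinant-sign computation, and the $k$-monotonicity claims \emph{are} the log-concavity statements. The paper's proof instead works with the recursion $Q^n_k = \bigl(p_n + Q^{n-1}_k (1-p_n)\bigr) / \bigl(p_n / Q^{n-1}_{k-1} + (1-p_n)\bigr)$, proves monotonicity in $p_1$ by a direct induction on $n$, and then extracts the $k$-monotonicity as a free byproduct by computing $\partial Q^n_k / \partial p_n$ and reading off the sign of $1 - Q^{n-1}_k/Q^{n-1}_{k-1}$; it states the $\{\geq\}$ case is handled similarly. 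The paper's argument is shorter and self-contained, with no appeal to log-concavity machinery. Your argument is somewhat longer once the induction for PMF log-concavity and the geometric-series bound for tail log-concavity are spelled out (and, as you note yourself, one must track strictness carefully at the support boundary, where some $a_j$ vanish), but it has the advantage of exposing the lemma as an instance of the well-known strong unimodality of Bernoulli convolutions, so one could alternatively cite the classical log-concavity result and shorten the write-up considerably. Both proofs are valid; they differ in where the inductive work is done (on $Q^n_k$ directly versus on the underlying PMF) and in what structural fact they make visible.
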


\begin{proof}
	We only give the proof for~\eqref{eqn:quotienteq}, since the proof 
	for~\eqref{eqn:quotientgeq} is similar. First we will prove that $Q^n_k$ 
	is strictly increasing in $p_1,\dots,p_n$ for fixed~$k$. By symmetry, it 
	suffices to show that $Q^n_k$ is strictly increasing in~$p_1$. We show 
	this by induction on~$n$. The base case $n=1$, $k=0$ is immediate. Next 
	note that for $n\geq2$ and $k\in \{0,\dots,n-1\}$,
	\[\begin{split}
		Q^n_k
		&= \frac{\Pr(\sum_{i=1}^{n-1} X_i = k) p_n + \Pr(\sum_{i=1}^{n-1} X_i 
		= k+1) (1-p_n)} {\Pr(\sum_{i=1}^{n-1} X_i = k-1) p_n + 
		\Pr(\sum_{i=1}^{n-1} X_i = k) (1-p_n)} \\
		&= \frac{p_n + Q^{n-1}_k (1-p_n)}{p_n / Q^{n-1}_{k-1} + (1-p_n)},
	\end{split}\]
	which is strictly increasing in~$p_1$ by the induction hypothesis (in the 
	case $k=n-1$, use $Q^{n-1}_k = 0$, and in the case $k=0$, use 
	$1/Q^{n-1}_{k-1} = 0$).

	To prove that $Q^n_k$ is strictly decreasing in~$k$ for fixed 
	$p_1,\dots,p_n$, note that since $Q^n_k$ is strictly increasing in~$p_n$ 
	for fixed $k\in \{1,\dots,n-2\}$, we have
	\[
		0
		< \frac{\partial}{\partial p_n} Q^n_k = \frac{\partial}{\partial p_n} 
		\frac{p_n + Q^{n-1}_k (1-p_n)} {p_n/Q^{n-1}_{k-1} + (1-p_n)}
		= \frac{1 - Q^{n-1}_k / Q^{n-1}_{k-1}}{\bigl( p_n / Q^{n-1}_{k-1} + 
		(1-p_n) \bigr)^2}.
	\]
	Hence, $Q^{n-1}_k < Q^{n-1}_{k-1}$. This argument applies for any 
	$n\geq2$.
\end{proof}

Let $\X^k = (X^k_1,\dots,X^k_n)$ have the conditional law of~$\X$, conditioned 
on the event $\{\sum_{i=1}^n X_i = k\}$. Our next lemma gives an explicit 
coupling of the~$\X^k$ in which they are ordered. The existence of such a 
coupling was already proved in~\cite[Proposition~6.2]{JN}, but our explicit 
construction is new and of independent value. In our construction, we freely 
regard $\X^k$ as a random subset of $\{1,\dots,n\}$ by identifying $\X^k$ with 
$\{i\in \{1,\dots,n\} \colon X^k_i = 1\}$. For any $K\subset \{1,\dots,n\}$, 
let $\{\X_K= \1\}$ denote the event $\{X_i=1\ \forall i\in K\}$, and for any 
$I\subset \{1,\dots,n\}$ and $j\in \{1,\dots,n\}$, define
\[
	\gamma_{j,I}
	:= \sum_{L\subset\{1,\dots,n\}\colon \car{L}=\car{I}+1}
		\frac{\I(j\in L)}{\car{L\setminus I}}
		\Pr( \X_L= \1 \mid {\textstyle \sum_{i=1}^n}X_i = \car{I}+1 ).
\]

\begin{lemma}\label{lem:erik}
	For any $I\subset \{1,\dots,n\}$, the collection $\{\gamma_{j,I}\}_{j\in 
	\lbrace1,\dots,n\rbrace \setminus I}$ is a probability vector. Moreover, 
	if~$I$ is picked according to~$\X^k$ and then~$j$ is picked according to 
	$\{\gamma_{j,I}\}_{j\in \lbrace1,\dots,n\rbrace \setminus I}$, the 
	resulting set $J = \{I,j\}$ has the same distribution as if it was picked 
	according to~$\X^{k+1}$. Therefore, we can couple the sequence 
	$\{\X^k\}_{k=1}^n$ such that $\Pr(\X^1\leq \X^2\leq \dots\leq \X^{n-1}\leq 
	\X^n) = 1$.
\end{lemma}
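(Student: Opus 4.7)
The plan is to verify the three parts of the lemma in order; the combinatorial identity in~(b) will carry all the weight. Throughout I write $P_S := \Pr(\X_S = \1 \mid \sum_{i=1}^n X_i = \car{S})$, so that $\Pr(\X^k = I) = P_I$ for $\car{I} = k$. For~(a) I would swap the order of summation in $\sum_{j\notin I}\gamma_{j,I}$ and observe that $\sum_{j\notin I}\I(j\in L) = \car{L\setminus I}$ whenever $\car{L} = \car{I}+1$; the denominator cancels, and the remaining sum is $\sum_{L:\car{L}=\car{I}+1} P_L = 1$.

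For~(b), fix $J$ with $\car{J}=k+1$; the goal is
\[
\sum_{I\subset J,\,\car{I}=k} P_I\, \gamma_{J\setminus I,I} = P_J.
\]
Reindexing by $j := J\setminus I \in J$ and expanding $\gamma$ rewrites the left side as
\[
\sum_{j\in J} P_{J\setminus\{j\}} \sum_{L:\car{L}=k+1,\,j\in L} \frac{P_L}{\car{L\setminus(J\setminus\{j\})}}.
\]
The key algebraic step, immediate from the product formula $P_S \propto \prod_{i\in S} p_i/(1-p_i)$ (with proportionality constant depending only on $\car{S}$), is the symmetry identity
\[
P_L\,P_{J\setminus\{j\}} = P_J\,P_{L\setminus\{j\}} \qquad (j\in L\cap J,\ \car{L}=\car{J}=k+1),
\]
since both products share the same multiset of factors $p_i/(1-p_i)$. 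Substituting it factors out $P_J$. After the change of variables $M := L\setminus\{j\}$, so that $\car{M}=k$, $j\notin M$, and $\car{L\setminus(J\setminus\{j\})} = \car{M\setminus J}+1 = k+1-\car{M\cap J}$, and then swapping the sums over $j$ and $M$, the task reduces to
\[
\sum_{M:\car{M}=k} P_M \cdot \frac{\car{J\setminus M}}{k+1-\car{M\cap J}} = 1,
\]
which holds because the fraction is identically $1$ (using $\car{J\setminus M} = \car{J}-\car{M\cap J}$) and $\sum_M P_M = 1$.

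The coupling in~(c) is then built by iteration: sample $\X^1$ from its law, and given $\X^k$ with $k\geq 1$, sample $j$ according to $\{\gamma_{j,\X^k}\}_{j\notin \X^k}$ and set $\X^{k+1} := \X^k \cup \{j\}$. By construction $\X^k \subseteq \X^{k+1}$, i.e.\ $\X^k \leq \X^{k+1}$ as $0$-$1$ vectors, and~(b) with induction on $k$ ensures each $\X^{k+1}$ has the correct marginal law. I expect the main obstacle to be spotting the symmetry identity $P_L P_{J\setminus\{j\}} = P_J P_{L\setminus\{j\}}$ and recognizing the ensuing collapse of the combinatorial factor; once these are in hand the remainder is mechanical. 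A more computational alternative would expand everything in terms of the elementary symmetric polynomials in the $p_i/(1-p_i)$, but the symmetry route keeps the bookkeeping to a minimum.
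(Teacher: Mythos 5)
Your proof is correct and follows essentially the same route as the paper's: your symmetry identity $P_L\,P_{J\setminus\{j\}} = P_J\,P_{L\setminus\{j\}}$ is a symmetric restatement of the paper's equation~\eqref{eqn:couplinglemma}, and the change of variables $M = L\setminus\{j\}$ together with the collapse of $\car{J\setminus M}/(k+1-\car{M\cap J})$ to~$1$ matches the paper's substitution $K = L\setminus\{j\}$ and its use of $\car{L\setminus I} = \car{J\setminus K}$. Parts (a) and (c) also coincide with the paper's argument.
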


\begin{proof}
	Throughout the proof, $I$, $J$, $K$ and~$L$ denote subsets of 
	$\{1,\dots,n\}$, and we simplify notation by writing $\Sigma_n := 
	\sum_{i=1}^n X_i$. First observe that
	\[
		\sum_{j\notin I} \gamma_{j,I}
		= \sum_{L\colon \car{L}=\car{I}+1} \Pr(\X_L=\1 \mid \Sigma_n = 
		\car{I}+1) = 1,
	\]
	which proves that the $\{\gamma_{j,I}\}_{j\notin I}$ form a probability 
	vector, since $\gamma_{j,I}\geq0$.

	Next note that for any~$K$ containing~$j$,
	\begin{equation}\label{eqn:couplinglemma}
		\frac{\Pr(\X_K= \1 \mid \Sigma_n=\car{K})}
		{\Pr(\X_{K\setminus\{j\}}= \1 \mid \Sigma_n=\car{K}-1)}
		= \frac{\Pr(X_j=1)}{\Pr(X_j=0)}
		\frac{\Pr(\Sigma_n=\car{K}-1)}{\Pr(\Sigma_n=\car{K})}.
	\end{equation}
	Now fix~$J$, and for $j\in J$, let $I = I(j,J) = J\setminus\{j\}$. Then 
	for $j\in J$, by~\eqref{eqn:couplinglemma},
	\[\begin{split}
		\gamma_{j,I}
		&= \frac{\Pr(\X_J= \1 \mid \Sigma_n=\car{J})}
				{\Pr(\X_I= \1 \mid \Sigma_n=\car{I})}
			\sum_{L\colon \car{L}=\car{J}}
			\frac{\I(j\in L)}{\car{L\setminus I}}
			\Pr(\X_{L\setminus\{j\}}= \1 \mid \Sigma_n=\car{I}) \\
		&= \frac{\Pr(\X_J= \1 \mid \Sigma_n=\car{J})}
				{\Pr(\X_I= \1 \mid \Sigma_n=\car{I})}
			\sum_{K\colon \car{K}=\car{I}}
			\frac{\I(j\notin K)}{\car{J\setminus K}}
			\Pr(\X_K= \1 \mid \Sigma_n=\car{I}),
	\end{split}\]
	where the second equality follows upon writing $K = L\setminus \{j\}$, and 
	using $\car{L\setminus I} = \car{L\setminus J}+1 = \car{K\setminus J}+1 = 
	\car{J\setminus K}$ in the sum. Hence, by summing first over~$j$ and then 
	over~$K$, we obtain
	\[
		\sum_{j\in J} \gamma_{j,I} \Pr(\X_I=\1 \mid \Sigma_n =\car{I})
		= \Pr(\X_J=\1 \mid \Sigma_n=\car{J}). \qedhere
	\]
\end{proof}

\begin{corollary}\label{cor:k,k+1}
	For $k\in\{0,1,\dots,n-1\}$ we have
	\[
		\textstyle \Law(\X | \sum_{i=1}^n X_i \geq k)
		\preceq \Law(\X | \sum_{i=1}^n X_i \geq k+1).
	\]
\end{corollary}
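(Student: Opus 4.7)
The plan is to bootstrap Lemma~\ref{lem:erik} using the fact that the conditional law of the total number of successes itself is monotone in the threshold. Specifically, I would first prove the one-dimensional statement
\[
	\textstyle \Law(\Sigma_n \mid \Sigma_n\geq k) \preceq \Law(\Sigma_n \mid
	\Sigma_n\geq k+1), \qquad \Sigma_n := \sum_{i=1}^n X_i,
\]
by a short tail-ratio calculation: for $m\leq k+1$ the tail probability on the right is~$1$, and for $m\geq k+2$ the ratio of the two conditional tail probabilities equals $\Pr(\Sigma_n\geq k)/\Pr(\Sigma_n\geq k+1)\geq1$. Hence on some probability space we can find integer random variables $S\leq T$ almost surely, with $S\sim \Law(\Sigma_n\mid \Sigma_n\geq k)$ and $T\sim \Law(\Sigma_n\mid \Sigma_n\geq k+1)$.

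On an independent probability space, invoke the monotone coupling $\X^0\leq \X^1\leq \dots\leq \X^n$ of Lemma~\ref{lem:erik}. Form the product space and set $\U := \X^S$ and $\V := \X^T$. Because $S\leq T$ pointwise and $\X^j\leq \X^{j+1}$ pointwise for every~$j$, we have $\U\leq \V$ almost surely.

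It remains to verify the marginal distributions. Conditioning on $S=j$ gives $\U=\X^j$, whose law is $\Law(\X\mid \Sigma_n=j)$ by construction of the coupling; averaging over~$j$ with weights $\Pr(\Sigma_n=j \mid \Sigma_n\geq k)$ recovers exactly $\Law(\X\mid \Sigma_n\geq k)$, and the same argument applied to~$T$ yields the law of~$\V$. This produces the required coupling.

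There is no real obstacle here: the monotone coupling of the exact-count conditional laws supplied by Lemma~\ref{lem:erik} does all the combinatorial work, and the only additional ingredient is the essentially trivial one-dimensional domination of the total counts. The only point requiring a word of care is the independence between the two couplings, which is arranged simply by working on a product probability space.
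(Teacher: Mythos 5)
Your proof is correct and takes essentially the same approach as the paper's: both couple the total counts monotonically (you derive the one-dimensional domination $\Law(\Sigma_n\mid\Sigma_n\geq k)\preceq\Law(\Sigma_n\mid\Sigma_n\geq k+1)$ abstractly, while the paper constructs an explicit coupling of the counts by cases), and then both invoke the monotone coupling of the exact-count conditioned laws from Lemma~\ref{lem:erik} to lift this to the full vectors. Your presentation is slightly more modular, but the underlying two-step argument is identical.
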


\begin{proof}
	Using Lemma~\ref{lem:erik}, we will construct random vectors $\U$ and~$\V$ 
	on a common probability space such that $\U$ and~$\V$ have the conditional 
	distributions of~$\X$ given $\{\sum_{i=1}^n X_i\geq k\}$ and~$\X$ given 
	$\{\sum_{i=1}^n X_i\geq k+1\}$, respectively, and $\U\leq\V$ with 
	probability~$1$.

	First pick an integer~$m$ according to the conditional law of 
	$\sum_{i=1}^n X_i$ given $\{\sum_{i=1}^n X_i\geq k\}$. If $m\geq k+1$, 
	then pick~$\U$ according to the conditional law of~$\X$ given 
	$\{\sum_{i=1}^n X_i = m\}$, and set $\V=\U$. If $m=k$, then first pick an 
	integer $m+\ell$ according to the conditional law of $\sum_{i=1}^n X_i$ 
	given $\{\sum_{i=1}^n X_i \geq k+1\}$. Next, pick $\U$ and~$\V$ such that 
	$\U$ and~$\V$ have the conditional laws of~$\X$ given $\{\sum_{i=1}^n X_i 
	= m\}$ and~$\X$ given $\{\sum_{i=1}^n X_i = m+\ell\}$, respectively, and 
	$\U\leq\V$. This is possible by Lemma~\ref{lem:erik}. By construction, 
	$\U\leq\V$ with probability~$1$, and a little computation shows that $\U$ 
	and~$\V$ have the desired marginal distributions.
\end{proof}

Now we are in a position to prove Propositions~\ref{prop:sums}, 
\ref{prop:equallaws} and~\ref{prop:stochdom}.

\begin{proof}[Proof of Proposition~\ref{prop:sums}]
	By Lemma~\ref{lemma:quotients} we have that for $\ell\in\{1,\dots,n-k\}$,
	\[
		\frac{\Pr(\sum_{i=1}^n X_i\geq k+\ell)}{\Pr(\sum_{i=1}^n X_i\geq k)}
		= \prod_{j=0}^{\ell-1} \frac{\Pr(\sum_{i=1}^n X_i \geq k+j+1)}
		{\Pr(\sum_{i=1}^n X_i \geq k+j)}
	\]
	is strictly increasing in $p_1,\dots,p_n$. This implies that for $\ell\in 
	\{1,\dots,n-k\}$,
	\[
		\textstyle
		\Pr(\sum_{i=1}^n X_i \geq k+\ell \mid \sum_{i=1}^n X_i \geq k) \leq
		\Pr(\sum_{i=1}^n Y_i \geq k+\ell \mid \sum_{i=1}^n Y_i \geq k).
		\qedhere
	\]
\end{proof}

\begin{proof}[Proof of Proposition~\ref{prop:equallaws}]
	Let $x,y\in \{0,1\}^n$ be such that $\sum_{i=1}^n x_i = \sum_{i=1}^n y_i$  
	and let $k = \sum_{i=1}^n x_i$. Write $I=\{i\in \{1,\dots,n\}\colon x_i = 
	1\}$ and, likewise, $J=\{i\in \{1,\dots,n\}\colon y_i = 1\}$, and recall 
	the definition~\eqref{eqn:defbeta} of~$\beta_i$. We have
	\begin{multline}\label{eqn:equallawseq}
		\frac{\Pr(\X=x\mid \sum_{i=1}^n X_i = k)}{\Pr(\X=y\mid \sum_{i=1}^n 
		X_i = k)} = \frac{\prod_{i\in I} p_i \prod_{i\notin I} (1-p_i)} 
		{\prod_{i\in J} p_i \prod_{i\notin J} (1-p_i)} \\
		= \prod_{i\in I \setminus J} \frac{p_i}{1-p_i} \prod_{i\in J \setminus 
		I} \frac{1-p_i}{p_i}
		= \frac{\prod_{i\in I\setminus J} \beta_i}{\prod_{i\in J\setminus I} 
		\beta_i} \frac{\Pr(\Y=x \mid \sum_{i=1}^n Y_i = k)}{\Pr(\Y = y \mid 
		\sum_{i=1}^n Y_i = k)}.
	\end{multline}

	Since $\car{I} = \car{J} = k$, we have $\car{I\setminus J} = 
	\car{J\setminus I}$. Hence, (i) implies~(ii), and (ii) trivially 
	implies~(iii). To show that (iii) implies~(i), suppose that $\Law(\X | 
	\sum_{i=1}^n X_i = k) = \Law(\Y | \sum_{i=1}^n Y_i = k)$ for a given $k\in 
	\{1,\dots,n-1\}$. Let $i\in \{2,\dots,n\}$ and let~$K$ be a subset of 
	$\{2,\dots,n\}\setminus \{i\}$ with exactly $k-1$ elements. Choosing $I = 
	\{1\}\cup K$ and $J = K\cup\{i\}$ in~\eqref{eqn:equallawseq} yields 
	$\beta_i = \beta_1$.
\end{proof}

\begin{proof}[Proof of Proposition~\ref{prop:stochdom}]
	By Proposition~\ref{prop:equallaws} and Lemma~\ref{lem:erik}, we have for 
	$m\in \{0,1,\dots,n\}$ and $\ell\in \{0,1,\dots,n-m\}$
	\[
		\textstyle \Law(\X | \sum_{i=1}^n X_i = m)
		\preceq \Law(\Y | \sum_{i=1}^n Y_i = m+\ell).
	\]
	Using this result and Proposition~\ref{prop:sums}, we will construct 
	random vectors $\U$ and~$\V$ on a common probability space such that $\U$ 
	and~$\V$ have the conditional distributions of~$\X$ given $\{\sum_{i=1}^n 
	X_i \geq k\}$ and~$\Y$ given $\{\sum_{i=1}^n Y_i \geq k\}$, respectively, 
	and $\U\leq\V$ with probability~$1$.

	First, pick integers $m$ and $m+\ell$ such that they have the conditional 
	laws of $\sum_{i=1}^n X_i$ given $\{\sum_{i=1}^n X_i \geq k\}$ and 
	$\sum_{i=1}^n Y_i$ given $\{\sum_{i=1}^n Y_i \geq k\}$, respectively, and 
	$m\leq m+\ell$ with probability~$1$. Secondly, pick $\U$ and~$\V$ such 
	that they have the conditional laws of~$\X$ given $\{\sum_{i=1}^n X_i = 
	m\}$ and~$\Y$ given $\{\sum_{i=1}^n Y_i = m+\ell\}$, respectively, and 
	$\U\leq\V$ with probability~$1$. A little computation shows that the 
	vectors $\U$ and~$\V$ have the desired marginal distributions.
\end{proof}

We close this section with a minor result, which gives a condition under which 
we do not have stochastic ordering.

\begin{proposition}
	If $p_i=q_i$ for some $i\in\{1,\dots,n\}$ but not for all~$i$, then for 
	$k\in\{1,\dots,n-1\}$,
	\[
		\textstyle \Law(\X | \sum_{i=1}^n X_i \geq k)
		\not\preceq \Law(\Y | \sum_{i=1}^n Y_i \geq k).
	\]
\end{proposition}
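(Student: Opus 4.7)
The strategy is to derive a contradiction to stochastic dominance by inspecting the $\ell$-th marginal probability at an index where $p_\ell=q_\ell$. First I would pick such an $\ell$; since by hypothesis not all $p_i$ equal $q_i$, there also exists some $j^*\neq\ell$ with $p_{j^*}<q_{j^*}$. If the claimed dominance held, extracting the $\ell$-th marginal would force
\[
	\Pr\bigl(X_\ell=1 \bigm| {\textstyle\sum_{i=1}^n X_i}\geq k\bigr)
	\leq \Pr\bigl(Y_\ell=1 \bigm| {\textstyle\sum_{i=1}^n Y_i}\geq k\bigr),
\]
so it is enough to establish the opposite strict inequality.

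Write $p:=p_\ell=q_\ell$ and let $S_X:=\sum_{i\neq\ell}X_i$ and $S_Y:=\sum_{i\neq\ell}Y_i$. Independence of the coordinates gives
\[
	\Pr\bigl(X_\ell=1 \bigm| {\textstyle\sum_{i=1}^n X_i}\geq k\bigr)
	= \frac{p\Pr(S_X\geq k-1)}{p\Pr(S_X\geq k-1)+(1-p)\Pr(S_X\geq k)},
\]
and the analogous identity holds for $Y$. Since this expression is a strictly decreasing function of the ratio $\Pr(S_X\geq k)/\Pr(S_X\geq k-1)$, the problem reduces to proving
\[
	\frac{\Pr(S_X\geq k)}{\Pr(S_X\geq k-1)} < \frac{\Pr(S_Y\geq k)}{\Pr(S_Y\geq k-1)}.
\]

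For this I would invoke Lemma~\ref{lemma:quotients} applied to the reduced system of $n-1$ independent Bernoulli variables indexed by $i\neq\ell$. For $k\in\{1,\dots,n-1\}$ we have $k-1\in\{0,\dots,n-2\}$, so the quotient falls within the range covered by the lemma, and all denominators are strictly positive because every success probability lies in $(0,1)$. The lemma states that the quotient is strictly increasing in each individual success probability. Interpolating from the parameters $\{p_i\}_{i\neq\ell}$ to $\{q_i\}_{i\neq\ell}$ one coordinate at a time then yields the required strict inequality: the intermediate steps are weakly monotone because $p_i\leq q_i$, and the step at $i=j^*$ is strict by the lemma.

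The main conceptual point is the first paragraph: one must recognize that under the hypothesis $p_\ell=q_\ell$, conditioning on a large total sum actually pulls the $\ell$-th marginal up \emph{more} for $\X$ than for $\Y$, because the remaining coordinates of~$\X$ are less likely to supply the needed successes, so the ``pressure'' to have $X_\ell=1$ is greater. Once this heuristic is turned into the algebraic identity above, Lemma~\ref{lemma:quotients} finishes the argument without further calculation.
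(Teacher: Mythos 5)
Your proposal is correct and follows essentially the same route as the paper's proof: pick an index $\ell$ with $p_\ell=q_\ell$ (the paper takes $\ell=n$ without loss of generality), rewrite the conditional marginal $\Pr(X_\ell=1\mid\sum X_i\geq k)$ in terms of the tail-probability quotient for the remaining $n-1$ coordinates, and invoke the strict monotonicity in the success probabilities from Lemma~\ref{lemma:quotients}. Your coordinate-by-coordinate interpolation merely spells out why the strict inequality from the lemma carries through, a detail the paper leaves implicit.
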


\begin{proof}
	Without loss of generality, assume that $p_n=q_n$. We have
	\begin{multline*}
		\textstyle \Pr(X_n=1 \mid \sum_{i=1}^n X_i \geq k) \\
	\begin{aligned}
		&= \rule[-4ex]{0pt}{0pt} \frac{p_n \Pr(\sum_{i=1}^{n-1} X_i \geq 
		k-1)}{p_n \Pr(\sum_{i=1}^{n-1} X_i \geq k-1) + (1-p_n) 
		\Pr(\sum_{i=1}^{n-1} X_i \geq k)} \\
		&= \rule[-4ex]{0pt}{0pt} \frac{p_n}{p_n + (1-p_n) \Pr(\sum_{i=1}^{n-1} 
		X_i \geq k) \big/ \Pr(\sum_{i=1}^{n-1} X_i \geq k-1)} \\
		&> \rule[-5ex]{0pt}{0pt} \frac{q_n}{q_n + (1-q_n) \Pr(\sum_{i=1}^{n-1} 
		Y_i \geq k) \big/ \Pr(\sum_{i=1}^{n-1} Y_i \geq k-1)} \\
		&= \textstyle \Pr(Y_n=1 \mid \sum_{i=1}^n Y_i \geq k),
	\end{aligned}
	\end{multline*}
	where the strict inequality follows from Lemma~\ref{lemma:quotients}.
\end{proof}

\section{Weak convergence}
\label{sec:weakConvergence}

We now turn to the framework for asymptotic domination described in 
Section~\ref{ssec:framework} and to the setting of 
Section~\ref{ssec:weakConvergence}. Recall that $X_{in}$ is the number of 
successes of the vector~$\X_n$ in block~$i$. We want to study the joint 
convergence in distribution of the~$X_{in}$ as $n\to\infty$, conditioned on 
$\{\Sigma_n \geq k_n\}$, and also conditioned on $\{\Sigma_n = k_n\}$. Since 
we are interested in the limit $n\to\infty$, we may assume from the outset 
that the values of~$n$ we consider are so large that $k_n$ and all~$m_{in}$ 
are strictly between $0$ and~$n$, to avoid degenerate situations.

We will first consider the case where we condition on the event $\{\Sigma_n = 
k_n\}$. Lemma~\ref{lem:centres} below states that the~$X_{in}$ will then 
concentrate around the values $c_{in} m_{in}$, where the~$c_{in}$ are 
determined by the system of equations~\eqref{eqn:centres}, which we repeat 
here for the convenience of the reader:
\[
	\left\{ \begin{aligned}
		&\frac{1-c_{in}}{c_{in}} \frac{p_i}{1-p_i}
		= \frac{1-c_{jn}}{c_{jn}} \frac{p_j}{1-p_j}
		&&& \forall i,j\in\{1,\dots,M\}; \\
		& \textstyle\sum_{i=1}^M c_{in} m_{in} = k_n.
	\end{aligned} \right.
	\tag{\ref{eqn:centres}}
\]
Before we turn to the proof of this concentration result, let us first look at 
the system~\eqref{eqn:centres} in more detail. If we write
\begin{equation}\label{eqn:An}
	A_n = \frac{1-c_{in}}{c_{in}} \frac{p_i}{1-p_i}
\end{equation}
for the desired common value for all~$i$, then
\[
	c_{in} = \frac{p_i}{p_i+A_n(1-p_i)}.
\]
Note that this is equal to~$1$ for $A_n = 0$ and to~$p_i$ for $A_n = 1$, and 
strictly decreasing to~$0$ as $A_n\to\infty$, so that there is a unique 
$A_n>0$ such that
\begin{equation}\label{eqn:uniqueAn}
	\sum_{i=1}^M c_{in} m_{in}
	= \sum_{i=1}^M \frac{p_i m_{in}}{p_i+A_n(1-p_i)} = k_n.
\end{equation}
It follows that the system~\eqref{eqn:centres} does have a unique solution, 
characterized by this value of~$A_n$. Moreover, it follows 
from~\eqref{eqn:uniqueAn} that if $k_n > \Ex(\Sigma_n) = \sum_{i=1}^M p_i 
m_{in}$, then $A_n<1$. Furthermore, $k_n/n \to \alpha$ and $m_{in}/n \to 
\alpha_i$. Hence, by dividing both sides in~\eqref{eqn:uniqueAn} by $n$, and 
taking the limit $n\to\infty$, we see that the~$A_n$ converge to the unique 
positive number~$A$ such that
\[
	\sum_{i=1}^M \frac{p_i\alpha_i}{p_i+A(1-p_i)} = \alpha,
\]
where $A=1$ if $\alpha = \sum_{i=1}^M p_i\alpha_i$. As a consequence, we also 
have that
\[
	c_{in} \to c_i = \frac{p_i}{p_i+A(1-p_i)} \qquad \text{as $n\to\infty$}.
\]
Note that the~$c_i$ are the unique solution to the system of equations
\[
	\left\{ \begin{aligned}
		&\frac{1-c_i}{c_i} \frac{p_i}{1-p_i}
		= \frac{1-c_j}{c_j} \frac{p_j}{1-p_j}
		&&& \forall i,j\in\{1,\dots,M\}; \\
		& \textstyle\sum_{i=1}^M c_i \alpha_i = \alpha.
	\end{aligned} \right.
\]
Observe also that $c_i = p_i$ in case $A=1$, or equivalently $\sum_{i=1}^M p_i 
\alpha_i = \alpha$, which is the case when the total number of successes~$k_n$ 
is within~$o(n)$ of the mean~$\Ex(\Sigma_n)$. The concentration result:

\begin{lemma}\label{lem:centres}
	Let $c_{1n},\dots,c_{Mn}$ satisfy~\eqref{eqn:centres}. Then for each~$i$
	and all positive integers~$r$, we have that
	\[
	\Pr(\abs{X_{in}-c_{in}m_{in}} \geq Mr \mid \Sigma_n = k_n)
		\leq 2M e^{-(M-1)r^2/n}.
	\]
\end{lemma}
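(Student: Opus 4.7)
The plan is to combine a change-of-measure with a sum-constraint pigeon-hole argument, reducing the concentration question to the classical Hoeffding inequality for independent Bernoulli variables.

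My first step is to exploit the defining property of the $c_{in}$. By~\eqref{eqn:centres}, the ratios $(1-c_{in})/c_{in}\cdot p_i/(1-p_i) = A_n$ do not depend on $i$. If I introduce an auxiliary vector $\tilde\X_n$ of independent Bernoulli random variables in which every variable in block~$i$ has success probability $c_{in}$, then the corresponding ratios $\beta_i$ from~\eqref{eqn:defbeta} associated to the pair $(\X_n,\tilde\X_n)$ are all equal to~$A_n$. Proposition~\ref{prop:equallaws} then yields that the conditional laws of $\X_n$ and $\tilde\X_n$ given the total number of successes equals~$k_n$ coincide, so it suffices to prove the inequality under the product measure~$\tilde\Pr$ with parameters~$c_{in}$. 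The key gain is that under $\tilde\Pr$ the variable $\tilde X_{in}$ already has mean $c_{in}m_{in}$ and the total $\tilde\Sigma_n$ has integer mean exactly~$k_n$.

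Next, I would use the sum constraint together with a pigeon-hole. On the event $\{\tilde\Sigma_n = k_n\}$ we have $\sum_j(\tilde X_{jn}-c_{jn}m_{jn}) = 0$, so if $|\tilde X_{in}-c_{in}m_{in}|\ge Mr$ then the remaining $M-1$ blocks collectively deviate by the same amount in the opposite direction, and hence at least one $j\ne i$ must satisfy $|\tilde X_{jn}-c_{jn}m_{jn}|\ge Mr/(M-1)\ge r$ (the case $M=1$ being trivial since then $\tilde X_{1n}=k_n$ deterministically). A union bound over $j\ne i$, followed by independence of the $\tilde X_{jn}$ under~$\tilde\Pr$, gives
\[
    \tilde\Pr\bigl(|\tilde X_{in}-c_{in}m_{in}|\ge Mr,\, \tilde\Sigma_n=k_n\bigr)
    \le \sum_{j\ne i} \tilde\Pr\bigl(|\tilde X_{in}-c_{in}m_{in}|\ge Mr\bigr)\,\tilde\Pr\bigl(|\tilde X_{jn}-c_{jn}m_{jn}|\ge r\bigr),
\]
and each factor is controlled by the standard Hoeffding bound for a single Binomial, yielding exponential decay in $r^2/n$ (using $m_{in},m_{jn}\le n$).

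Dividing by $\tilde\Pr(\tilde\Sigma_n=k_n)$ then produces the desired conditional bound. The main obstacle is accounting for constants: since $\tilde\Sigma_n$ is a sum of independent Bernoullis with integer mean~$k_n$, the lower bound $\tilde\Pr(\tilde\Sigma_n=k_n)\gtrsim 1/\sqrt{n}$ (local CLT or Stirling) introduces a spurious $\sqrt n$ factor that must be absorbed into the exponent to recover the clean form $2Me^{-(M-1)r^2/n}$. This is handled by splitting into two regimes: whenever $r$ is small enough that $2Me^{-(M-1)r^2/n}\ge 1$ the inequality is trivially true since the left-hand side is a probability, and for the remaining, large-$r$ range the product of the two Hoeffding factors is so small that it easily absorbs the $\sqrt n$ term while still leaving the required exponential factor $e^{-(M-1)r^2/n}$ with a constant of the claimed form~$2M$.
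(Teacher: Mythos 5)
Your change-of-measure reduction is a nice idea: since the defining system~\eqref{eqn:centres} forces the ratios $\frac{p_i}{1-p_i}\frac{1-c_{in}}{c_{in}}$ to be constant in~$i$, Proposition~\ref{prop:equallaws} (whose proof never uses the ordering $p_i\le q_i$) does indeed let you replace the success probabilities $p_i$ by $c_{in}$ without changing the conditional law given $\{\Sigma_n=k_n\}$, and then the total has integer mean exactly~$k_n$. The pigeonhole under the sum constraint is also the same combinatorial step the paper uses.

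The gap is in the final step, where you divide by $\tilde\Pr(\tilde\Sigma_n=k_n)\gtrsim 1/\sqrt n$ and try to absorb the resulting factor $\sqrt n$ into the exponential. That absorption fails in an intermediate range of~$r$. Concretely, your union bound plus Hoeffding yields, up to harmless constants, a numerator of order $e^{-2(M^2+1)r^2/n}$, so after dividing you are left with something like $C\sqrt n\,e^{-2(M^2+1)r^2/n}$ and you must show this is $\le 2Me^{-(M-1)r^2/n}$, i.e.\ that $C'\sqrt n\le e^{(2M^2-M+3)r^2/n}$. Your ``trivial'' regime covers only $r^2/n\le \log(2M)/(M-1)$, i.e.\ $r$ up to a constant times $\sqrt n$. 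Just above this threshold, $r^2/n$ is still bounded by a constant, so $e^{(2M^2-M+3)r^2/n}$ is a constant not growing with~$n$, and it cannot dominate $C'\sqrt n$ for large~$n$. The inequality only kicks in once $r^2/n$ is of order $\log n$, leaving an uncovered window $r\in\bigl[c_1\sqrt n,\,c_2\sqrt{n\log n}\,\bigr]$ where neither argument gives the stated bound. This is not a tunable-constants issue: any route that first drops the constraint $\{\tilde\Sigma_n=k_n\}$ via a product-form upper bound and then divides by the point mass $\tilde\Pr(\tilde\Sigma_n=k_n)$ pays a polynomial price in~$n$ that the claimed inequality does not allow. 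The paper's proof avoids this entirely by never lower-bounding $\Pr(\Sigma_n=k_n)$: it proves the configuration-exchange inequality~\eqref{eqn:centrebound}, which compares $\Pr(X_{in}=\ell_i,X_{jn}=\ell_j,\dots,\Sigma_n=k_n)$ with the probability of the shifted configuration $(\ell_i-r,\ell_j+r,\dots)$, which still has total~$k_n$; summing over configurations then produces $\Pr(\Sigma_n=k_n)$ on the right-hand side by a change of variables rather than by a local CLT estimate. If you want to keep your change-of-measure framing, you would still need some such exchange or coupling argument \emph{inside} the event $\{\tilde\Sigma_n=k_n\}$ in place of the independent Hoeffding bounds.

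A minor further point: your union bound gives a prefactor of $4(M-1)$ (two Hoeffding factors of~$2$, times $M-1$ choices of~$j$), not $2M$, so even the constant in the claimed form would need repair; but this is secondary to the $\sqrt n$ issue above.
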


\begin{proof}
	The idea of the proof is as follows. Condition on $\{\Sigma_n = k_n\}$, 
	and consider the event that for some $i\neq j$ we have that $X_{in} = 
	c_{in}m_{in} + s$, and $X_{jn} = c_{jn} m_{jn} - t$, for some positive 
	numbers $s$ and~$t$. We will show that if the~$c_{in}$ 
	satisfy~\eqref{eqn:centres}, the event obtained by increasing $X_{in}$ 
	by~$1$ and decreasing $X_{jn}$ by~$1$ has smaller probability. This 
	establishes that the conditional distribution of the~$X_{in}$ is maximal 
	at the central values $c_{in}m_{in}$ identified by the 
	system~\eqref{eqn:centres}. The precise bound in Lemma~\ref{lem:centres} 
	also follows from the argument.

	Now for the details. Let $s$ and~$t$ be nonnegative real numbers such that 
	$c_{in}m_{in}+s$ and $c_{jn}m_{jn}-t$ are integers. By the binomial 
	distributions of $X_{in}$ and~$X_{jn}$ and their independence, if it is 
	the case that $0 \leq c_{in}m_{in}+s < m_{in}$ and $0 < c_{jn}m_{jn}-t 
	\leq m_{jn}$, then
	\begin{multline*}
		\frac{\Pr(X_{in} = c_{in}m_{in}+s+1, X_{jn} = c_{jn}m_{jn}-t-1)}
			 {\Pr(X_{in} = c_{in}m_{in}+s, X_{jn} = c_{jn}m_{jn}-t)} \\
		\begin{aligned}
			&= \left( \frac{m_{in}-c_{in}m_{in}-s} {c_{in}m_{in}+s+1} 
			\frac{p_i}{1-p_i} \right) \left( \frac{c_{jn}m_{jn}-t} 
			{m_{jn}-c_{jn}m_{jn}+t+1} \frac{1-p_j}{p_j} \right) \\
			&\leq \left( \frac{m_{in}-c_{in}m_{in}-s} {c_{in}m_{in}} 
			\frac{p_i}{1-p_i} \right) \left( \frac{c_{jn}m_{jn}-t} 
			{m_{jn}-c_{jn}m_{jn}} \frac{1-p_j}{p_j} \right).
		\end{aligned}
	\end{multline*}
	Hence, if the~$c_{in}$ satisfy~\eqref{eqn:centres}, then using $1-z \leq 
	\exp(-z)$ we obtain
	\begin{multline*}
		\frac{\Pr(X_{in} = c_{in}m_{in}+s+1, X_{jn} = c_{jn}m_{jn}-t-1)}
			 {\Pr(X_{in} = c_{in}m_{in}+s, X_{jn} = c_{jn}m_{jn}-t)} \\
		\leq \left( 1-\frac{s}{m_{in}-c_{in}m_{in}} \right) \left( 
		1-\frac{t}{c_{jn}m_{jn}} \right)
		\leq \exp\left( -\frac{s+t}{n} \right).
	\end{multline*}
	It follows by iteration of this inequality, that for \emph{all} real 
	$s,t\geq0$ and all integers~$u\geq0$,
	\begin{multline}\label{eqn:centrebound}
		\Pr(X_{in} = c_{in}m_{in}+s+u, X_{jn} = c_{jn}m_{jn}-t-u) \\
		\leq \exp\left( -\frac{(s+t)u}n \right) \Pr(X_{in} = c_{in}m_{in}+s, 
		X_{jn} = c_{jn}m_{jn}-t).
	\end{multline}

	Now fix~$i$, and observe that for all integers~$r>0$,
	\begin{multline*}
		\Pr(X_{in} \geq c_{in}m_{in} + Mr, \Sigma_n = k_n) \\
		= \sum_{\substack{\ell_1,\dots,\ell_M\in\N_0\colon \\ 
		\ell_1+\dots+\ell_M = k_n}} \!\! \I(\ell_i \geq c_{in}m_{in} + Mr) 
		\Pr(X_{kn} = \ell_k\ \forall k).
	\end{multline*}
	But if $\ell_1+\dots+\ell_M = k_n$ and $\ell_i \geq c_{in}m_{in} + Mr$, 
	then there must be some $j\neq i$ such that $\ell_j \leq c_{jn}m_{jn}-r$. 
	Therefore,
	\begin{multline*}
		\Pr(X_{in} \geq c_{in}m_{in} + Mr, \Sigma_n = k_n) \\
		\leq \sum_{j=1}^M \sum_{\substack{\ell_1,\dots,\ell_M\in\N_0\colon \\
		\ell_1+\dots+\ell_M = k_n}} \!\! \I\left( \substack{\textstyle \ell_i 
		\geq c_{in}m_{in} + Mr\\ \textstyle \ell_j \leq c_{jn}m_{jn}-r} 
		\right) \Pr(X_{kn} = \ell_k\ \forall k).
	\end{multline*}
	By independence of the~$X_{in}$ and using~\eqref{eqn:centrebound} with 
	$s=(M-1)r$, $t=0$ and $u=r$, we now obtain
	\begin{multline*}
		\Pr(X_{in} \geq c_{in}m_{in} + Mr, \Sigma_n = k_n) \\
		\begin{aligned}
			&\leq e^{-(M-1)r^2/n } \sum_{j=1}^M 
			\sum_{\substack{\ell_1,\dots,\ell_M\in\N_0\colon \\ 
			\ell_1+\dots+\ell_M = k_n}} \!\! \I\left( \substack{\textstyle 
			\ell_i \geq c_{in}m_{in} + Mr-r\\ \textstyle \ell_j \leq 
			c_{jn}m_{jn}} \right) \Pr(X_{kn} = \ell_k\ \forall k) \\
			&\leq Me^{-(M-1)r^2/n } \Pr(\Sigma_n = k_n).
		\end{aligned}
	\end{multline*}
	This proves that
	\[
		\Pr(X_{in} \geq c_{in}m_{in} + Mr\mid \Sigma_n = k_n)
		\leq Me^{-(M-1)r^2/n }.
	\]
	Similarly, one can prove that
	\[
		\Pr(X_{in} \leq c_{in}m_{in} - Mr\mid \Sigma_n = k_n)
		\leq Me^{-(M-1)r^2/n }. \qedhere
	\]
\end{proof}

As we have already mentioned, we expect that the~$X_{in}$ have fluctuations 
around their centres of the order~$\sqrt n$. It is therefore natural to look 
at the $M$-dimensional vector
\begin{equation}\label{eqn:defXnproof}
	\Xscaled_n := \left( \frac{X_{1n}-x_{1n}} {\sqrt n}, \frac{X_{2n}-x_{2n}} 
	{\sqrt n}, \dots, \frac{X_{Mn}-x_{Mn}} {\sqrt n} \right),
\end{equation}
where the vector $x_n = (x_{1n},\dots,x_{Mn})$ represents the centre around 
which the~$X_{in}$ concentrate. To prove weak convergence of~$\Xscaled_n$, we 
will not set $x_{in}$ equal to $c_{in} m_{in}$, because the latter numbers are 
not necessarily integer, and it will be more convenient if the~$x_{in}$ are 
integers. So instead, for each fixed~$n$, we choose the~$x_{in}$ to be 
nonnegative integers such that $\abs{x_{in} - c_{in} m_{in}} < 1$ for all~$i$, 
and $\sum_{i=1}^M x_{in} = k_n$. Of course, the vector~$\Xscaled_n$ as it is 
defined in~\eqref{eqn:defXnproof}, and the vector defined 
in~\eqref{eqn:defXnintro} have the same weak limit. In our proofs of Theorems 
\ref{thm:limitgivensuccesses} and~\ref{thm:limitabovemean}, $\Xscaled_n$ will 
refer to the vector defined in~\eqref{eqn:defXnproof}.

If we condition on $\{\Sigma_n = k_n\}$, then the vector~$\Xscaled_n$ will 
only take values in the hyperplane
\[
	S_0 := \{(z_1,\dots,z_M) \in \R^M\colon z_1+\dots+z_M = 0\}.
\]
However, as we have already explained in the introduction, we still 
regard~$\Xscaled_n$ as an $M$-dimensional vector, because we will also 
condition on $\{\Sigma_n \geq k_n\}$, in which case~$\Xscaled_n$ is not 
restricted to a hyperplane. To deal with this, it turns out that for technical 
reasons which will become clear later, it is useful to introduce the 
projection $\pi\colon (z_1,\dots,z_M) \mapsto (z_1,\dots,z_{M-1})$ and the 
shear transformation $\sigma\colon (z_1,\dots,z_M) \mapsto (z_1,\dots,z_{M-1}, 
z_1+\dots+z_M)$. We can then define a metric~$\rho$ on~$\R^M$ by setting 
$\rho(x,y) := \abs{\sigma x-\sigma y}$, where $\abs{\cdot}$ denotes Euclidean 
distance. See Figure~\ref{fig:ShearTransform} for an illustration.

\begin{figure}
	\begin{center}
		\includegraphics[scale=1.2]{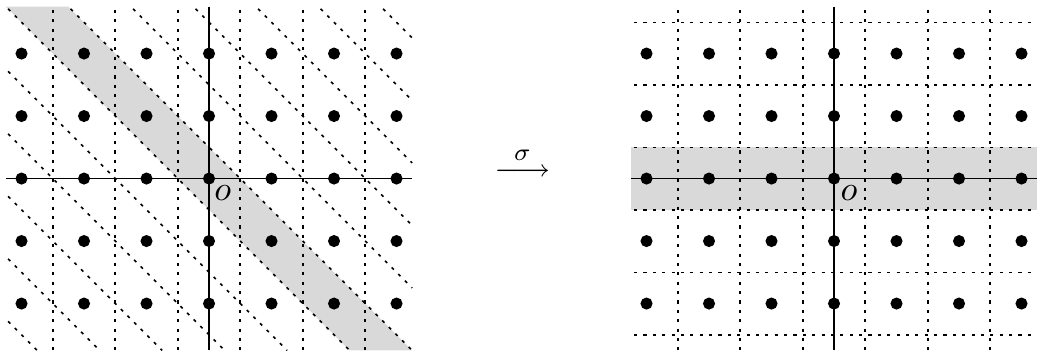}
	\end{center}
	\caption{The shear transformation~$\sigma$ (illustrated here for $M=2$) 
	maps sheared cubes to cubes. The dots are the sites of the integer 
	lattice~$\Z^2$. The gray band on the left encompasses those sheared cubes 
	that intersect~$S_0$.}
	\label{fig:ShearTransform}
\end{figure}

Using the projection~$\pi$, we now define a new measure~$\mu_0$ on the Borel 
subsets of~$\R^M$, which is concentrated on~$S_0$, by
\[
	\mu_0(\,\cdot\,) := \lambda^{M-1}(\pi(\,\cdot\,\cap S_0)),
\]
where $\lambda^{M-1}$ is the ordinary Lebesgue measure on~$\R^{M-1}$. Note 
that up to a multiplicative constant, $\mu_0$ is equal to the measure~$\nu_0$ 
defined in Section~\ref{sec:weakConvergence}, so we could have stated Theorems 
\ref{thm:limitgivensuccesses} and~\ref{thm:limitabovemean} equally well with 
$\mu_0$ instead of~$\nu_0$. In the proofs it turns out to be more convenient 
to work with~$\mu_0$, however, so that is what we shall do.

Our proofs of Theorems \ref{thm:limitgivensuccesses} 
and~\ref{thm:limitabovemean} resemble classical arguments to prove weak 
convergence of random vectors living on a lattice via a local limit theorem 
and Scheff\'e's theorem, see for instance \cite[Theorem~3.3]{billingsley}. 
However, we cannot use these classic results here, for two reasons. First of 
all, in Theorem~\ref{thm:limitabovemean} our random vectors live on an 
$M$-dimensional lattice, but in the limit all the mass collapses onto a 
lower-dimensional hyperplane, leading to a weak limit which is singular with 
respect to $M$-dimensional Lebesgue measure. The classic arguments do not 
cover this case of a singular limit.

Secondly, we are considering \emph{conditioned} random vectors, for which it 
is not so obvious how to obtain a local limit theorem directly. Our solution 
is to get rid of the conditioning by considering \emph{ratios} of conditioned 
probabilities, and prove a local limit theorem for these ratios. An extra 
argument will then be needed to prove weak convergence. Since we cannot resort 
to classic arguments here, we have to go through the proofs in considerable 
detail.

\subsection[Proof Theorem~\ref{thm:limitgivensuccesses}]{Proof of 
Theorem~\ref{thm:limitgivensuccesses}}

As we have explained above, the key idea in the proof of 
Theorem~\ref{thm:limitgivensuccesses} is that we can get rid of the awkward 
conditioning by considering \emph{ratios} of conditional probabilities, rather 
than the conditional probabilities themselves. Thus, we will be dealing with 
ratios of binomial probabilities, and the following lemma addresses the key 
properties of these ratios needed in the proof. The lemma resembles standard 
bounds on binomial probabilities, but we point out that here we are 
considering \emph{ratios} of binomial probabilities which centre around 
$c_{in} m_{in}$ rather than around the mean~$p_i m_{in}$. We also note that 
actually, the lemma is stronger than required to prove 
Theorem~\ref{thm:limitgivensuccesses}, but we will need this stronger result 
to prove Theorem~\ref{thm:limitabovemean} later.

\begin{lemma}\label{lem:binomialratios}
	Recall the definition~\eqref{eqn:An} of~$A_n$. Fix $i\in \{1,2,\dots,M\}$ 
	and let $b_1, b_2, \dotsc$ be a sequence of positive integers such that 
	$b_n/\sqrt{n} \to 0$ as $n\to\infty$. Then, for every $z\in\R$,
	\[
		\sup_{\substack{x\colon \abs{x-x_{in}} < b_n\\ r\colon 
		\abs{r-z\sqrt{n}} < b_n}} \left| \frac1{A_n^r} \frac{\Pr(X_{in} = 
		x+r)}{\Pr(X_{in} = x)} - \exp\left( -\frac{z^2}{2c_i(1-c_i)\alpha_i} 
		\right) \right| \to 0.
	\]
	Furthermore, there exist constants $B^1_i, B^2_i < \infty$ such that for 
	all $n$ and~$r$,
	\[
		\sup_{x\colon \abs{x-x_{in}} < b_n} \frac1{A_n^r} \frac{\Pr(X_{in} = 
		x+r)}{\Pr(X_{in} = x)} \leq B^1_i \left(1 + \frac{r^4}{n^2} \right) 
		\exp\left( B^2_i \frac{\abs{r}}{\sqrt n} - \frac12 \frac{r^2}{n} 
		\right).
	\]
\end{lemma}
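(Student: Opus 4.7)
The plan is to reduce the ratio to a product amenable to logarithmic analysis. A direct computation from the binomial form of $\Pr(X_{in}=x)$ combined with the defining relation $p_i/(1-p_i)=A_n c_{in}/(1-c_{in})$ yields, for $r\geq 1$,
\[
\frac{1}{A_n^r}\frac{\Pr(X_{in}=x+r)}{\Pr(X_{in}=x)}=\prod_{j=1}^{r}\frac{1-(\delta+j-1)/\beta_n}{1+(\delta+j)/\alpha_n},
\]
where $\alpha_n=c_{in}m_{in}$, $\beta_n=(1-c_{in})m_{in}$, and $\delta=x-c_{in}m_{in}$ satisfies $|\delta|<b_n+1$; the case $r\leq 0$ is handled analogously by inversion. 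The factor $1/A_n^r$ is the right normalization because it cancels $p_i/(1-p_i)$ and leaves a product of factors close to~$1$, ready for Taylor expansion.

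For the first claim, with $r$ within $b_n$ of $z\sqrt n$, I would Taylor-expand each log-factor to second order. Summing the linear contributions and using the identity $1/\alpha_n+1/\beta_n=1/[c_{in}(1-c_{in})m_{in}]$ produces a leading term $-r^2/[2c_{in}(1-c_{in})m_{in}]$, which converges to $-z^2/[2c_i(1-c_i)\alpha_i]$ because $c_{in}\to c_i$, $m_{in}/n\to\alpha_i$ and $r/\sqrt n\to z$. The remaining contributions vanish uniformly over the supremum: the cross term is of order $r|\delta|/n=O(b_n/\sqrt n)$, the boundary term is of order $r/n=O(1/\sqrt n)$, and the cubic Taylor remainder is of order $r^3/n^2=O(1/\sqrt n)$. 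Exponentiating and invoking continuity of $\exp$ then gives the desired uniform convergence.

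For the universal upper bound I would replace the two-sided expansion by the one-sided inequalities $\log(1-u)\leq -u$ (for $u<1$) and $-\log(1+v)\leq -v+v^2/2$ (for $v\geq 0$), applied termwise. When $\delta\geq 0$ both apply for every $j\in\{1,\ldots,r\}$, the constraint $u<1$ being automatic in the only nontrivial range $x+r\leq m_{in}$. When $\delta<0$, the at most $b_n+2$ indices $j$ with $\delta+j<0$ are handled by a crude uniform bound that contributes a factor absorbed into $B^1_i$. Summing what remains reproduces the main term $-r^2/[2c_{in}(1-c_{in})m_{in}]$ together with errors of order $r|\delta|/n$, $r/n$ and $r^3/\alpha_n^2$. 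One then weakens the main term to $-r^2/(2n)$ (using $c_{in}(1-c_{in})m_{in}\leq n$), absorbs $r|\delta|/n\leq(b_n/\sqrt n)|r|/\sqrt n$ and $r/n$ into $B^2_i|r|/\sqrt n$, and uses $r^3/n^2\leq 1+r^4/n^2$ to accommodate the cubic remainder inside the prefactor $1+r^4/n^2$.

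The main obstacle is precisely this universality in $r$: Taylor-based estimates only control the expansion when $|r|/\sqrt n$ is bounded, so the upper bound must rest on global one-sided logarithmic inequalities rather than on a Taylor remainder estimate. The polynomial prefactor $1+r^4/n^2$ in the statement is there exactly to absorb the cubic-in-$r$ error that arises when the one-sided inequality $-\log(1+v)\leq-v+v^2/2$ is summed across all $j$; the weakened exponent $-r^2/(2n)$ in the bound is the price one pays in order to have a clean, $i$-independent expression that works for every $r$.
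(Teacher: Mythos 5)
Your algebraic starting point is correct and clean: with $\delta = x - c_{in}m_{in}$, $\alpha_n = c_{in}m_{in}$, $\beta_n = (1-c_{in})m_{in}$ one does get
\[
\frac{1}{A_n^r}\frac{\Pr(X_{in}=x+r)}{\Pr(X_{in}=x)}=\prod_{j=1}^{r}\frac{1-(\delta+j-1)/\beta_n}{1+(\delta+j)/\alpha_n},
\]
and the first half of the lemma follows from a two-sided Taylor expansion of the log exactly as you describe (the sum of the linear terms gives $-\tfrac{r^2}{2c_{in}(1-c_{in})m_{in}}$ via $1/\alpha_n+1/\beta_n=1/(c_{in}(1-c_{in})m_{in})$, and the cross, boundary, and higher-order terms are $o(1)$ uniformly over the supremum). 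This is a valid alternative to the paper's Stirling/Robbins decomposition into factors $P^1_{in}P^2_{in}P^3_{in}$ for the pointwise limit.

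The second half, however, has a genuine gap. The inequality $-\log(1+v)\le -v+v^2/2$ (for $v\ge 0$), summed over $j=1,\dots,r$, produces in the exponent roughly
\[
-\frac{r^2}{2\alpha_n}-\frac{r^2}{2\beta_n}+\frac{r^3}{6\alpha_n^2},
\]
and the positive $r^3/\alpha_n^2$ error is \emph{additive in the exponent}, not a prefactor. You propose to absorb it into the prefactor via ``$r^3/n^2\le 1+r^4/n^2$'', but that bounds the exponent, not the exponential; what would actually be needed is $\exp(\text{const}\cdot r^3/n^2)\le B^1_i(1+r^4/n^2)$, which fails badly once $r$ is comparable to $n$. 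More concretely, the range of admissible $r$ extends up to $m_{in}-x\approx\beta_n$, which is of order $n$; if $\rho:=\beta_n/\alpha_n=(1-c_{in})/c_{in}$ exceeds $(3+\sqrt{21})/2\approx 3.8$ (i.e.\ $c_i$ is small enough), then at $r\approx\beta_n$ your upper estimate $\alpha_n(-\rho/2-\rho^2/2+\rho^3/6)$ is \emph{positive} of order $n$, while the required right-hand side $-r^2/(2n)+B^2_i|r|/\sqrt{n}+\log(\text{poly})$ is negative of order $n$. So the chain of inequalities breaks even though the quantity you started from is (by the lemma) actually small. The root cause is that a second-order one-sided Taylor bound for $\log(1+v)$ is far too lossy when $v$ is not small, and $v_j$ can be as large as $\beta_n/\alpha_n$. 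This is precisely what the paper's Stirling factorization avoids: there the Gaussian factor $P^3_{in}(x,r)$ is sandwiched between $\exp(-\tfrac12(m_{in}\mp r)r^2/\dots)$, which gives the clean bound $P^3_{in}\le\exp(-r^2/(2m_{in}))$ with \emph{no} additive error term in the exponent; the $\exp(B^2_i|r|/\sqrt n)$ correction comes from the separate factor $P^2_{in}$, and the polynomial prefactor $1+r^4/n^2$ comes entirely from the Stirling correction $P^1_{in}$ and has nothing to do with a Taylor remainder. To salvage the telescoping approach for the universal bound you would need a sharper inequality such as $\log(1+v)\ge v - v^2/(2(1+v))$ and a careful closed-form evaluation of the resulting sum, which essentially reproduces the paper's $P^3$ estimate.

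As a smaller remark, your treatment of the indices $j$ with $\delta+j<0$ (when $\delta<0$) needs a word of justification: there are up to $b_n+2$ such $j$, so bounding each factor by a constant gives a factor growing like $C^{b_n}$, which cannot be absorbed into $B^1_i$. What saves it is that for these $j$ the factors are $1+O(b_n/n)$, so their product is $\exp(O(b_n^2/n))\to 1$. That observation should be made explicit.
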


\begin{proof}
	Robbins' note on Stirling's formula~\cite{robbins} states that for all $m 
	= 1,2,\dotsc$,
	\[
		\sqrt{2\pi} \, m^{m+1/2} \, e^{-m + 1/(12m+1)} < m! <
		\sqrt{2\pi} \, m^{m+1/2} \, e^{-m + 1/(12m)},
	\]
	from which it is straightforward to show that for all $m = 0,1,2,\dotsc$ 
	(so including $m=0$), there exists an~$\eta_m$ satisfying $1/7<\eta_m<1/5$ 
	such that
	\begin{equation}\label{eqn:stirling}
		m! = \sqrt{2\pi(m+\eta_m)} \, m^m \, e^{-m}
		= \sqrt{2\pi\factrnd{m}} \, m^m \, e^{-m},
	\end{equation}
	where we have introduced the notation $\factrnd{m} := m+\eta_m$.
	
	Since $X_{in}$ has the binomial distribution with parameters $m_{in}$ 
	and~$p_i$,
	\[
		\frac1{A_n^r} \frac{\Pr(X_{in} = x+r)}{\Pr(X_{in} = x)}
		= \frac{x!}{(x+r)!} \, \frac{(m_{in}-x)!}{(m_{in}-x-r)!} \left(
		\frac{c_{in}}{1-c_{in}} \right)^r.
	\]
	Using~\eqref{eqn:stirling}, we can write this as the product of the three 
	factors
	\begin{align*}
		P^1_{in}(x,r) &= \left( \frac{\factrnd{x}} {\factrnd{x+r}} \:
		\frac{\factrnd{m_{in}-x}} {\factrnd{m_{in}-x-r}} \right)^{1/2} \\
		P^2_{in}(x,r) &= \left( \frac{c_{in}m_{in}} {x}
		\frac{m_{in}-x}{m_{in}-c_{in}m_{in}} \right)^r \\
		P^3_{in}(x,r) &= \left( \frac{x} {x+r} \right)^{x+r} \!\! \left(
		\frac{m_{in}-x} {m_{in}-x-r} \right)^{m_{in}-x-r}
	\end{align*}
	for all $x$ and~$r$ such that $0<x<m_{in}$ and $0\leq x+r \leq m_{in}$.

	To study the convergence of $P^3_{in}(x,r)$, first write
	\[
		P^3_{in}(x,r) = \left( 1-\frac{r}{x+r} \right)^{x+r} \!\! \left( 
		1+\frac{r}{m_{in}-x-r} \right)^{m_{in}-x-r}.
	\]
	Using the fact that for all $u>-1$, $(1+u)$ lies between $\exp\bigl( 
	u-\tfrac12 u^2 \bigr)$ and $\exp\bigl( u-\tfrac12 u^2/(1+u) \bigr)$, a 
	little computation now shows that $P^3_{in}(x,r)$ is wedged in between
	\[
		\exp\left( -\frac12 \frac{(m_{in}-r)r^2}{x(m_{in}-x-r)} \right)
		\quad\text{and}\quad
		\exp\left( -\frac12 \frac{(m_{in}+r)r^2}{(x+r)(m_{in}-x)} \right).
	\]
	From this fact, it follows that for fixed $z\in\R$,
	\[
		\sup_{\substack{x\colon \abs{x-x_{in}} < b_n\\ r\colon 
		\abs{r-z\sqrt{n}} < b_n}} \left| P^3_{in}(x,r) - \exp\left( 
		-\frac{z^2} {2c_i(1-c_i)\alpha_i} \right) \right| \to 0,
	\]
	because $x_{in}/m_{in} \to c_i$, hence $x = c_im_{in}+o(n)$ and $r = 
	z\sqrt{n}+o(\sqrt{n})$ under the supremum, and $m_{in}/n \to \alpha_i$. 
	Since $\abs{x_{in} - c_{in}m_{in}} < 1$, we also have that
	\[
		\sup_{\substack{x\colon \abs{x-x_{in}} < b_n\\ r\colon 
		\abs{r-z\sqrt{n}} < b_n}} \left| P^1_{in}(x,r)-1 \right| \to 0
		\quad \text{and} \quad
		\sup_{\substack{x\colon \abs{x-x_{in}} < b_n\\ r\colon 
		\abs{r-z\sqrt{n}} < b_n}} \left| P^2_{in}(x,r)-1 \right| \to 0.
	\]
	Together with the uniform convergence of $P^3_{in}(x,r)$, this establishes 
	the first part of Lemma~\ref{lem:binomialratios}.

	We now turn to the second part of the lemma. If $x$ and~$r$ are such that 
	$0<x<m_{in}$ and $0\leq x+r \leq m_{in}$, then $m_{in}-r \geq x > 0$ and 
	$m_{in}+r \geq m_{in}-x > 0$, hence from the bounds on $P^3_{in}(x,r)$ 
	given in the previous paragraph we can conclude that
	\[
		P^3_{in}(x,r) \leq \exp\left( -\frac12 \frac{r^2}{m_{in}} \right)
		\leq \exp\left( -\frac12 \frac{r^2}{n} \right).
	\]
	Next observe that if $x$ is such that $\abs{x-x_{in}} < b_n$, then $\abs{x 
	- c_{in}m_{in}} < 1+b_n$, from which it follows that uniformly in~$n$, for 
	all~$x$ and~$r$ such that $0 < x < m_{in}$, $0\leq x+ r\leq m_{in}$ and 
	$\abs{x-x_{in}} < b_n$,
	\[
		P^2_{in}(x,r)
		\leq \left(1 + \text{const.} \times \frac{b_n}{n} \right)^{\abs{r}}
		\leq \exp\left( \text{const.} \times \frac{\abs{r}}{\sqrt{n}} \right).
	\]

	To finish the proof, it remains to bound~$P^1_{in}(x,r)$. To this end, 
	observe first that uniformly in~$n$, for all $x$ and~$r$ such that 
	$\abs{x-x_{in}} < b_n$ and $\abs{r} < n^{3/4}$, $P^1_{in}(x,r)$ is bounded 
	by a constant. On the other hand, uniformly for all $x$ and~$r$ such that 
	$0<x<m_{in}$ and $0\leq x+r\leq m_{in}$, $P^1_{in}(x,r)$ is bounded by a 
	constant times~$n$, and $n \leq r^4/n^2$ if $\abs{r} \geq n^{3/4}$. 
	Combining these observations, we see that uniformly in~$n$, for all $x$ 
	and~$r$ satisfying $\abs{x-x_{in}} < b_n$ and $0\leq x+r\leq m_{in}$,
	\[
		P^1_{in}(x,r)
		\leq \text{const.} \times \left(1 + \frac{r^4}{n^2} \right).
		\qedhere
	\]
\end{proof}

\begin{proof}[Proof of Theorem~\ref{thm:limitgivensuccesses}]
	For a point~$z$ in~$\R^M$, let $\round{z}$ be the point in~$\Z^M$ 
	$\rho$-closest to~$z$ (take the lexicographically smallest one if there is 
	a choice). Graphically, this means that the collection of those points~$z$ 
	for which $\round{z} = a$ comprises the sheared cube $a + \sigma^{-1} 
	(-1/2,1/2]^M$, see Figure~\ref{fig:ShearTransform}. Now, for each fixed 
	$z\in \R^M$, set $r^z_n = (r^z_{1n}, \dots, r^z_{Mn}) := \round{z\sqrt 
	n}$. Observe that because (for fixed~$n$) the~$x_{in}$ sum to~$k_n$, if 
	$r^z_n \in S_0$ we have that
	\begin{equation}\label{eqn:ratios}
		\frac{\Pr(\sqrt{n} \, \Xscaled_n = r^z_n \mid \Sigma_n = k_n)} 
		{\Pr(\sqrt{n} \, \Xscaled_n = 0 \mid \Sigma_n = k_n)}
		= \frac{\Pr(\sqrt{n} \, \Xscaled_n = r^z_n)} {\Pr(\sqrt{n} \, 
		\Xscaled_n = 0)}
		= \prod_{i=1}^M \frac{\Pr(X_{in} = x_{in} + r^z_{in})}{\Pr(X_{in} = 
		x_{in})},
	\end{equation}
	where we have used the independence of the components~$X_{in}$. If $r^z_n 
	\notin S_0$, on the other hand, this ratio obviously vanishes.

	We now apply Lemma~\ref{lem:binomialratios} to~\eqref{eqn:ratios}, taking 
	$b_n = M$ for every $n\geq 1$. Since $\sum_{i=1}^M r^z_{in} = 0$ if $r^z_n 
	\in S_0$ and hence $\prod_{i=1}^M A_n^{r^z_{in}} = 1$, the first part of 
	Lemma~\ref{lem:binomialratios} immediately implies that for all $z\in 
	\R^M$,
	\[
		\frac{\Pr(\sqrt{n} \, \Xscaled_n = r^z_n \mid \Sigma_n = k_n)} 
		{\Pr(\sqrt{n} \, \Xscaled_n = 0 \mid \Sigma_n = k_n)}
		\to \I_{S_0}(z) \prod_{i=1}^M \exp\left( -\frac{z_i^2} 
		{2c_i(1-c_i)\alpha_i} \right)
		= f(z)
	\]
	as  $n \to \infty$. To see how this will lead to 
	Theorem~\ref{thm:limitgivensuccesses}, define $f_n\colon \R^M\to\R$ by
	\[
		f_n(z) := (\sqrt n)^M \Pr\bigl( \sqrt{n} \, \Xscaled_n = r^z_n \bigm| 
		\Sigma_n = k_n \bigr).
	\]
	Then~$f_n$ is a probability density function with respect to 
	$M$-dimensional Lebesgue measure~$\lambda$. Moreover, if $\Zscaled_n$ is a 
	random vector with this density, then the vector $\Zscaled'_n = 
	\round{\Zscaled_n\sqrt n} / \sqrt{n}$ has the same distribution as the 
	vector~$\Xscaled_n$, conditioned on $\{\Sigma_n = k_n\}$. Since clearly 
	$\Zscaled_n$ and~$\Zscaled'_n$ must have the same weak limit, it is 
	therefore sufficient to show that the weak limit of~$\Zscaled_n$ has 
	density $f / \int f\,d\mu_0$ with respect to~$\mu_0$.
	
	Now, by what we have established above, we already know that
	\[
		\frac{f_n(z)}{f_n(0)} = \frac{\Pr(\sqrt{n} \, \Xscaled_n = r^z_n \mid 
		\Sigma_n = k_n)} {\Pr(\sqrt{n} \, \Xscaled_n = 0 \mid \Sigma_n = k_n)}
		\to f(z) \qquad \text{for every $z\in\R^M$}.
	\]
	Moreover, the second part of Lemma~\ref{lem:binomialratios} applied 
	to~\eqref{eqn:ratios} shows that the ratios $f_n(z) / f_n(0)$ are 
	uniformly bounded by some $\mu_0$-integrable function~$g(z)$. Thus it 
	follows by dominated convergence that for every Borel set $A\subset \R^M$,
	\[
		\int_A \frac{f_n(z)}{f_n(0)}\, d\mu_0(z) \to \int_A f(z)\, d\mu_0(z).
	\]

	Next observe that $1 = \int f_n\, d\lambda = \int n^{-1/2}f_n\, d\mu_0$, 
	because by the conditioning, $f_n$ is nonzero only on the sheared cubes 
	which intersect~$S_0$. Therefore, taking $A = \R^M$ in the previous 
	equation yields $n^{-1/2} f_n(0) \to (\int f\, d\mu_0)^{-1}$, which in 
	turn implies that for every Borel set~$A$,
	\[
		\int_A n^{-1/2}f_n(z)\, d\mu_0(z) \to \frac{\int_A f(z)\, d\mu_0(z)} 
		{\int f\, d\mu_0}.
	\]
	In general, $\int_F f_n\, d\lambda \neq \int_F n^{-1/2}f_n\, d\mu_0$ for 
	an arbitrary Borel set~$F$, but we have equality here for sufficiently 
	large~$n$ if~$F$ is a finite union of sheared cubes. Hence, if~$A$ is 
	open, we can approximate~$A$ from the inside by unions of sheared cubes 
	contained in~$A$ to conclude that
	\[
		\liminf_{n\to\infty} \int_A f_n(z)\, d\lambda(z) \geq \frac{\int_A 
		f(z)\, d\mu_0(z)} {\int f\, d\mu_0}. \qedhere
	\]
\end{proof}

\subsection[Proof Theorem~\ref{thm:limitabovemean}]{Proof of 
Theorem~\ref{thm:limitabovemean}}

We now turn to the case where we condition on $\{\Sigma_n\geq k_n\}$, for the 
same fixed sequence $k_n\to \infty$ as before. To treat this case, we are 
going to consider what happens when we condition on the event that $\Sigma_n = 
k_n+\ell$ for some $\ell\geq0$, and later sum over~$\ell$. It will be 
important for us to know the relevant range of~$\ell$ to sum over. In 
particular, for large enough~$\ell$ we expect that the probability 
$\Pr(\Sigma_n = k_n+\ell)$ will be so small, that these~$\ell$ will not 
influence the conditional distribution of the vector~$\Xscaled_n$ in an 
essential way. The relevant range of~$\ell$ can be determined from the 
following lemma:

\begin{lemma}\label{lem:rangeofell}
	For all positive integers~$s$,
	\[
		\Pr( \Sigma_n \geq k_n + 2Ms ) \leq M \exp\left( -\frac{ 
		(k_n-\Ex(\Sigma_n)+Ms) s }{Mn} \right) \Pr( \Sigma_n \geq k_n ).
	\]
\end{lemma}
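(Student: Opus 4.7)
The plan is to mimic the shifting strategy used in Lemma~\ref{lem:centres}. For a configuration $(\ell_1,\dots,\ell_M)$ with $\sum_i \ell_i = m \geq k_n + 2Ms$, the pigeonhole principle forces some block~$j$ to satisfy
\[
	\ell_j - p_j m_{jn} \geq \frac{m - \Ex(\Sigma_n)}{M} \geq \frac{k_n - \Ex(\Sigma_n) + 2Ms}{M};
\]
decreasing $\ell_j$ by $2Ms$ produces a configuration in $\{\Sigma_n = m - 2Ms\} \subseteq \{\Sigma_n \geq k_n\}$. Partitioning source configurations into $M$ classes according to which block realises this pigeonhole bound (ties broken by index) and observing that each target configuration has at most $M$ preimages across these classes, the proof reduces to controlling the probability ratio of a source versus its shifted image.

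The key ingredient is a single-block binomial ratio bound. From $\Pr(X_{jn} = y+1)/\Pr(X_{jn} = y) = (m_{jn}-y)p_j/((y+1)(1-p_j))$, using $1-z \leq e^{-z}$ and $m_{jn}(1-p_j) \leq n$, I would deduce for $y \geq p_j m_{jn}$
\[
	\frac{\Pr(X_{jn} = y+1)}{\Pr(X_{jn} = y)} \leq 1 - \frac{y - p_j m_{jn}}{m_{jn}(1-p_j)} \leq \exp\left(-\frac{y - p_j m_{jn}}{n}\right).
\]
Iterating this along the shift $\ell_j \mapsto \ell_j - 2Ms$ gives
\[
	\frac{\Pr(X_{jn} = \ell_j)}{\Pr(X_{jn} = \ell_j - 2Ms)} \leq \exp\left(-\frac{1}{n}\sum_{i=1}^{2Ms}(\ell_j - i - p_j m_{jn})\right),
\]
and telescoping the arithmetic series against the pigeonhole overshoot lower bound produces, with the midpoint $Ms$ emerging as the mean of the shifted positions, the exponent $(k_n - \Ex(\Sigma_n) + Ms)s/(Mn)$. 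Summing the resulting bound over all source configurations and then over $m \geq k_n + 2Ms$ gives the stated inequality, using that $\sum_{m \geq k_n + 2Ms}\Pr(\Sigma_n = m - 2Ms) = \Pr(\Sigma_n \geq k_n)$.

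The main obstacle is the exponent bookkeeping: one must verify that the sum $\sum_{i=1}^{2Ms}(\ell_j - i - p_j m_{jn})$, combined with the pigeonhole estimate $\ell_j - p_j m_{jn} \geq (m - \Ex(\Sigma_n))/M$ for $m \geq k_n + 2Ms$, is at least $2s(k_n - \Ex(\Sigma_n) + Ms)/M$ for every admissible $m$. The factor $M$ in the conclusion absorbs the loss from the union bound over the $M$ partition classes. A secondary concern is that when $k_n$ lies sufficiently below $\Ex(\Sigma_n)$, or when the overshoot in block $j$ is not large enough to keep $\ell_j - 2Ms$ above $p_j m_{jn}$, the single-step bound degenerates; in those regimes the target exponent $(k_n - \Ex(\Sigma_n) + Ms)s/(Mn)$ is small or negative, and the inequality is automatic since $\Pr(\Sigma_n \geq k_n + 2Ms) \leq \Pr(\Sigma_n \geq k_n)$ and the prefactor $M\exp(\cdot)$ already exceeds~$1$.
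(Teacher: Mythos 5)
Your overall strategy (pigeonhole to isolate a block with large overshoot, then shift and compare ratios) matches the paper's, but the shift size $2Ms$ is too large and this creates a genuine gap. The paper shifts the designated block down by only $s$, using a constant floor $u = (k_n - \Ex(\Sigma_n))/M + s$ in the single-step ratio bound; the pigeonhole estimate $\ell_k - p_k m_{kn} \geq (k_n - \Ex(\Sigma_n))/M + 2s$ then guarantees precisely that all $s$ intermediate values remain $\geq p_k m_{kn} + u$, and the exponent $us/n$ is exactly the one in the statement. In contrast, your shift of $2Ms$ needs the overshoot to exceed $2Ms$ so that every iterate stays above $p_j m_{jn}$, but for $M \geq 2$ the pigeonhole only delivers overshoot $\geq (k_n - \Ex(\Sigma_n) + 2Ms)/M$, which is strictly smaller than $2Ms$ unless $k_n - \Ex(\Sigma_n) \geq 2M(M-1)s$. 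When the overshoot falls short, some iterates drop below the mean and your single-step bound $\Pr(X_{jn}=y+1)/\Pr(X_{jn}=y) \leq e^{-(y-p_jm_{jn})/n}$ fails (the ratio is $>1$ there, and in fact for $y$ well below $p_jm_{jn}$ the binomial ratio can greatly exceed your upper bound).

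Your fallback ("the bound is automatic when the prefactor $M\exp(\cdot)$ exceeds 1") covers only the regime $(k_n - \Ex(\Sigma_n) + Ms)s \leq Mn\log M$, which does not include the whole problematic regime: for $s$ of order $\sqrt{n}$ and larger, the iteration-validity constraint and the fallback constraint leave a nonempty window in between where neither applies, yet the lemma must hold for all positive integers $s$. Separately, even when the iteration is valid, the bookkeeping does not close: plugging the pigeonhole lower bound into $\sum_{i=1}^{2Ms}(\ell_j - i - p_j m_{jn})$ gives $2s(k_n - \Ex(\Sigma_n)) + s^2(4M-2M^2-1) - Ms$, which for $M \geq 2$ is less than the required $s(k_n - \Ex(\Sigma_n))/M + s^2$ unless $k_n - \Ex(\Sigma_n)$ is large compared to $s$; the "midpoint $Ms$" heuristic only matches the target for $M=1$. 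The fix is to adopt the paper's choice: shift by $s$, not $2Ms$, and use the constant-floor form of the ratio bound so that the pigeonhole estimate is used exactly once and exactly where it is needed.
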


\begin{proof}
	Let $u$ be such that $0<u<(1-p_i)m_{in}$. Observe that then, for all 
	integers~$m$ such that $p_i m_{in} + u \leq m \leq m_{in}$,
	\[
		\frac{\Pr(X_{in} = m+1)}{\Pr(X_{in} = m)}
		= \frac{m_{in}-m}{m+1} \frac{p_i}{1-p_i}
		\leq \frac{p_im_{in}-u\frac{p_i}{1-p_i}}{p_im_{in}+u},
	\]
	hence
	\[
		\frac{\Pr(X_{in} = m+1)}{\Pr(X_{in} = m)}
		\leq 1 - \frac{u}{p_im_{in}+u} \left( 1+\frac{p_i}{1-p_i} \right)
		\leq 1 - \frac{u}{m_{in}} \leq 1 - \frac{u}{n}.
	\]
	Since $1-z \leq \exp(-z)$, by repeated application of this inequality it 
	follows that for all $u>0$ and all positive integers~$t$, if $m$ is an 
	integer such that $m \geq p_i m_{in} + u$, then
	\begin{equation}\label{eqn:rangeofellbound}
		\Pr(X_{in} = m+t)
		\leq \exp\left( -\frac{ut}{n} \right) \Pr(X_{in} = m).
	\end{equation}

	Now observe that if $\Sigma_n \geq \Ex(\Sigma_n) + Mr + 2Ms$, where $s$ is 
	a positive integer, and $r$ a real number such that $r+s > 0$, then for 
	some~$k$ it must be the case that $X_{kn} \geq p_k m_{kn} + r + 2s$.
	Therefore,
	\begin{multline*}
		\Pr( \Sigma_n \geq \Ex(\Sigma_n) + Mr+2Ms ) \\
		\leq \sum_{\substack{ \ell_1,\dots,\ell_M\in \N_0\colon\\ 
		\ell_1+\dots+\ell_M \geq \Ex(\Sigma_n) + Mr+2Ms}} \sum_{k=1}^M 
		\I(\ell_k \geq p_k m_{kn} + r + 2s) \Pr(X_{in} = \ell_i\ \forall i).
	\end{multline*}
	But by~\eqref{eqn:rangeofellbound}, taking $u=r+s$ and $t=s$,
	\begin{multline*}
		\I(\ell_k \geq p_k m_{kn} + r + 2s) \Pr(X_{in} = \ell_i\ \forall i) \\
		\leq \exp\left( -\frac{(r+s)s}{n} \right) \Pr(X_{kn} = \ell_k-s, 
		X_{in} = \ell_i\ \forall i\neq k),
	\end{multline*}
	and therefore
	\begin{multline*}
		\Pr( \Sigma_n \geq \Ex(\Sigma_n) + Mr+2Ms ) \\
		\begin{aligned}
			&\leq M \exp\left( -\frac{(r+s)s}{n} \right) \Pr( \Sigma_n \geq 
			\Ex(\Sigma_n) + Mr+2Ms - s ) \\
			&\leq M \exp\left( -\frac{(r+s)s}{n} \right) \Pr( \Sigma_n \geq 
			\Ex(\Sigma_n) + Mr \bigr).
		\end{aligned}
	\end{multline*}
	Choosing~$r$ such that $k_n \equiv \Ex(\Sigma_n) + Mr$ yields 
	Lemma~\ref{lem:rangeofell} (observe that the bound holds trivially if 
	$r+s\leq0$).
\end{proof}

Lemma~\ref{lem:rangeofell} shows that if $\alpha > \sum_{i=1}^M p_i\alpha_i$, 
then for sufficiently large~$n$, $\Pr(\Sigma_n \geq k_n+\ell)$ will already be 
much smaller than $\Pr(\Sigma_n \geq k_n)$ when $\ell$ is of order $\log n$. 
However, when $\alpha = \sum_{i=1}^M p_i\alpha_i$, we need to consider $\ell$ 
of bigger order than~$\sqrt{n}$ for $\Pr(\Sigma_n \geq k_n+\ell)$ to become 
much smaller than $\Pr(\Sigma_n \geq k_n)$. In either case, 
Lemma~\ref{lem:rangeofell} shows that $\ell$ of larger order than~$\sqrt{n}$ 
become irrelevant.

Keeping this in mind, we will now look at the conditional distribution of the 
vector~$\Xscaled_n$, conditioned on $\{\Sigma_n = k_n + \ell\}$. The first 
thing to observe is that for $\ell>0$, the locations of the centres around 
which the components~$X_{in}$ concentrate will be shifted to larger values. 
Indeed, these centres are located at $c^\ell_{in} m_{in}$, where 
the~$c^\ell_{in}$ are of course determined by the system of equations
\begin{equation}\label{eqn:shiftedcentres}
	\left\{ \begin{aligned}
		&\frac{1-c^\ell_{in}}{c^\ell_{in}} \frac{p_i}{1-p_i}
		= \frac{1-c^\ell_{jn}}{c^\ell_{jn}} \frac{p_j}{1-p_j}
		&&& \forall i,j\in\{1,\dots,M\}; \\
		& \textstyle\sum_{i=1}^M c^\ell_{in} m_{in} = k_n + \ell.
	\end{aligned} \right.
\end{equation}
To find an explicit expression for the size of the shift $c^\ell_{in} - 
c_{in}$, we can substitute $c^\ell_{in} = c_{in} + \delta_{in}$ 
into~\eqref{eqn:shiftedcentres}, and then perform an expansion in powers of 
the correction~$\delta_{in}$ to guess this correction to first order. This 
procedure leads us to believe that~$c^\ell_{in}$ must be of the form
\begin{equation}\label{eqn:shift}
	c^\ell_{in} = c_{in} + c_{in}(1-c_{in}) d^\ell_n + e^\ell_{in},
\end{equation}
where
\[
	d^\ell_n := \frac{\ell}{\sum_{j=1}^M c_{jn}(1-c_{jn})m_{jn}},
\]
and $e^\ell_{in}$ should be a higher-order correction. The following lemma 
shows that the error terms~$e^\ell_{in}$ are indeed of second order 
in~$d^\ell_n$, so that the effective shift in~$c_{in}$ by adding~$\ell$ extra 
successes to our Bernoulli variables is given by $c_{in}(1-c_{in}) d^\ell_n$. 
For convenience, we assume in the lemma that $\abs{d^\ell_n} \leq 1/2$, which 
means that $\abs{\ell}$ cannot be too large, but by 
Lemma~\ref{lem:rangeofell}, this does not put too severe a restriction on the 
range of~$\ell$ we can consider later.

\begin{lemma}\label{lem:shiftedcentres}
	For all~$\ell$ (positive or negative) such that $\abs{d^\ell_n}\leq1/2$, 
	we have that $\abs{e^\ell_{in}} \leq (d^\ell_n)^2$ for all $i=1,\dots,M$.
\end{lemma}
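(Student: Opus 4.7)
The plan is to exploit the fact that the common-ratio equations in \eqref{eqn:shiftedcentres} force $c^\ell_{in}=p_i/(p_i+A^\ell_n(1-p_i))$ for a single scalar parameter $A^\ell_n$, so that the full $M$-dimensional system collapses to one scalar equation. Writing $A^\ell_n=A_n(1-s)$ and using $c_{in}=p_i/(p_i+A_n(1-p_i))$, a short manipulation (using $A_n(1-p_i)/p_i=(1-c_{in})/c_{in}$) produces the closed-form identity
\[
c^\ell_{in}-c_{in} = \frac{c_{in}(1-c_{in})\,s}{1-s(1-c_{in})},
\]
valid whenever $\abs{s}(1-c_{in})<1$. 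This will be the main computational input.

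Plugging this identity into the sum constraint $\sum_i c^\ell_{in}m_{in}=k_n+\ell$ and dividing by $T:=\sum_j c_{jn}(1-c_{jn})m_{jn}$, the parameter $s$ is characterized by the scalar equation
\[
d^\ell_n = \sum_i w_i\,\frac{s}{1-s(1-c_{in})}, \qquad w_i := \frac{c_{in}(1-c_{in})m_{in}}{T}, \qquad \sum_i w_i = 1.
\]
The right-hand side is strictly increasing in $s$ on the interval where the denominators are positive (its derivative is $\sum_i w_i/(1-s(1-c_{in}))^2>0$), vanishes at $s=0$ with derivative $1$ there, and is continuous. Together with the hypothesis $\abs{d^\ell_n}\leq 1/2$, this gives existence and uniqueness of~$s$, together with an explicit two-sided comparison between $\abs{s}$ and $\abs{d^\ell_n}$: specifically $\abs{s}<\abs{d^\ell_n}$ when $\ell>0$ (since each factor $1/(1-s(1-c_{in}))$ exceeds $1$), and $\abs{s}\leq 2\abs{d^\ell_n}$ when $\ell<0$ (using $\abs{d^\ell_n}\geq \abs{s}/(1+\abs{s})$ and $\abs{d^\ell_n}\leq 1/2$). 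In both cases $\abs{s}\leq 1$, so every denominator $1-s(1-c_{in})$ is bounded below by $1/2$.

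Finally, I would subtract $c_{in}(1-c_{in})d^\ell_n$ from the identity for $c^\ell_{in}-c_{in}$ and replace $d^\ell_n$ by its weighted-sum expression. The two instances of $s$ partially cancel, producing the clean error formula
\[
e^\ell_{in} = c_{in}(1-c_{in})\,s^2\sum_j w_j\,\frac{c_{jn}-c_{in}}{\bigl(1-s(1-c_{in})\bigr)\bigl(1-s(1-c_{jn})\bigr)},
\]
which is manifestly of order $s^2$. The bound $\abs{e^\ell_{in}}\leq(d^\ell_n)^2$ is then an arithmetic check combining $c_{in}(1-c_{in})\leq 1/4$, $\abs{c_{jn}-c_{in}}\leq 1$, the denominator lower bounds just established, and the sign-sensitive comparison between $\abs{s}$ and $\abs{d^\ell_n}$. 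The main obstacle is squeezing out the \emph{sharp} constant $1$ rather than a harmless $O((d^\ell_n)^2)$: the cases $\ell>0$ and $\ell<0$ must be treated separately, since the denominators behave differently (they shrink when $s>0$ and grow when $s<0$), and the numerical factors must be tallied carefully so that these two effects compensate the asymmetry in $\abs{s}$ versus $\abs{d^\ell_n}$.
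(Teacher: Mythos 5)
Your argument is correct and takes a genuinely different route from the paper's. Both exploit the one-parameter reduction $c^\ell_{in}=p_i/(p_i+A^\ell_n(1-p_i))$, but the paper never writes an exact formula for the error: it isolates $e^\ell_{in}$ in terms of the unknown $A^\ell_n$, uses $\sum_i e^\ell_{in} m_{in}=0$ to sandwich $A^\ell_n$ between the minimum and maximum over $i$ of a certain ratio, expands that ratio as a geometric series in $d^\ell_n$ to obtain the two one-sided bounds $A_n(1-d^\ell_n)\leq A^\ell_n\leq A_n(1-d^\ell_n+2(d^\ell_n)^2)$, and then substitutes each of these back to bound $e^\ell_{in}$ from above and below separately. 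You instead set $A^\ell_n=A_n(1-s)$, deduce the exact identity $c^\ell_{in}=c_{in}/(1-s(1-c_{in}))$, and hence the closed form
\[
e^\ell_{in}=c_{in}(1-c_{in})\,s^2\sum_j w_j\,\frac{c_{jn}-c_{in}}{\bigl(1-s(1-c_{in})\bigr)\bigl(1-s(1-c_{jn})\bigr)},
\]
which is manifestly second order in $s$. The sign-dependent comparison you flag ($\abs{s}<\abs{d^\ell_n}$ for $\ell>0$; $\abs{s}\leq 2\abs{d^\ell_n}$ for $\ell<0$, from $\abs{d^\ell_n}\geq\abs{s}/(1+\abs{s})$ and $\abs{d^\ell_n}\leq 1/2$) is compensated exactly by the behaviour of the denominators: for $s>0$ each factor is only bounded below by $1-s>1/2$ (product $>1/4$, contributing a factor $4$), while for $s<0$ each factor exceeds $1$ (product $\geq1$). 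Combining with $c_{in}(1-c_{in})\leq 1/4$, $\abs{c_{jn}-c_{in}}<1$ and $\sum_j w_j=1$ gives $\abs{e^\ell_{in}}<s^2<(d^\ell_n)^2$ in the first case and $\abs{e^\ell_{in}}\leq s^2/4\leq(d^\ell_n)^2$ in the second, so the sharp constant $1$ does come out and the arithmetic you defer is indeed routine. The advantage of your route is transparency: the $O(s^2)$ scaling and the sign dependence are visible at a glance from the exact formula, whereas the paper manipulates two separate one-sided inequalities on $A^\ell_n$ that must be fed back into the expression for $e^\ell_{in}$ and estimated twice.
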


\begin{proof}
	For ease of notation, write $\sigma_{in} := c_{in}(1-c_{in})$. As before, 
	we write
	\[
		A^\ell_n = \frac{1-c^\ell_{in}}{c^\ell_{in}} \frac{p_i}{1-p_i}
		= \frac{1-c_{in}-\sigma_{in}d^\ell_n-e^\ell_{in}}
		{c_{in}+\sigma_{in}d^\ell_n+e^\ell_{in}} \frac{p_i}{1-p_i}
	\]
	for the desired common value for all~$i$, so
	\begin{equation}\label{eqn:shifterror}
		e^\ell_{in} = \frac{p_i(1-c_{in}-\sigma_{in}d^\ell_n) -
		A^\ell_n(1-p_i)(c_{in}+\sigma_{in}d^\ell_n)} {A^\ell_n(1-p_i)+p_i}.
	\end{equation}
	
	As before, the value of~$A^\ell_n$ is uniquely determined by the 
	requirement that $\sum_{i=1}^M c^\ell_{in}m_{in} = k_n+\ell$. Since 
	$\sum_{i=1}^M c_{in}m_{in} = k_n$ and $\sum_{i=1}^M \sigma_{in} d^\ell_n 
	m_{in} = \ell$, this requirement says that
	\[
		\sum_{i=1}^M e^\ell_{in}m_{in} = 0.
	\]
	In particular, the~$e^\ell_{in}$ cannot be all positive or all negative, 
	from which we derive, using~\eqref{eqn:shifterror}, that $A^\ell_n$ must 
	satisfy the double inequalities
	\[
		\min_{i=1,\dots,M} \left\{ \frac{p_i(1-c_{in}-\sigma_{in}d^\ell_n)} 
		{(1-p_i)(c_{in}+\sigma_{in}d^\ell_n)} \right\} \leq A^\ell_n \leq 
		\max_{i=1,\dots,M} \left\{ \frac{p_i(1-c_{in}-\sigma_{in}d^\ell_n)} 
		{(1-p_i)(c_{in}+\sigma_{in}d^\ell_n)} \right\}.
	\]
	A simple calculation establishes that
	\[
		\frac{p_i(1-c_{in}-\sigma_{in}d^\ell_n)}
			{(1-p_i)(c_{in}+\sigma_{in}d^\ell_n)}
		= \frac{1-c_{in}}{c_{in}} \frac{p_i}{1-p_i} \left( 1 + 
		\sum_{k=1}^\infty \frac{(-(1-c_{in})d^\ell_n )^k}{1-c_{in}} \right),
	\]
	from which (using $\abs{d^\ell_n} \leq 1/2$) we can conclude that
	\[
		\frac{1-c_{in}}{c_{in}} \frac{p_i}{1-p_i} \bigl( 1-d^\ell_n \bigr)
		\leq A^\ell_n \leq \frac{1-c_{in}}{c_{in}} \frac{p_i}{1-p_i} 
		(1-d^\ell_n + 2\bigl( d^\ell_n)^2 \bigr),
	\]
	since by~\eqref{eqn:centres}, neither the lower bound nor the upper bound 
	here depends on~$i$.

	Inserting the lower bound on~$A^\ell_n$ into~\eqref{eqn:shifterror} gives
	\[
		e^\ell_{in} \leq \frac{\sigma_{in}(1-c_{in}) (d^\ell_n)^2} 
		{1-(1-c_{in})d^\ell_n} \leq \frac12 (d^\ell_n)^2,
	\]
	where in the last step we used that $\abs{d^\ell_n} \leq 1/2$ and 
	$\sigma_{in} \leq 1/4$. Likewise, substituting the upper bound 
	on~$A^\ell_n$ into~\eqref{eqn:shifterror} yields
	\[
		e^\ell_{in} \geq -\frac{\sigma_{in}(1+c_{in}) (d^\ell_n)^2 + 
		2\sigma_{in}(1-c_{in}) (d^\ell_n)^3} {1 - (1-c_{in})d^\ell_n + 
		2(1-c_{in})(d^\ell_n)^2} \geq -\frac{2\sigma_{in} (d^\ell_n)^2}{1-1/2} 
		\geq -(d^\ell_n)^2. \qedhere
	\]
\end{proof}

For future use, we state the following corollary:

\begin{corollary}\label{cor:cin-ci}
	If $(k_n - \sum_{i=1}^M c_i m_{in}) / \sqrt{n} \to K$ for some $K\in 
	[-\infty,\infty]$, then for $i\in \{1,\dots,M\}$,
	\[
		\frac{(c_{in}-c_i) m_{in}}{\sqrt n}
		\to \frac{c_i(1-c_i)\alpha_i}{\sum_{j=1}^M c_j(1-c_j)\alpha_j} K.
	\]
\end{corollary}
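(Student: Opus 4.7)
The plan is to apply Lemma~\ref{lem:shiftedcentres} in a slightly rotated guise: instead of comparing the solution of \eqref{eqn:centres} for $k_n$ to that for $k_n+\ell$, I would compare the solution for $k_n$ to the solution for $\bar{k}_n := \sum_{i=1}^M c_i m_{in}$. The point is that $c_i$ itself satisfies $\frac{1-c_i}{c_i}\frac{p_i}{1-p_i}=\frac{1-c_j}{c_j}\frac{p_j}{1-p_j}$ and $\sum_i c_i m_{in}=\bar{k}_n$, so $(c_1,\dots,c_M)$ is precisely the (unique) solution of the system \eqref{eqn:centres} with $k_n$ replaced by $\bar{k}_n$. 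Therefore I can invoke Lemma~\ref{lem:shiftedcentres} with $c_{in}$ in the lemma playing the role of~$c_i$ and $\ell = k_n-\bar{k}_n$, so that the ``shifted'' centres $c^\ell_{in}$ become the actual $c_{in}$ of our system.

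Before quoting the lemma I need to verify its hypothesis $\abs{d^\ell_n}\leq 1/2$ for large $n$. With this adaptation,
\[
    d_n \;=\; \frac{k_n - \sum_{j=1}^M c_j m_{jn}}{\sum_{j=1}^M c_j(1-c_j) m_{jn}}.
\]
Both $k_n/n\to\alpha$ and $\sum_j c_j m_{jn}/n\to\sum_j c_j\alpha_j=\alpha$, so the numerator is $o(n)$, while the denominator is of order $n$ with a strictly positive limiting coefficient. Hence $d_n\to 0$, and the hypothesis is satisfied for all sufficiently large~$n$.

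Lemma~\ref{lem:shiftedcentres} then yields
\[
    c_{in} \;=\; c_i + c_i(1-c_i)\,d_n + \tilde e_{in},
    \qquad \abs{\tilde e_{in}}\leq d_n^2.
\]
Multiplying by $m_{in}/\sqrt{n}$ gives
\[
    \frac{(c_{in}-c_i)m_{in}}{\sqrt{n}}
    \;=\; \frac{c_i(1-c_i)\,m_{in}/n}{\sum_{j=1}^M c_j(1-c_j)\,m_{jn}/n}
    \cdot\frac{k_n-\sum_{j=1}^M c_j m_{jn}}{\sqrt n}
    \;+\; \frac{m_{in}}{\sqrt n}\tilde e_{in}.
\]
The first factor in the main term converges to $c_i(1-c_i)\alpha_i/\sum_j c_j(1-c_j)\alpha_j$, and the second factor converges to~$K$ by hypothesis, so the main term converges to the claimed value.

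For the error term, I would argue separately for $K$ finite and $K=\pm\infty$. When $K$ is finite, $d_n=O(1/\sqrt n)$ so $m_{in}\tilde e_{in}/\sqrt n=O(1/\sqrt n)\to 0$. When $\abs{K}=\infty$, the main term diverges, and the ratio of error to main term is of order $d_n=o(1)$, so the error is still negligible compared to the main term, yielding divergence with the correct sign. The only mild subtlety---really the one place to be careful---is precisely this separate bookkeeping of the error when $K$ is infinite, but it reduces to the observation that $(k_n-\sum_j c_j m_{jn})/n\to 0$, which is an immediate consequence of the convergence of $k_n/n$ and $m_{jn}/n$.
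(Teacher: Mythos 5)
Your proof is correct and hinges on the same tool as the paper's proof: Lemma~\ref{lem:shiftedcentres} applied with the shift $\abs{\ell}=\abs{k_n-\sum_i c_i m_{in}}$. You simply relabel so that the base system has target $\bar k_n=\sum_i c_i m_{in}$ (with solution $c_i$) and the shifted system has target $k_n$ (with solution $c_{in}$); the paper does the reverse, expanding around $c_{in}$ and reaching $c_i$, which produces the same expansion up to sign and a harmless replacement of $c_j(1-c_j)$ by $c_{jn}(1-c_{jn})$ in the denominator. The one substantive difference is the case $K=\pm\infty$: you observe directly that the main term is of order $\sqrt n\,d_n$ while the error is $O(\sqrt n\,d_n^2)$, so since $d_n\to 0$ the error is relatively negligible and the sum diverges with the sign of the main term; the paper instead reduces to the finite-$K$ case using monotonicity of $c_{in}$ in $k_n$. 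Both arguments are sound, and your direct estimate is arguably cleaner since it avoids a separate monotonicity step.
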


\begin{remark}
	If $(k_n-\Ex(\Sigma_n)) / \sqrt n\to K\in\R$, then $\alpha = \sum_{i=1}^M 
	p_i\alpha_i$ and we have $c_i = p_i$ for all $i\in \{1,\dots,M\}$. In this 
	situation, Corollary~\ref{cor:cin-ci} states that the vectors 
	$\Xscaled^p_n - \Xscaled_n$, and hence also the same vectors conditioned 
	on $\{\Sigma_n\geq k_n\}$, converge pointwise to the vector whose $i$-th 
	component is
	\[
		\frac{p_i(1-p_i)\alpha_i}{\sum_{j=1}^M p_j(1-p_j)\alpha_j} K.
	\]
\end{remark}

\begin{proof}[Proof of Corollary~\ref{cor:cin-ci}]
	First, suppose that $K\in\R$. If $\ell = \sum_{i=1}^M c_i m_{in} - k_n$ 
	and the~$c_{in}^\ell$ satisfy~\eqref{eqn:shiftedcentres}, then 
	$c_{in}^\ell = c_i$. Hence, by Lemma~\ref{lem:shiftedcentres},
	\[
		c_i-c_{in} = c_{in}(1-c_{in})d_n^\ell + O\bigl( (d_n^\ell)^2 \bigr),
	\]
	where
	\[
		d_n^\ell  = \frac{\sum_{i=1}^M c_i m_{in} - k_n}{\sum_{j=1}^M c_{jn} 
		(1-c_{jn}) m_{jn}} = O\bigl( n^{-1/2} \bigr).
	\]
	This implies
	\[
		\frac{(c_i-c_{in})m_{in}}{\sqrt n}
		= \frac{c_{in}(1-c_{in})m_{in}}{\sum_{j=1}^M c_{jn}(1-c_{jn})m_{jn}} 
		\frac{\sum_{i=1}^M c_i m_{in}-k_n}{\sqrt n}+ O\bigl( n^{-1/2} \bigr),
	\]
	from which the result follows.

	Next, suppose that $K=\infty$. Since $c_{in}$ is increasing as a function 
	of~$k_n$, we have by the first part of the proof
	\[
		\liminf_{n\to\infty} \frac{(c_{in}-c_i)m_{in}}{\sqrt n}
		\geq \frac{c_i(1-c_i)\alpha_i}{\sum_{j=1}^M c_j(1-c_j)\alpha_j} L
	\]
	for all $L\in\R$. Hence, the left-hand side is equal to~$\infty$. The 
	proof for the case $K=-\infty$ is similar.
\end{proof}

When we condition on $\{\Sigma_n = k_n+\ell\}$, then in analogy with what we 
have done before, the natural scaled vector to consider would be the vector
\[
	\Xscaled^{\ell}_n := \left( \frac{X_{1n} - x^\ell_{1n}}{\sqrt n}, 
	\frac{X_{2n} - x^\ell_{2n}}{\sqrt n}, \dots, \frac{X_{Mn} - 
	x^\ell_{Mn}}{\sqrt n} \right),
\]
where the components of the vector $x^\ell_n = (x^\ell_{1n}, \dots, 
x^\ell_{Mn})$ identify the centres around which the~$X_{in}$ concentrate. 
Here, the~$x^\ell_{in}$ are nonnegative integers chosen such that 
$\abs{x^\ell_{in} - c^\ell_{in} m_{in}} < 1$ for all~$i$, and $\sum_{i=1}^M 
x^\ell_{in} = k_n+\ell$. Note that the vector~$\Xscaled^{\ell}_n$ is simply a 
translation of~$\Xscaled_n$ by $(x^\ell_n - x_n) / \sqrt{n}$. Since 
Lemma~\ref{lem:rangeofell} shows that if~$k_n$ is sufficiently larger 
than~$\Ex(\Sigma_n)$, only values of~$\ell$ up to small order in~$n$ are 
relevant, the statement of Theorem~\ref{thm:limitabovemean} should not come as 
a surprise. To prove it, we need to refine the arguments we used to prove 
Theorem~\ref{thm:limitgivensuccesses}.

\begin{proof}[Proof of Theorem~\ref{thm:limitabovemean}]
	Assume that $(k_n-\Ex(\Sigma_n)) / \sqrt{n} \to \infty$, and let
	\[
		a_n := 2M \left\lfloor \sqrt{n} \left( \frac{\sqrt n}{k_n - 
		\Ex(\Sigma_n)} \right)^{1/2} \right\rfloor.
	\]
	Note that then $a_n\to\infty$ but $a_n / \sqrt n\to 0$. Furthermore, 
	Lemma~\ref{lem:rangeofell} and a short computation show that
	\[
		\frac{\Pr(\Sigma_n > k_n+a_n)}{\Pr(\Sigma_n \geq k_n)} \to 0.
	\]
	It is easy to see that from this last fact it follows that
	\[
		\sup_A \Bigl| \Pr(\Xscaled_n\in A\mid \Sigma_n\geq k_n) - 
		\Pr(\Xscaled_n\in A\mid k_n\leq \Sigma_n\leq k_n+a_n) \Bigr| \to 0,
	\]
	where the supremum is over all Borel subsets~$A$ of~$\R^M$. It is 
	therefore sufficient to consider the limiting distribution of the 
	vector~$\Xscaled_n$ conditioned on the event $\{k_n\leq\Sigma_n\leq 
	k_n+a_n\}$, rather than on the event $\{\Sigma_n\geq k_n\}$.

	As in the proof of Theorem~\ref{thm:limitgivensuccesses}, for $z\in \R^M$ 
	we let $r^z_n = \round{z\sqrt n}$, and we define the functions $f_n\colon 
	\R^M\to \R$ by setting
	\[
		f_n(z) := (\sqrt{n})^M \Pr\bigl( \sqrt{n} \, \Xscaled_n = r^z_n \bigm| 
		k_n \leq \Sigma_n \leq k_n+a_n \bigr).
	\]
	As before, this is a probability density function with respect to Lebesgue 
	measure~$\lambda$ on~$\R^M$, and if $\Zscaled_n$ is a random vector with 
	this density, then the vector $\Zscaled_n' = \round{\Zscaled_n\sqrt n} / 
	\sqrt{n}$ has the same distribution as the vector~$\Xscaled_n$ conditioned 
	on the event $\{k_n\leq \Sigma_n\leq k_n+a_n\}$. Hence, it is enough to 
	show that the weak limit of~$\Zscaled_n$ has density $f / \int f\, d\mu_0$ 
	with respect to~$\mu_0$.
	
	An essential difference compared to the situation in 
	Theorem~\ref{thm:limitgivensuccesses}, however, is that the 
	densities~$f_n$ are no longer supported by the collection of points~$z$ 
	for which $r^z_n$ is in the hyperplane~$S_0$ (i.e.\ the union of those 
	sheared cubes that intersect~$S_0$). Rather, the support now encompasses 
	all the points~$z$ for which~$r^z_n$ is in any of the hyperplanes
	\[
		S_\ell := \{(z_1,\dots,z_M) \in \R^M\colon z_1+\dots+z_M = \ell\}, 
		\qquad \ell = 0,1,\dots,a_n,
	\]
	because if $r^z_n \in S_\ell$, then the event $\{\sqrt{n} \, \Xscaled_n = 
	r^z_n\}$ is contained in the event $\{\Sigma_n = k_n + \ell\}$. For this 
	reason, the densities~$f_n$ are not so convenient to work with here. 
	Instead, it is more convenient to ``coarse-grain'' our densities by 
	spreading the mass over sheared cubes of volume $( (2a_n+1)/\sqrt n )^M$ 
	rather than volume~$(1/\sqrt n)^M$, to the effect that all the mass is 
	again contained in the collection of sheared (coarse-grained) cubes 
	intersecting~$S_0$.
	
	\begin{figure}
		\begin{center}
			\includegraphics{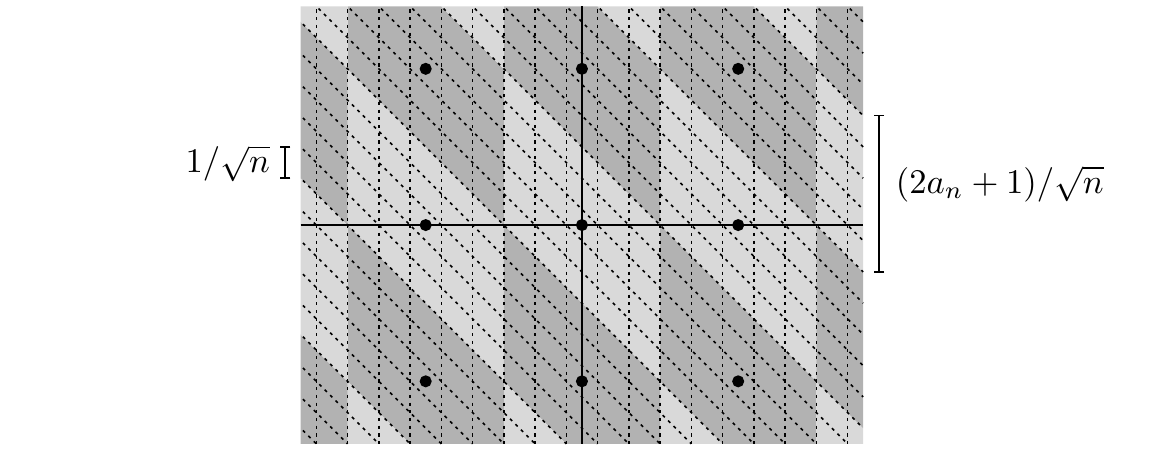}
		\end{center}
		\caption{We coarse-grain our densities by combining $(2a_n+1)^M$ 
		sheared cubes into larger sheared cubes. Here, we show this 
		coarse-graining for $M=2$ and $a_n = 2$. The dots are the points in 
		$((2a_n+1)\Z)^M / \sqrt{n}$. The combined sheared cubes have been 
		coloured in a chessboard fashion as a visual aid.}
		\label{fig:CoarseGraining}
	\end{figure}

	To this end, for given~$n$ we partition $\R^M$ into the collection of 
	sets
	\begin{equation}\label{eqn:coarsepartition}
		\Bigl\{ \frac1{\sqrt n} \bigl( a + \sigma^{-1} (-a_n-1/2, a_n+1/2]^M 
		\bigr)\colon a\in \bigl( (2a_n+1)\Z \bigr)^M \Bigr\}.
	\end{equation}
	See Figure~\ref{fig:CoarseGraining}. For a given point $z\in \R^M$, we 
	denote by~$Q_n^z$ the sheared cube in this partition containing~$z$. Now 
	we can define the coarse-grained densities
	\[\begin{split}
		g_n(z)
		&:= \left( \frac{\sqrt n}{2a_n+1} \right)^{\!\!M} \Pr( \Xscaled_n \in 
		Q^z_n \mid k_n\leq \Sigma_n \leq k_n+a_n ) \\
		&\phantom:= \left( \frac{\sqrt n}{2a_n+1} \right)^{\!\!M} \int_{Q_n^z} 
		f_n(y)\, d\lambda(y).
	\end{split}\]
	By construction, these are again probability density functions with 
	respect to $M$-dimensional Lebesgue measure~$\lambda$. Moreover, each of 
	these densities is supported on the collection of sheared cubes 
	in~\eqref{eqn:coarsepartition} that intersect~$S_0$, and is constant on 
	each sheared cube~$Q^z_n$. In particular, for any given point $z\in\R^M$ 
	we have
	\[
		\int_{Q^z_n} g_n(y)\, d\lambda(y) = \frac{2a_n+1}{\sqrt n} 
		\int_{Q^z_n} g_n(y)\, d\mu_0(y).
	\]
	Finally, because $a_n / \sqrt n\to 0$ it is clear that if~$\Zscaled_n''$ 
	has density~$g_n$, then its weak limit will coincide with that 
	of~$\Zscaled_n$, and hence also with that of the vector~$\Xscaled_n$ 
	conditioned on the event $\{k_n\leq \Sigma_n\leq k_n+a_n\}$.

	Suppose now that we could prove that
	\begin{equation}\label{eqn:coarsegrainedlimit}
		\frac{2a_n+1}{\sqrt n} g_n(z) \to \frac{f(z)}{\int f\, d\mu_0}
		\qquad \text{for every $z\in\R^M$.}
	\end{equation}
	Then it would follow from Fatou's lemma that for every open set $A\subset 
	\R^M$,
	\[
		\liminf_{n\to\infty} \int_A \frac{2a_n+1}{\sqrt n} g_n(z)\, d\mu_0(z)
		\geq \frac{\int_A f(z)\, d\mu_0(z)}{\int f\, d\mu_0}.
	\]
	By approximating the open set~$A$ by unions of sheared cubes contained 
	in~$A$, as in the proof of Theorem~\ref{thm:limitgivensuccesses}, it is 
	then clear that this would imply that
	\[
		\liminf_{n\to\infty} \int_A g_n(z)\, d\lambda(z)
		\geq \frac{\int_A f(z)\, d\mu_0(z)}{\int f\, d\mu_0}.
	\]
	It therefore only remains to establish~\eqref{eqn:coarsegrainedlimit}.

	Since \eqref{eqn:coarsegrainedlimit} holds by construction for $z\notin 
	S_0$, we only need to consider the case $z\in S_0$. So let us fix $z\in 
	S_0$, and look at~$g_n(z)$. By definition, this is just the rescaled 
	conditional probability that the vector~$\Xscaled_n$ lies in the sheared 
	cube~$Q^z_n$, given that $k_n \leq \Sigma_n \leq k_n+a_n$. In other words, 
	if we define $C_n^z := \sqrt{n} Q^z_n \cap \Z^M$ and $C_{\ell n}^z := 
	C_n^z \cap S_\ell$, then we have
	\[\begin{split}
		g_n(z)
		&= \left(\frac{\sqrt n}{2a_n+1}\right)^{\!\!M} \sum_{r\in C_n^z} \Pr( 
		\sqrt{n} \, \Xscaled_n = r \mid k_n \leq \Sigma_n \leq k_n+a_n) \\
		&= \left(\frac{\sqrt n}{2a_n+1}\right)^{\!\!M} \sum_{\ell=0}^{a_n} 
		\sum_{r\in C_{\ell n}^z} \frac{\Pr(\sqrt{n} \, \Xscaled_n = r \mid 
		\Sigma_n = k_n+\ell) \Pr(\Sigma_n = k_n+\ell)}{\Pr(k_n \leq \Sigma_n 
		\leq k_n+a_n)}.
	\end{split}\]
	Since $C^z_{\ell n}$ contains exactly $(2a_n+1)^{M-1}$ points, from this 
	equality we conclude that to prove~\eqref{eqn:coarsegrainedlimit}, it is 
	sufficient to show that
	\begin{equation}\label{eqn:coarseuniformconvergence}
		\sup_{0 \leq \ell \leq a_n} \sup_{r\in C^z_{\ell n}} \left| (\sqrt 
		n)^{M-1} \Pr(\sqrt{n} \, \Xscaled_n = r \mid \Sigma_n = k_n+\ell) - 
		\frac{f(z)} {\int f\, d\mu_0} \right| \to 0.
	\end{equation}

	The proof of~\eqref{eqn:coarseuniformconvergence} proceeds along the same 
	line as the proof of pointwise convergence in 
	Theorem~\ref{thm:limitgivensuccesses}, based on 
	Lemma~\ref{lem:binomialratios}. However, there is a catch: because we are 
	now conditioning on $\Sigma_n = k_n+\ell$, the~$X_{in}$ are no longer 
	centred around~$x_{in}$, but around $x^\ell_{in}$. We therefore first 
	write the conditional probabilities in a form analogous to what we had 
	before, by using that
	\[
		\Pr\bigl( \sqrt{n} \, \Xscaled_n = r \bigm| \Sigma_n = k_n+\ell \bigr) 
		= \Pr\bigl( \sqrt{n} \, \Xscaled^{\ell}_n = r+x_n-x^\ell_n \bigm| 
		\Sigma_n = k_n+\ell \bigr).
	\]
	Writing $r^\ell := r+x_n-x^\ell_n$ for convenience, we now want to study 
	the ratios
	\[
		\frac{\Pr(\sqrt{n} \, \Xscaled^{\ell}_n = r^\ell \mid \Sigma_n = 
		k_n+\ell)} {\Pr(\sqrt{n} \, \Xscaled^{\ell}_n = 0 \mid \Sigma_n = 
		k_n+\ell)}
		= \frac{\Pr(\sqrt{n} \, \Xscaled^{\ell}_n = r^\ell)}
		{\Pr(\sqrt{n} \, \Xscaled^{\ell}_n = 0)}
		= \prod_{i=1}^M \frac{\Pr(X_{in} = x^\ell_{in} +r^\ell_i)}
		{\Pr(X_{in} = x^\ell_{in})}
	\]
	for $\ell$ and~$r$ satisfying $0\leq \ell\leq a_n$ and $r\in C^z_{\ell 
	n}$.

	By equation~\eqref{eqn:shift} and Lemma~\ref{lem:shiftedcentres} we have 
	that $\sup_\ell \abs{x^\ell_{in} - x_{in}} = o(\sqrt{n})$, from which it 
	follows that also $\sup_{\ell,r} \abs{r^\ell-z\sqrt{n}} = o(\sqrt{n})$, 
	where the suprema are over all $\ell\in \{0,\dots,a_n\}$ and $r\in 
	C^z_{\ell n}$. Thus, by the first part of Lemma~\ref{lem:binomialratios},
	\[
		\sup_{0\leq\ell\leq a_n} \sup_{r\in C^z_{\ell n}} \left| 
		\frac{\Pr(\sqrt{n} \, \Xscaled^{\ell}_n = r^\ell \mid \Sigma_n = 
		k_n+\ell)} {\Pr(\sqrt{n} \, \Xscaled^{\ell}_n = 0 \mid \Sigma_n = 
		k_n+\ell)} - f(z) \right| \to 0,
	\]
	where we have used that for all terms concerned, $\prod_{i=1}^M 
	A_n^{r^\ell_i} = 1$ because $r^\ell\in S_0$. Furthermore, from the second 
	part of Lemma~\ref{lem:binomialratios} it follows that the functions
	\[
		z\mapsto \frac{\Pr(\sqrt{n} \, \Xscaled^{\ell}_n = \round{z\sqrt n} 
		\mid \Sigma_n = k_n+\ell )} {\Pr(\sqrt{n} \, \Xscaled^{\ell}_n = 0 
		\mid \Sigma_n = k_n+\ell)}
	\]
	are bounded uniformly in~$n$ and in all $\ell\in \{0, \dots, a_n\}$ by a 
	$\mu_0$-integrable function. In the same way as in the proof of 
	Theorem~\ref{thm:limitgivensuccesses}, it follows from these facts (with 
	the addition that we have uniform bounds) that
	\[
		\sup_{0\leq\ell\leq a_n} \left| (\sqrt n)^{M-1} \Pr(\sqrt{n} \, 
		\Xscaled^{\ell}_n = 0 \mid \Sigma_n = k_n+\ell) - \frac1{\int f\, 
		d\mu_0} \right| \to 0.
	\]
	From this we conclude that~\eqref{eqn:coarseuniformconvergence} does hold, 
	which completes the proof of Theorem~\ref{thm:limitabovemean}.
\end{proof}

\subsection[Proof Theorem~\ref{thm:limitaroundmean}]{Proof of 
Theorem~\ref{thm:limitaroundmean}}

\begin{proof}[Proof of Theorem~\ref{thm:limitaroundmean}]
	Suppose that $(k_n - \Ex(\Sigma_n)) / \sqrt{n} \to K$ for some 
	$K\in[-\infty,\infty)$. Let $\Xscaled$ be a random vector having a 
	multivariate normal distribution with density $h / \int h\,d\lambda$ with 
	respect to~$\lambda$. By standard arguments, $\Xscaled^p_n$ converges 
	weakly to~$\Xscaled$. Therefore, for a rectangle $A\subset \R^M$ we have
	\[
		\Pr(\Xscaled^p_n\in A, \Sigma_n\geq k_n)
		= \Pr(\Xscaled^p_n\in A\cap H_{\frac{k_n-\Ex(\Sigma_n)}{\sqrt n}})
		\to \Pr(\Xscaled\in A\cap H_K),
	\]
	since $A\cap H_{K+\eps}$ is a $\lambda$-continuity set for all 
	$\eps\in\R$. Taking $A=\R^M$ gives
	\[
		\Pr(\Sigma_n\geq k_n) \to \Pr(\Xscaled\in H_K).
	\]
	Hence, for all rectangles $A\subset \R^M$
	\[
		\Pr(\Xscaled^p_n\in A \mid \Sigma_n \geq k_n)
		\to \frac{\Pr(\Xscaled\in A\cap H_K)}{\Pr(\Xscaled\in H_K)}.
		\qedhere
	\]
\end{proof}

\subsection{Law of large numbers}

Finally, we prove a law of large numbers, which we will need in 
Section~\ref{sect:asymptStochDomination}. Let $\tilde X_{in}$ denote a random 
variable with the conditional law of~$X_{in}$, conditioned on the event 
$\{\Sigma_n \geq k_n\}$. If $(k_n-\Ex(\Sigma_n))/\sqrt{n} \to K$ for some 
$K\in[-\infty,\infty]$, then an immediate consequence of Theorems 
\ref{thm:limitabovemean} and~\ref{thm:limitaroundmean} is that $\tilde 
X_{in}/n$ converges in probability to either $p_i\alpha_i$ or $c_i\alpha_i$. 
The following theorem shows that such a law of large numbers holds for a 
general sequence~$k_n$ such that $k_n/n \to \alpha$.

\begin{theorem}\label{thm:lln}
	For $i\in\{1,\dots,M\}$, the random variable~$\tilde X_{in}/n$ converges 
	in probability to $p_i\alpha_i$ if $\alpha \leq \sum_{i=1}^M p_i\alpha_i$, 
	or to $c_i\alpha_i$ if $\alpha\geq \sum_{i=1}^M p_i\alpha_i$.
\end{theorem}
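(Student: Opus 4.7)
The plan is a subsequence argument built on top of Theorems~\ref{thm:limitabovemean} and~\ref{thm:limitaroundmean}. Since $k_n/n\to\alpha$ and $\Ex(\Sigma_n)/n \to \sum_{j=1}^M p_j\alpha_j$, the sign of $\alpha-\sum_{j=1}^M p_j\alpha_j$ controls the asymptotics of $(k_n-\Ex(\Sigma_n))/\sqrt n$: if $\alpha>\sum_{j=1}^M p_j\alpha_j$ the ratio tends to $+\infty$; if $\alpha<\sum_{j=1}^M p_j\alpha_j$ it tends to $-\infty$; in the boundary case $\alpha=\sum_{j=1}^M p_j\alpha_j$ the ratio need not converge at all. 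This trichotomy organizes the proof.

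First I would dispose of the two non-boundary cases. If $\alpha>\sum_{j=1}^M p_j\alpha_j$, then Theorem~\ref{thm:limitabovemean} gives weak convergence of $\Law(\Xscaled_n\mid\Sigma_n\geq k_n)$; in particular the conditional law of $(\tilde X_{in}-c_{in}m_{in})/\sqrt n$ is tight, so dividing by another factor of $\sqrt n$ yields $\tilde X_{in}/n-c_{in}m_{in}/n\to 0$ in probability. Since $c_{in}\to c_i$ and $m_{in}/n\to\alpha_i$, we conclude $\tilde X_{in}/n\to c_i\alpha_i$ in probability. If instead $\alpha<\sum_{j=1}^M p_j\alpha_j$, then Theorem~\ref{thm:limitaroundmean} with $K=-\infty$ gives weak convergence of $\Law(\Xscaled^p_n\mid\Sigma_n\geq k_n)$ to the unconditional multivariate normal; tightness of the conditional law of $(\tilde X_{in}-p_i m_{in})/\sqrt n$ gives $\tilde X_{in}/n\to p_i\alpha_i$ in probability by the same reasoning.

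The boundary case $\alpha=\sum_{j=1}^M p_j\alpha_j$ is the only one requiring a subsequence argument, and I expect it to be the main (minor) obstacle, since here $(k_n-\Ex(\Sigma_n))/\sqrt n$ may fail to converge and neither of the two theorems above applies directly. Given any subsequence of $\{n\}$, compactness of $[-\infty,\infty]$ lets me extract a further subsequence along which $(k_n-\Ex(\Sigma_n))/\sqrt n\to K$ for some $K\in[-\infty,\infty]$. Along this sub-subsequence I apply Theorem~\ref{thm:limitabovemean} when $K=\infty$ and Theorem~\ref{thm:limitaroundmean} when $K\in[-\infty,\infty)$, obtaining convergence in probability of $\tilde X_{in}/n$ to $c_i\alpha_i$ or $p_i\alpha_i$, respectively.

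The key reconciling observation is that at $\alpha=\sum_{j=1}^M p_j\alpha_j$ the unique positive solution of~\eqref{eqn:uniqueAn} in the limit is $A=1$, hence $c_i=p_i$ and both candidate limits coincide with $p_i\alpha_i$. Therefore every subsequence of $\{\tilde X_{in}/n\}$ contains a further subsequence converging in probability to the same constant, which, by the standard characterization of convergence in probability via subsequences, forces the whole sequence to converge in probability to that constant. This completes all three cases and proves the theorem.
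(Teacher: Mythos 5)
Your treatment of the two non-boundary cases matches the paper exactly. At the boundary $\alpha = \sum_{i=1}^M p_i\alpha_i$, however, you take a genuinely different route. The paper handles this case by \emph{sandwiching}: it invokes the monotonicity Corollary~\ref{cor:k,k+1} to replace conditioning on $\{\Sigma_n\geq k_n\}$ with conditioning on $\{\Sigma_n\geq(\alpha+\delta)n\}$ (which has $(k_n'-\Ex\Sigma_n)/\sqrt n\to\infty$) for the upper tail, and similarly with $(\alpha-\delta)n$ for the lower tail, and then uses continuity of the map $\alpha\mapsto c_i[\alpha]$ to show that the limiting centres $c_i[\alpha\pm\delta]\alpha_i$ can be squeezed within $\eps/2$ of $p_i\alpha_i$. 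You instead use a compactness/subsequence argument: extract a sub-subsequence along which $(k_n-\Ex\Sigma_n)/\sqrt n$ converges in $[-\infty,\infty]$, apply Theorem~\ref{thm:limitabovemean} or~\ref{thm:limitaroundmean} along it, and observe that the two candidate limits $c_i\alpha_i$ and $p_i\alpha_i$ coincide because $A=1$ at the boundary.

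Both arguments are correct. Your version dispenses with Corollary~\ref{cor:k,k+1} and the continuity of $c_i[\alpha]$ in $\alpha$, which is a genuine simplification; but it trades them for the (true, but worth stating explicitly) observation that the hypotheses and conclusions of Theorems~\ref{thm:limitabovemean} and~\ref{thm:limitaroundmean} are purely asymptotic and therefore hold along any subsequence of the index set, and that $c_{in}\to c_i$ and $m_{in}/n\to\alpha_i$ persist along subsequences so that tightness of $\Xscaled_n$ (resp.\ $\Xscaled^p_n$) still yields $\tilde X_{in}/n - c_{in}m_{in}/n\to 0$ (resp.\ $\tilde X_{in}/n - p_im_{in}/n\to 0$) in probability. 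Once that is granted, the subsequence characterisation of convergence in probability closes the argument cleanly. The paper's sandwich argument, by contrast, stays entirely within the framework of the theorems applied to a single sequence, at the cost of two auxiliary inputs.
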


\begin{proof}
	If $\alpha \neq \sum_{i=1}^M p_i\alpha_i$, then $(k_n - \Ex(\Sigma_n)) / 
	\sqrt{n}$ goes to $-\infty$ or~$\infty$ as $n\to\infty$, and the result 
	immediately follows from Theorem~\ref{thm:limitabovemean} and 
	Theorem~\ref{thm:limitaroundmean}.

	Now suppose that $\alpha = \sum_{i=1}^M p_i\alpha_i$. Then $c_i = p_i$ for 
	all $i\in\{1,\dots,M\}$. Recall that in general the~$c_i$ and~$A$ are 
	determined by the equations
	\[
		c_i = \frac{p_i}{p_i+A(1-p_i)} \quad\text{and}\quad
		\sum_{i=1}^M \frac{p_i\alpha_i}{p_i+A(1-p_i)} = \alpha.
	\]
	The constant~$A$ is continuous as a function of~$\alpha$, hence $c_i = 
	c_i[\alpha]$ is also continuous as a function of~$\alpha$. Therefore, if 
	$\alpha = \sum_{i=1}^M p_i\alpha_i$, then for each $\eps>0$ we can choose 
	$\delta>0$ such that $c_i[\alpha+\delta] \alpha_i \leq p_i\alpha_i + 
	\frac{1}{2} \eps$. By Corollary~\ref{cor:k,k+1} we have, for large 
	enough~$n$,
	\begin{multline*}
		\Pr(X_{in}\geq (p_i\alpha_i+\eps)n \mid \Sigma_n\geq k_n) \\
	\begin{aligned}
		&\leq \Pr(X_{in}\geq (p_i\alpha_i+\eps)n \mid \Sigma_n\geq (\alpha + 
		\delta)n) \\
		&\leq \Pr(X_{in}\geq (c_i[\alpha+\delta]\alpha_i + \tfrac{1}{2}\eps) n \mid 
		\Sigma_n\geq (\alpha + \delta)n ),
	\end{aligned}
	\end{multline*}
	which tends to~$0$ as $n\to\infty$ by Theorem~\ref{thm:limitabovemean}. 
	Similarly, using Corollary~\ref{cor:k,k+1} and 
	Theorem~\ref{thm:limitaroundmean} instead of 
	Theorem~\ref{thm:limitabovemean}, we obtain
	\[
		\Pr(X_{in}\leq (p_i\alpha_i-\eps)n \mid \Sigma_n\geq k_n) \to 0.
	\]
	We conclude that $\tilde X_{in}/n$ converges in probability to 
	$p_i\alpha_i = c_i\alpha_i$.
\end{proof}

\section[Asymptotic domination]{Asymptotic stochastic domination}
\label{sect:asymptStochDomination}

\subsection[Proof Theorem~\ref{thm:asymptstochdom}]{Proof of 
Theorem~\ref{thm:asymptstochdom}}

Consider the general framework for vectors $\X_n$ and~$\Y_n$ of 
Section~\ref{ssec:framework} in the setting of 
Section~\ref{ssec:asymptStochDomination}. We will split the proof of 
Theorem~\ref{thm:asymptstochdom} into four lemmas. In the statements of these 
lemmas, we will need the constant~$\hat\alpha$, which is defined as the limit 
as $n\to\infty$ of $\hat k_n/n$:
\[
	\hat k_n = \sum_{i=1}^M \frac{p_i m_{in}}{p_i+\beta_{\max}(1-p_i)}\,,
	\quad\text{hence}\quad
	\hat\alpha = \sum_{i=1}^M \frac{p_i \alpha_i}{p_i+\beta_{\max}(1-p_i)}\,.
\]

Let us first look at the definition of~$\hat \alpha$ in more detail. In 
Section~\ref{ssec:asymptStochDomination}, we informally introduced the 
sequence~$\hat k_n$ as a critical sequence such that if $k_n$ is around~$\hat 
k_n$, then there exists a block~$i$ such that the number of successes~$\tilde 
X_{in}$ of the vector~$\tilde \X_n$ in block~$i$ is roughly the same 
as~$\tilde Y_{in}$. We will now make this precise. Recall that the~$c_i$ and 
the constant~$A$ are determined by
\[
	c_i = \frac{p_i}{p_i+A(1-p_i)}
	\quad\text{and}\quad
	\sum_{i=1}^M \frac{p_i\alpha_i}{p_i+A(1-p_i)} = \alpha.
\]
Furthermore, note that
\[
	\frac{p_i}{p_i+\beta_i(1-p_i)} = q_i,
\]
and recall that we defined $I = \{i\in \{1,\dots,M\}\colon \beta_i = 
\beta_{\max}\}$. The ordering of $\alpha$ and~$\hat \alpha$ gives information 
about the ordering of the~$c_i$ and~$q_i$. This is stated in the following 
remark, which follows from the equations above.

\begin{remark}\label{remark:hatalpha}
	We have the following:
	\begin{itemize}
		\item[(i)] If $\alpha < \hat \alpha$, then $A>\beta_{\max}$ and $c_i < 
			q_i$ for all $i\in\{1,\dots,M\}$.
		\item[(ii)] If $\alpha = \hat \alpha$, then $A = \beta_{\max}$ and 
			$c_i = q_i$ for $i\in I$, while $c_i < q_i$ for $i\notin I$.
		\item[(iii)] If $\alpha > \hat \alpha$, then $A<\beta_{\max}$ and $c_i 
			> q_i$ for some $i\in\{1,\dots,M\}$.
		\item[(iv)] $\sum_{i=1}^M p_i\alpha_i \leq \hat\alpha \leq 
			\sum_{i=1}^M q_i\alpha_i$, with $\hat\alpha = \sum_{i=1}^M 
			p_i\alpha_i$ if and only if $\beta_{\max} = 1$, and $\hat\alpha =  
			\sum_{i=1}^M q_i\alpha_i$ if and only if all $\beta_i$ ($i\in 
			\{1,\dots,M\}$) are equal.
	\end{itemize}
\end{remark}

Our law of large numbers, Theorem~\ref{thm:lln}, states that $\tilde X_{in}/n$ 
converges in probability to $p_i\alpha_i$ if $\alpha \leq \sum_{i=1}^M 
p_i\alpha_i$, and to $c_i\alpha_i$ if $\alpha \geq \sum_{i=1}^M p_i\alpha_i$.    
This law of large numbers applies analogously to the vector~$\tilde\Y_n$. If 
we define $d_1,\dots,d_M$ as the unique solution of the system
\[
	\left\{ \begin{aligned}
		&\frac{1-d_i}{d_i} \frac{q_i}{1-q_i}
		= \frac{1-d_j}{d_j} \frac{q_j}{1-q_j}
		&&& \forall i,j\in\{1,\dots,M\}, \\
		& \textstyle\sum_{i=1}^M d_i \alpha_i = \alpha,
	\end{aligned} \right.
\]
then $\tilde Y_{in}/n$ converges in probability to $q_i\alpha_i$ if $\alpha 
\leq \sum_{i=1}^M q_i\alpha_i$, and to $d_i\alpha_i$ if $\alpha \geq 
\sum_{i=1}^M q_i\alpha_i$. These laws of large numbers and the observations in 
Remark~\ref{remark:hatalpha} will play a crucial role in the proofs in this 
section.

Now we define one-dimensional (possibly degenerate) distribution functions 
$F_K\colon \R\to[0,1]$ for $K\in[-\infty,\infty]$, which will come up in the 
proofs as the distribution functions of the limit of a certain function of the 
vectors~$\tilde \X_n$. Recall from Section~\ref{ssec:weakConvergence} the 
definitions \eqref{eqn:defnu0}, \eqref{eqn:deff}, \eqref{eqn:defh} 
and~\eqref{eqn:defHK} of the measure~$\nu_0$, the functions $f$ and~$h$ and 
the half-space~$H_K$. Write $u = (u_1,\dots,u_M)$. Then
\begin{equation}\label{eqn:defFK}
	F_K(z) = \begin{cases}
		\rule[-4ex]{0pt}{0pt}\hfil \displaystyle \frac{\int_{H_K\cap \{ 
		\sum_{i\in I} u_i\leq z \}} h(u)\,d\lambda(u)}{\int_{H_K} h 
		\,d\lambda}
		& \text{if $K<\infty$, $\alpha = \sum_{i=1}^M p_i\alpha_i$},\\
		\rule[-4ex]{0pt}{0pt}\hfil \displaystyle \frac{\int_{\{ \sum_{i\in I} 
		u_i\leq z-z_K \}} f(u) \,d\nu_0(u)}{\int f\,d\nu_0}
		& \text{if $K<\infty$, $\alpha > \sum_{i=1}^M p_i\alpha_i$}, \\
		\hfil 0 & \text{if $K=\infty$}, \\
	\end{cases}
\end{equation}
where
\begin{equation}\label{eqn:defzK}
	z_K = \frac{\sum_{i\in I} c_i(1-c_i)\alpha_i}{\sum_{i=1}^M c_i 
	(1-c_i)\alpha_i} K.
\end{equation}

The following lemmas, together with Proposition~\ref{prop:stochdom}, imply 
Theorem~\ref{thm:asymptstochdom}.

\begin{lemma}\label{lem:belowhatalpha}
	If $\alpha<\hat\alpha$, then $\sup \Pr(\tilde\X_n \leq \tilde\Y_n)\to 1$.
\end{lemma}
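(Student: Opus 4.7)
The plan is to construct, for each $n$, an explicit coupling $(\U,\V)$ of $(\tilde\X_n,\tilde\Y_n)$ with $\Pr(\U\le\V)\to 1$. I may assume the $\beta_i$ are not all equal, since otherwise Proposition~\ref{prop:stochdom} already gives $\sup\Pr(\tilde\X_n\le\tilde\Y_n)=1$ for every~$n$. By Remark~\ref{remark:hatalpha}(iv) this assumption forces $\hat\alpha<\sum_{i=1}^M q_i\alpha_i$, so that $\alpha<\hat\alpha$ entails $\alpha<\sum_i q_i\alpha_i$; Theorem~\ref{thm:lln} then yields $\Pr(\sum_{i=1}^M Y_{in}\ge k_n)\to 1$. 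I split the argument according to whether some $p_i$ coincides with $q_i$.

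Case~1: $p_i<q_i$ for every $i$. I build a two-stage coupling. In Stage~1 I sample $(\tilde X_{1n},\dots,\tilde X_{Mn})$ and $(\tilde Y_{1n},\dots,\tilde Y_{Mn})$ independently from their marginal joint distributions. By Theorem~\ref{thm:lln}, $\tilde X_{in}/n$ and $\tilde Y_{in}/n$ converge in probability to limits $a_i$ and $b_i=q_i\alpha_i$; when $\alpha\le\sum_j p_j\alpha_j$ I have $a_i=p_i\alpha_i<q_i\alpha_i=b_i$ by the strict inequality $p_i<q_i$, and when $\alpha>\sum_j p_j\alpha_j$ I have $a_i=c_i\alpha_i<q_i\alpha_i=b_i$ by Remark~\ref{remark:hatalpha}(i). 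In either sub-regime $a_i<b_i$ strictly, so a union bound gives $\Pr(\tilde X_{in}<\tilde Y_{in}\ \forall i)\to 1$. In Stage~2, on this good event I form $\U,\V$ block by block: in block~$i$, place $\V$'s $\tilde Y_{in}$ ones at a uniformly random size-$\tilde Y_{in}$ subset of the block, and take a uniformly random size-$\tilde X_{in}$ subset of those positions as $\U$'s ones; on the complementary event, sample $\U,\V$ independently. A short counting check (the ``uniform subset of a uniform set is uniform'' identity) shows that $\U$'s ones in block~$i$ are marginally uniform over all size-$\tilde X_{in}$ subsets, which combined with the correct sampling in Stage~1 gives $\U$ the required marginal law; $\V$ is correct by construction.

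Case~2: $p_i=q_i$ for some $i$. Then $\beta_{\max}=1$, so by Remark~\ref{remark:hatalpha}(iv) $\hat\alpha=\sum_i p_i\alpha_i$, and therefore $\alpha<\sum_i p_i\alpha_i$; the LLN gives $\Pr(\sum_{i=1}^M X_{in}\ge k_n)\to 1$ as well. I use the standard monotone coupling of the unconditioned vectors: let $U_1,\dots,U_n$ be i.i.d.\ uniform on $[0,1]$, and set the $j$th components of $\X_n,\Y_n$ to be $\I\{U_j\le p\}$ and $\I\{U_j\le q\}$ respectively, where $p,q$ are the success probabilities attached to the block containing~$j$; then $\X_n\le\Y_n$ pointwise. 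Define $\U=\X_n$ on $\{\sum_i X_{in}\ge k_n\}$ and otherwise resample $\U$ independently from $\Law(\X_n\mid\sum_i X_{in}\ge k_n)$; define $\V$ analogously for $\Y_n$. A short check shows both have the required marginal laws, and on $\{\sum_i X_{in}\ge k_n\}\cap\{\sum_i Y_{in}\ge k_n\}$, which has probability tending to~$1$ by a union bound, one has $\U=\X_n\le\Y_n=\V$.

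The main obstacle is preserving the marginal laws while forcing componentwise ordering: Case~1 handles this via the uniform-within-block structure and the subset identity just mentioned, while Case~2 exploits the fact that both conditionings are asymptotically vacuous so the ``resample on the bad event'' trick costs nothing in the limit. Everything else is a direct application of Theorem~\ref{thm:lln} and Remark~\ref{remark:hatalpha}.
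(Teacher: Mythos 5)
Your proof is correct and uses the same two coupling techniques as the paper: the ``monotone pointwise coupling of the unconditioned vectors, resample on the bad event'' trick, and the ``law of large numbers for block counts plus nested within-block placement'' trick. You organize the case split differently --- by whether $p_i<q_i$ for every $i$ or $p_i=q_i$ for some $i$, rather than the paper's split by $\alpha<\sum_i p_i\alpha_i$ versus $\sum_i p_i\alpha_i\le\alpha<\hat\alpha$ --- but the two splits are handled by the same tools, and they match because, as you observe, $p_i=q_i$ for some $i$ is equivalent to $\beta_{\max}=1$, which by Remark~\ref{remark:hatalpha}(iv) forces $\hat\alpha=\sum_i p_i\alpha_i$. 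One thing you do better than the paper: your Stage~2 makes fully explicit the within-block nested uniform placement (together with the ``uniform subset of a uniform set is uniform'' identity) that justifies passing from ordering of the block counts $(\tilde X_{in})_i\le(\tilde Y_{in})_i$ to ordering of the full $n$-dimensional vectors. The paper's corresponding inequality $\Pr(\tilde\X_n\le\tilde\Y_n)\ge\Pr(\tilde X_{in}\le(c_i+q_i)\alpha_i n/2\le\tilde Y_{in}\,\forall i)$ is stated ``for each coupling'' but actually holds only for a coupling built precisely by your Stage~2 construction; your write-up removes that gap.
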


\begin{lemma}\label{lem:abovehatalpha}
	Suppose that $\alpha>\hat\alpha$ and $\beta_i\neq\beta_j$ for some $i,j\in 
	\{1,\dots,M\}$. Then $\sup \Pr(\tilde\X_n \leq \tilde\Y_n)\to 0$.
\end{lemma}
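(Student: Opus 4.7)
The plan is to pick one block~$i_0$ where $\tilde X_{i_0n}/n$ concentrates at a strictly larger value than $\tilde Y_{i_0n}/n$; since the block-$i_0$ sum is a monotone coordinate functional, any coupling $(U,V)$ with $U\leq V$ forces $U^{(i_0)}\leq V^{(i_0)}$, and this event can be bounded using marginals alone.

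First, I would invoke Theorem~\ref{thm:lln} and its analogue for $\tilde\Y_n$. Since $\alpha>\hat\alpha\geq\sum_i p_i\alpha_i$ by Remark~\ref{remark:hatalpha}(iv), $\tilde X_{in}/n\to c_i\alpha_i$ for every~$i$. Similarly $\tilde Y_{in}/n\to q_i\alpha_i$ if $\alpha\leq\sum_j q_j\alpha_j$ and $\to d_i\alpha_i$ otherwise, where the $d_i$ solve the analogue of~\eqref{eqn:centres} with $p$ replaced by~$q$; call the resulting limit of $\tilde Y_{in}/n$ by $\ell_i^Y$.

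Next I would locate a suitable~$i_0$ by splitting into two cases. In the case $\alpha\leq\sum_j q_j\alpha_j$ (so $\ell_i^Y=q_i\alpha_i$), Remark~\ref{remark:hatalpha}(iii) gives $A<\beta_{\max}$, and any $i_0\in I$ works since $c_{i_0}=p_{i_0}/(p_{i_0}+A(1-p_{i_0}))>p_{i_0}/(p_{i_0}+\beta_{\max}(1-p_{i_0}))=q_{i_0}$. In the case $\alpha>\sum_j q_j\alpha_j$ (so $\ell_i^Y=d_i\alpha_i$), a short cross-multiplication shows $c_i=d_i\Leftrightarrow \beta_i=A/A'$, where $A'$ is the analogue of~$A$ for the $q$-system. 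Since the $\beta_i$ are not all equal, $c_i=d_i$ cannot hold for every~$i$; combined with $\sum_i c_i\alpha_i=\sum_i d_i\alpha_i=\alpha$, this forces some $i_0$ with $c_{i_0}>d_{i_0}$.

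With such $i_0$ in hand, fix $t\in(\ell_{i_0}^Y,c_{i_0}\alpha_{i_0})$, so that $\Pr(\tilde X_{i_0n}\leq tn)\to 0$ and $\Pr(\tilde Y_{i_0n}\geq tn)\to 0$ by the laws of large numbers above. For any coupling $(U,V)$, the inclusion
\[
\{U\leq V\}\subseteq \{U^{(i_0)}\leq tn\}\cup\{V^{(i_0)}\geq tn\}
\]
together with the identification of the marginals yields $\Pr(U\leq V)\leq \Pr(\tilde X_{i_0n}\leq tn)+\Pr(\tilde Y_{i_0n}\geq tn)\to 0$, uniformly in the coupling. The main obstacle is the second case: one must rule out the possibility that conditioning both vectors on a large number of successes pushes every $d_i$ above the corresponding~$c_i$. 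The cross-multiplication identity $c_i=d_i\Leftrightarrow \beta_i=A/A'$ is the clean way to convert the combinatorial hypothesis ``$\beta_i$ not all equal'' into exactly the strict separation that the coupling argument needs.
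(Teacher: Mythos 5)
Your proof is correct and follows essentially the same route as the paper: split on whether $\alpha$ is below or above $\sum_j q_j\alpha_j$, locate a block $i_0$ where the limit of $\tilde X_{i_0n}/n$ strictly exceeds that of $\tilde Y_{i_0n}/n$, and use the marginal law of large numbers together with the fact that componentwise ordering forces ordering of block sums. The only (minor) difference is cosmetic: in the second case you use the identity $c_i=d_i\Leftrightarrow\beta_i=A/A'$, whereas the paper derives the same conclusion from the three-way identity $\frac{1-d_i}{d_i}\frac{d_j}{1-d_j}\beta_j=\frac{1-q_i}{q_i}\frac{p_j}{1-p_j}=\beta_i\frac{1-c_i}{c_i}\frac{c_j}{1-c_j}$; both rule out $c_i\equiv d_i$, after which $\sum_ic_i\alpha_i=\sum_id_i\alpha_i$ forces some $c_{i_0}>d_{i_0}$.
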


\begin{lemma}\label{lem:aroundhatalpha}
	Suppose that $\alpha = \hat\alpha$ and $\beta_i\neq\beta_j$ for some 
	$i,j\in \{1,\dots,M\}$. Suppose furthermore that $(k_n - \hat k_n) / 
	\sqrt{n}\to K$ for some $K\in [-\infty,\infty]$. Then $\sup \Pr(\tilde\X_n 
	\leq \tilde\Y_n)\to \inf_{z\in\R} F_K(z) - \Phi(z/a) + 1$.
\end{lemma}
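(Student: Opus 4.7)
The plan is to reduce the multi-dimensional coupling problem to a one-dimensional one involving sums over the blocks in $I$, apply the classical one-dimensional Strassen formula, and transfer the resulting optimal coupling back to finite~$n$. A first reduction comes from observing that within any block, the positions of successes are conditionally uniform given the block count; thus any coupling of the block-count vectors $(\tilde X_{in})_{i=1}^M$ and $(\tilde Y_{in})_{i=1}^M$ satisfying $\tilde X_{in}\leq\tilde Y_{in}$ for every~$i$ lifts (by placing the successes of $\tilde\X_n$ as a uniformly chosen sub-configuration of those of $\tilde\Y_n$ block by block) to a coupling of the full vectors with $\tilde\X_n\leq\tilde\Y_n$, giving $\sup\Pr(\tilde\X_n\leq\tilde\Y_n)=\sup\Pr(\tilde X_{in}\leq\tilde Y_{in}\ \forall i)$. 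For $i\notin I$ one has $c_i<q_i$ by Remark~\ref{remark:hatalpha}(ii), so Theorem~\ref{thm:lln} yields $\tilde X_{in}/n\to c_i\alpha_i<q_i\alpha_i$ and $\tilde Y_{in}/n\to q_i\alpha_i$ in probability, whence $\tilde X_{in}\leq\tilde Y_{in}$ automatically holds with probability tending to~$1$ under any coupling of the marginals. The problem thus reduces asymptotically to couplings achieving coordinatewise domination on the blocks in~$I$.

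Set $\hat S_n:=\sum_{i\in I}(\tilde X_{in}-q_im_{in})/\sqrt n$ and $\hat T_n:=\sum_{i\in I}(\tilde Y_{in}-q_im_{in})/\sqrt n$. Theorems~\ref{thm:limitabovemean}--\ref{thm:limitaroundmean}, combined with Corollary~\ref{cor:cin-ci} (producing the deterministic drift $z_K$ in case $\alpha>\sum p_i\alpha_i$), give $\hat S_n\to S_\infty$ weakly with $S_\infty$ having distribution function~$F_K$. On the $\tilde\Y_n$-side, Remark~\ref{remark:hatalpha}(iv) (the $\beta_i$ are not all equal) gives $\hat\alpha<\sum q_i\alpha_i$ and hence $(k_n-\Ex\Sigma^Y_n)/\sqrt n\to-\infty$; Theorem~\ref{thm:limitaroundmean} then makes the conditioning asymptotically vacuous, so the $\tilde Y$-blocks in $I$ are asymptotically independent Gaussian and $\hat T_n\to T_\infty\sim N(0,a^2)$. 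Since coordinatewise domination implies sum domination,
\[
\sup\Pr(\tilde X_{in}\leq\tilde Y_{in}\ \forall i\in I)\leq\sup\Pr(\hat S_n\leq\hat T_n),
\]
and by continuity of the limits the one-dimensional Strassen identity $\sup\Pr(X\leq Y)=\inf_z[F_X(z)-F_Y(z)+1]$ applied in the limit yields $\sup\Pr(\hat S_n\leq\hat T_n)\to\inf_z[F_K(z)-\Phi(z/a)+1]$, establishing the upper bound.

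The main obstacle is the matching lower bound, for which I would exploit a structural Gaussian coincidence. A direct calculation---marginalizing the limit densities $f/\int f\,d\nu_0$ (case $\alpha>\sum p_i\alpha_i$) or $h\I_{H_K}/\int h\I_{H_K}\,d\lambda$ (case $\alpha=\sum p_i\alpha_i$) over the coordinates $j\notin I$---shows that both $(W_i)_{i\in I}$ and the $I$-marginal of the limit of the centered $\tilde\X_n$-blocks are Gaussian with the \emph{same} conditional distribution given their sum: conditional mean $(\sigma_i^2/a^2)\cdot(\text{sum})$ with $\sigma_i^2=q_i(1-q_i)\alpha_i$, and common residual covariance $\sigma_i^2\delta_{ij}-\sigma_i^2\sigma_j^2/a^2$ on the hyperplane $\{\sum_{i\in I}v_i=0\}$. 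Consequently the optimal limit coupling is obtained by coupling $(S_\infty,T_\infty)$ via the one-dimensional Strassen coupling and using a \emph{shared} Gaussian residual $(R_i)_{i\in I}$: writing the limit of the centered $\tilde X$-blocks as $\tilde U_i=(\sigma_i^2/a^2)S_\infty+R_i$ and $W_i=(\sigma_i^2/a^2)T_\infty+R_i$, one has $W_i-\tilde U_i=(\sigma_i^2/a^2)(T_\infty-S_\infty)\geq 0$ for every $i\in I$ on $\{S_\infty\leq T_\infty\}$, so coordinatewise domination on $I$ is achieved with probability $\sup\Pr(S_\infty\leq T_\infty)$. A Skorokhod representation (or Wasserstein-gluing) argument then transfers this limit coupling to a sequence of couplings of $(\tilde\X_n,\tilde\Y_n)$ at finite~$n$ achieving $\Pr(\tilde\X_n\leq\tilde\Y_n)\geq\inf_z[F_K(z)-\Phi(z/a)+1]-o(1)$. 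This finite-$n$ transfer is the technically most delicate step, because at finite~$n$ the conditional distribution of $(\tilde X_{in})_{i\in I}$ given its sum is only approximately Gaussian and the residual-sharing construction has to be implemented approximately.
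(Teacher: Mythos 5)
Your overall architecture agrees with the paper's: reduce to the $I$-blocks (handling $i\notin I$ by the law of large numbers), reduce further to the sums over $I$, and apply R\"uschendorf's one-dimensional formula (Lemma~\ref{lemma:couplingprobability}). The upper bound, the identification of the weak limits $S_\infty\sim F_K$ and $T_\infty\sim N(0,a^2)$, and the role of Remark~\ref{remark:hatalpha}(iv) and Corollary~\ref{cor:cin-ci} are all correct and match the paper's Steps~1, 3 and~4.

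The genuine gap is the lower bound. Your plan is to build the optimal coupling \emph{at the limit} (sharing a Gaussian residual orthogonal to the sum) and then transfer it back to finite~$n$; you yourself flag this transfer as ``technically most delicate,'' and that difficulty is real, because the conditional law of the $\tilde X$-blocks given their sum is not Gaussian at finite~$n$, so the shared-residual construction has no finite-$n$ analogue as stated. But the ``structural Gaussian coincidence'' you notice is not a fortuitous limit phenomenon at all --- it is a manifestation of the fact that $\beta_i=\beta_{\max}$ for all $i\in I$, and this has an \emph{exact} finite-$n$ consequence which makes the whole transfer step unnecessary. Precisely because the $\beta_i$ ($i\in I$) are equal, Proposition~\ref{prop:equallaws} applied to the restricted vectors $\X_{In}$, $\Y_{In}$ gives $\Law(\X_{In}\mid \sum_{i\in I}X_{in}=m)=\Law(\Y_{In}\mid \sum_{i\in I}Y_{in}=m)$ for every $m$ and every~$n$, and Lemma~\ref{lem:erik} then produces an explicit monotone coupling across different sum values. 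A short computation (conditioning on $\{\Sigma_n\geq k_n\}$ factors through the $I$-sum) shows this transfers verbatim to the conditioned vectors $\tilde\X_{In},\tilde\Y_{In}$, yielding the identity
\[
\sup \Pr(\tilde \X_{In} \leq \tilde \Y_{In}) = \sup \Pr\bigl(\textstyle\sum_{i\in I} \tilde X_{in} \leq \sum_{i\in I} \tilde Y_{in}\bigr)
\]
\emph{at every finite $n$}. With that equality in hand, the lower bound follows by the same Strassen/R\"uschendorf step you already use for the upper bound, together with uniform convergence of the distribution functions on compacts --- no limit coupling and no Skorokhod/Wasserstein gluing is needed. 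Replacing your approximate limit-coupling argument by this exact finite-$n$ reduction (the paper's Step~2) closes the gap.
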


\begin{lemma}\label{lem:PK}
	If $\alpha = \hat\alpha$ and $\beta_i\neq\beta_j$ for some $i,j\in 
	\{1,\dots,M\}$, then
	\[
		\inf_{z\in\R} F_K(z)-\Phi(z/a)+1 = \begin{cases}
			\hfil1 & \text{if $K=-\infty$}, \\
			P_K & \text{if $K\in\R$}, \qquad \text{where $0<P_K<1$}, \\
			\hfil0 & \text{if $K=\infty$}.
		\end{cases}
	\]
\end{lemma}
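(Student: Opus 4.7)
The plan is to reduce the lemma to a one-variable calculus problem: set $g(z):=F_K(z)-\Phi(z/a)+1$, so that we must compute $\inf_{z\in\R}g(z)$. Write $\phi:=\Phi'$ for the standard normal density. The cases $K=\pm\infty$ are handled directly. When $K=\infty$, $F_K\equiv 0$ and $g(z)=1-\Phi(z/a)\to 0$ as $z\to\infty$. When $K=-\infty$, one checks that $F_{-\infty}(z)=\Phi(z/a)$ in the case $\alpha=\sum p_i\alpha_i$ (since $H_{-\infty}=\R^M$ and the marginal of $\sum_{i\in I}U_i$ under $h/\int h\,d\lambda$ is $N(0,a^2)$, using $c_i=q_i=p_i$ for $i\in I$ when $\beta_{\max}=1$), while $F_{-\infty}\equiv 1$ in the case $\alpha>\sum p_i\alpha_i$ (since $z_K=(a^2/c^2)K\to -\infty$); either way $\inf g=1$.

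For $K\in\R$, I first derive an explicit formula for $F_K$. When $\alpha=\sum p_i\alpha_i$, the measure $h/\int h\,d\lambda$ is the law of independent $U_i\sim N(0,p_i(1-p_i)\alpha_i)$; setting $S:=\sum_{i\in I}U_i$ and $T:=\sum_{i=1}^M U_i$, we have $\mathrm{Var}(S)=a^2$, $\mathrm{Var}(T)=c^2$, and $T=S+S'$ with $S'\sim N(0,b^2)$ independent of $S$, whence
\[
F_K(z)=\frac{1}{1-\Phi(K/c)}\int_{-\infty}^{z/a}\phi(u)\bigl(1-\Phi((K-au)/b)\bigr)\,du.
\]
When $\alpha>\sum p_i\alpha_i$, the measure $f/\int f\,d\nu_0$ is the conditional law of independent $U_i\sim N(0,c_i(1-c_i)\alpha_i)$ given $\sum_i U_i=0$. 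The standard conditional-covariance formula, together with the identities $\sum_{i\in I}c_i(1-c_i)\alpha_i=a^2$ and $\sum_i c_i(1-c_i)\alpha_i=c^2$ (which follow from $c_i=q_i$ for $i\in I$ and a direct computation of $c_i(1-c_i)$ from the definition of $c_i$), yields $S\sim N(0,a^2b^2/c^2)$, so $F_K(z)=\Phi(c(z-z_K)/(ab))=\Phi(cz/(ab)-aK/(bc))$.

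Solving $g'(z)=0$ is then routine. In the case $\alpha>\sum p_i\alpha_i$, matching the two Gaussian densities in $g'(z)=0$ leads to the quadratic $(z-K)^2=b^2R_K^2/c^2$ with roots $K\pm bR_K/c$; substituting $z_*=K-bR_K/c$ into $g$ reproduces the second expression for $P_K$. In the other case, $g'(z)=0$ reduces to $(K-z)/b=K/c$, i.e.\ $z^*=K(c-b)/c$, and substitution reproduces the first expression for $P_K$ (after the change of variables $u=s/a$ in the integral defining $F_K$). A sign analysis of $g'$ (using monotonicity of $\Phi$ and of the $\phi$-ratio) identifies $z_*$ (respectively $z^*$) as a strict local minimum, and since $g(\pm\infty)=1$, it is the global minimum; hence $\inf g=P_K$. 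The bound $P_K>0$ is immediate from $P_K\geq 1-\Phi(z_*/a)>0$. For $P_K<1$ in the case $\alpha>\sum p_i\alpha_i$, one computes $\Phi(z_*/a)-F_K(z_*)=\Phi(K/a-bR_K/(ac))-\Phi(bK/(ac)-R_K/a)>0$, which follows from $b<c$ and $R_K>|K|$ (strict because $c^2\log(c^2/b^2)>0$); the other case is analogous.

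The main technical obstacle is correctly identifying the law of $S=\sum_{i\in I}U_i$ under the singular probability measure $f/\int f\,d\nu_0$: one has to translate the "Lebesgue density on the hyperplane $S_0$" description into a conditional-Gaussian picture so as to extract the variance $a^2b^2/c^2$. Once this is in place, everything else reduces to standard manipulations with $\Phi$ and $\phi$, and to the sign analysis that localises the global minimum.
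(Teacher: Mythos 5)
Your proposal is correct and follows essentially the same route as the paper: treat $K=\pm\infty$ directly, identify $F_K$ explicitly as a (conditional) Gaussian distribution function using the variances $a^2$, $b^2$, $c^2$, and minimize $F_K(z)-\Phi(z/a)$ by calculus to land on the stated $P_K$. The only differences are cosmetic — you invoke the conditional-covariance formula where the paper writes out the $(M-1)\times(M-1)$ covariance matrix and sums its entries, and your arguments for $0<P_K<1$ (via $P_K\geq 1-\Phi(z_*/a)$ and via a direct comparison of $\Phi(z_*/a)$ and $F_K(z_*)$ using $R_K>|K|$) differ slightly from the paper's (which uses $F_K(z_{\min})>0$, the sign of the integrand in the $K$-integral, and the observation that $F_K$ has smaller variance than $\Phi(\cdot/a)$) but are equally valid.
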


The constant~$a$ in Lemma~\ref{lem:aroundhatalpha} is the constant defined 
in~\eqref{eqn:defa}. The infimum in Lemma~\ref{lem:aroundhatalpha} can 
actually be computed, as Lemma~\ref{lem:PK} states, and attains the values 
stated in Theorem~\ref{thm:asymptstochdom}, with $P_K$ as defined 
in~\eqref{eqn:defPK}.

We will prove Theorem~\ref{thm:asymptstochdom} by proving each of the Lemmas 
\ref{lem:belowhatalpha}--\ref{lem:PK} in turn. The idea behind the proof of 
Lemma~\ref{lem:belowhatalpha} is as follows. If we do not condition at all, 
then $\X_n\preceq \Y_n$ for every $n\geq 1$. If $\alpha < \sum_{i=1}^M p_i 
\alpha_i$, then the effect of conditioning vanishes in the limit and 
$\sup\Pr(\tilde \X_n\leq \tilde \Y_n) \to 1$ as $n\to\infty$. If $\sum_{i=1}^M 
p_i \alpha_i \leq \alpha < \hat \alpha$, then $c_i<q_i$ for all 
$i\in\{1,\dots,M\}$. Hence, for large~$n$ we have that $\tilde X_{in}$ is 
significantly smaller than~$\tilde Y_{in}$ for all $i\in\{1,\dots,M\}$, from 
which it will again follow that $\sup\Pr(\tilde \X_n \leq \tilde \Y_n)\to 1$.

\begin{proof}[Proof of Lemma \ref{lem:belowhatalpha}]
	First, suppose that $\alpha < \sum_{i=1}^M p_i \alpha_i$. Let $\X_n$ 
	and~$\Y_n$ be defined on a common probability space $(\Omega, \mathcal{F}, 
	P)$ such that $\X_n\leq \Y_n$ on all of~$\Omega$. Pick $\omega_1\in 
	\Omega$ according to the measure $P(\,\cdot\mid \sum_{i=1}^M X_{in}\geq 
	k_n)$ and pick $\omega_2\in \Omega$ independently according to the measure 
	$P(\,\cdot\mid \sum_{i=1}^M Y_{in}\geq k_n)$. If $\omega_2$ is in the 
	event $\bigl\{ \sum_{i=1}^M X_{in}\geq k_n \bigr\}\in \mathcal{F}$, set 
	$\tilde \Y_n(\omega_1, \omega_2) := \Y_n(\omega_1)$, otherwise set $\tilde 
	\Y_n(\omega_1, \omega_2) := \Y_n(\omega_2)$. Set $\tilde \X_n(\omega_1, 
	\omega_2) := \X_n(\omega_1)$ regardless of the value of~$\omega_2$. It is 
	easy to see that this defines a coupling of $\tilde \X_n$ and~$\tilde 
	\Y_n$ on the space $(\Omega\times\Omega, \mathcal{F}\times \mathcal{F})$ 
	with the correct marginals for $\tilde \X_n$ and~$\tilde \Y_n$. Moreover, 
	in this coupling we have $\tilde \X_n\leq \tilde \Y_n$ at least if 
	$\omega_2 \in \bigl\{ \sum_{i=1}^M X_{in}\geq k_n \bigr\}$. Hence
	\[
		\sup \Pr(\tilde\X_n \leq \tilde\Y_n)
		\geq \frac{\Pr(\sum_{i=1}^M X_{in} \geq k_n)}
				{\Pr(\sum_{i=1}^M Y_{in}\geq k_n)},
	\]
	which tends to~$1$ as $n\to \infty$ (e.g. by Chebyshev's inequality).

	Secondly, suppose that $\sum_{i=1}^M p_i \alpha_i \leq \alpha < \hat 
	\alpha$. By Remark~\ref{remark:hatalpha}(i), $c_i < q_i$ for all 
	$i\in\{1,\dots,M\}$. For each coupling of $\tilde \X_n$ and~$\tilde \Y_n$ 
	we have
	\[
		\Pr(\tilde\X_n \leq \tilde\Y_n) \geq \Pr(\tilde X_{in}\leq 
		(c_i+q_i)\alpha_i n/2\leq \tilde Y_{in}\ \forall i\in\{1,\dots,M\}),
	\]
	which tends to~$1$ as $n\to\infty$ by Theorem~\ref{thm:lln} and 
	Remark~\ref{remark:hatalpha}(iv).
\end{proof}

The next lemma, Lemma~\ref{lem:abovehatalpha}, treats the case $\alpha > \hat 
\alpha$. In this case, we have that for large~$n$, $\tilde X_{in}$ is 
significantly larger than~$\tilde Y_{in}$ for some $i\in \{1,\dots,M\}$, from 
which it follows that  $\sup\Pr(\tilde \X_n\leq \tilde \Y_n)\to 0$.

\begin{proof}[Proof of Lemma~\ref{lem:abovehatalpha}]
	First, suppose that $\hat \alpha < \alpha < \sum_{i=1}^M q_i \alpha_i$. 
	Then $c_i > q_i$ for some $i\in \{1,\dots,M\}$ by 
	Remark~\ref{remark:hatalpha}(iii). Hence, by Theorem~\ref{thm:lln} and 
	Remark~\ref{remark:hatalpha}(iv),
	\begin{align*}
		\Pr(\tilde X_{in}\geq (c_i+q_i)\alpha_i n/2) &\to 1, \\
		\Pr(\tilde Y_{in}\geq (c_i+q_i)\alpha_i n/2) &\to 0.
	\end{align*}
	It follows that $\Pr(\tilde\X_n \leq \tilde\Y_n)$ tends to~$0$ uniformly 
	over all couplings.

	Next, suppose that $\alpha \geq \sum_{i=1}^M q_i\alpha_i$ and $\beta_i 
	\neq \beta_j$ for some $i,j\in \{1,\dots,M\}$. Then there exists 
	$i\in\{1,\dots,M\}$ such that $c_i\neq d_i$, since
	\[
		\frac{1-d_i}{d_i} \frac{d_j}{1-d_j} \beta_j
		= \frac{1-q_i}{q_i} \frac{p_j}{1-p_j}
		= \beta_i \frac{1-c_i}{c_i} \frac{c_j}{1-c_j}.
	\]
	In fact, we must have $c_i > d_i$ for some $i\in\{1,\dots,M\}$, because
	$\sum_{i=1}^M c_i \alpha_i = \sum_{i=1}^M d_i \alpha_i$. By 
	Theorem~\ref{thm:lln}, it follows that
	\begin{align*}
		\Pr(\tilde X_{in}\geq (c_i+d_i) \alpha_i n / 2) &\to 1, \\
		\Pr(\tilde Y_{in}\geq (c_i+d_i) \alpha_i n / 2) &\to 0.
	\end{align*}
	Again, $\Pr(\tilde \X_n\leq \tilde \Y_n)$ tends to~$0$ uniformly over all 
	couplings.
\end{proof}

We now turn to the proof of Lemma~\ref{lem:aroundhatalpha}. Under the 
assumptions of this lemma, $c_i = q_i$ for $i\in I$ and $c_i < q_i$ for 
$i\notin I$. The proof proceeds in four steps. In step~1, we show that the 
blocks~$i\notin I$ do not influence the asymptotic behaviour of 
$\sup\Pr(\tilde \X_n\leq \tilde \Y_n)$, because for these blocks, $\tilde 
X_{in}$ is significantly smaller than~$\tilde Y_{in}$ for large~$n$. In 
step~2, we show that the parts of the vectors $\tilde \X_n$ and~$\tilde \Y_n$ 
that correspond to the blocks~$i\in I$ are stochastically ordered, if and only 
if the total numbers of successes in these parts of the vectors are 
stochastically ordered. At this stage, the original problem of stochastic 
ordering of random vectors has been reduced to a problem of stochastic 
ordering of random variables. In step~3, we use our central limit theorems to 
deduce the asymptotic behaviour of the total numbers of successes in the 
blocks~$i\in I$. In step~4, we apply the following lemma, which follows 
from~\cite[Proposition~1]{ruschendorf}, to these total numbers of successes:

\begin{lemma}\label{lemma:couplingprobability}
	Let $X$ and~$Y$ be random variables with distribution functions $F$ 
	and~$G$ respectively. Then we have
	\[
		\sup \Pr(X\leq Y) = \inf_{z\in\R} F(z) - G(z) + 1,
	\]
	where the supremum is taken over all possible couplings of $X$ and~$Y$.
\end{lemma}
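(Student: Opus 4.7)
The plan is to establish the two inequalities separately. For the upper bound, the key observation is elementary: for any coupling $(X,Y)$ of $F$ and $G$ and any $z\in\R$, if $X\leq Y$ and $X>z$ then $Y\geq X>z$, so $\{X\leq Y\}\subseteq \{X\leq z\}\cup \{Y>z\}$, and consequently
\[
\Pr(X\leq Y) \leq F(z) + (1-G(z)) = F(z) - G(z) + 1.
\]
Taking the infimum over $z$ on the right and the supremum over couplings on the left yields $\sup\Pr(X\leq Y) \leq \inf_{z\in\R} F(z)-G(z)+1$.

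For the matching lower bound, setting $m := \sup_z (G(z)-F(z))^+$, the task is to exhibit, for every $\eps>0$, a coupling with $\Pr(X\leq Y)\geq 1-m-\eps$. For distributions with finite support this is a finite linear program solvable by a greedy matching: sweep through the atoms of $G$ from smallest to largest, and at each step pair the remaining mass of the current $G$-atom with the smallest unmatched mass of $F$ lying at or below it, discarding whatever $G$-mass cannot be matched. A telescoping calculation relating the mass discarded up to position $z$ with the running difference $G(z)-F(z)$ shows that the total discarded mass is exactly $m$, so the coupling obtained by pairing the leftover $F$-mass with the discarded $G$-mass (in an arbitrary way) has the correct marginals and places probability $1-m$ on $\{X\leq Y\}$. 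The general case then follows by weak approximation by finitely supported measures.

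The main obstacle is the construction of this optimal coupling: the naive monotone (quantile) coupling $X=F^{-1}(U)$, $Y=G^{-1}(U)$ fails in general, because the optimal transport plan for the cost function $c(x,y)=\I\{x>y\}$ is not always the increasing rearrangement. For instance, when $F$ is uniform on $\{0,1\}$ and $G$ is uniform on $\{-1,0\}$, the quantile coupling yields $\Pr(X>Y)=1$, whereas the true value of $\sup\Pr(X\leq Y)$ is $\tfrac12$, attained instead by the \emph{antitone} coupling. For this reason, as noted in the text, the cleanest route is to invoke Proposition~1 of \cite{ruschendorf} directly; it provides the identity as a special case of the Monge--Kantorovich duality for this cost and bypasses the explicit greedy construction entirely.
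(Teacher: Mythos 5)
Your proposal is correct and, in the end, adopts the same route as the paper: citing \cite[Proposition~1]{ruschendorf}. The paper gives nothing beyond that one-line citation, whereas you supply genuine mathematical content along the way. The upper bound is a clean elementary argument: for any coupling and any $z$, the inclusion $\{X\leq Y\}\subseteq\{X\leq z\}\cup\{Y>z\}$ gives $\Pr(X\leq Y)\leq F(z)-G(z)+1$. The greedy construction for the discrete lower bound is also sound; one can check inductively that, after processing all $G$-atoms at positions $\leq y$, the cumulative discarded mass equals $\max_{y'\leq y}\bigl(G(y')-F(y')\bigr)^{+}$, so the total discard is exactly $m=\sup_z\bigl(G(z)-F(z)\bigr)$ and the resulting coupling achieves $\Pr(X\leq Y)=1-m$. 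Your example with $F$ uniform on $\{0,1\}$ and $G$ uniform on $\{-1,0\}$ correctly shows that the quantile coupling gives $\Pr(X\leq Y)=0$ while the optimum is $\tfrac12$, attained by the antitone coupling in that particular case. The only part left as an assertion is the passage from finitely supported laws to general ones via weak approximation; this needs a little care (choose discretizations that do not inflate $m$, extract a subsequential weak limit of the optimal couplings, and apply the portmanteau inequality to the closed set $\{(x,y)\colon x\leq y\}$). Since you explicitly defer to R\"uschendorf for the general case --- exactly as the paper does --- this is not a gap in substance, but your sketch already contains more of a proof than the paper itself provides.
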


\begin{proof}[Proof of Lemma \ref{lem:aroundhatalpha}]
	Write $m_{In} := \sum_{i\in I} m_{in}$. Let $\X_{In}$ and~$\tilde \X_{In}$ 
	denote the $m_{In}$-dimensional subvectors of $\X_n$ and~$\tilde \X_n$, 
	respectively, consisting of the components that belong to  the 
	blocks~$i\in I$. Define $\Y_{In}$ and~$\tilde\Y_{In}$ analogously.

	\textbf{Step~1}. Note that for each coupling of $\tilde \X_n$ and~$\tilde 
	\Y_n$,
	\begin{multline}\label{eqn:sumerror}
		\begin{aligned}
		\Pr(\tilde \X_n \leq \tilde \Y_n)
			&\geq \Pr(\tilde\X_{In} \leq\tilde \Y_{In}, \tilde X_{in}\leq (c_i 
			+ q_i)\alpha_i n/2\leq \tilde Y_{in}\ \forall i\notin I) \\
			&\geq \Pr(\tilde\X_{In} \leq \tilde\Y_{In}) - \null
		\end{aligned}\\
			\sum_{i\notin I} \Bigl\{ \Pr\Bigl( \tilde X_{in} > \frac{c_i + 
			q_i}2 \alpha_i n \Bigr) + \Pr\Bigl( \tilde Y_{in} < \frac{c_i + 
			q_i}2 \alpha_i n \Bigr) \Bigr\}.
	\end{multline}
	By Remark~\ref{remark:hatalpha}(ii), $c_i < q_i$ for $i\notin I$. Hence, 
	it follows from Remark~\ref{remark:hatalpha}(iv) and Theorem~\ref{thm:lln} 
	that the sum in~\eqref{eqn:sumerror} tends to~$0$ as $n\to\infty$, 
	uniformly over all couplings. Since clearly $\sup\Pr(\tilde \X_n \leq 
	\tilde \Y_n) \leq \sup\Pr(\tilde \X_{In} \leq \tilde \Y_{In})$,
	\[
		\left| \sup \Pr(\tilde \X_n \leq \tilde \Y_n) - \sup \Pr(\tilde 
		\X_{In} \leq \tilde \Y_{In}) \right| \to 0,
	\]
	where the suprema are taken over all possible couplings of $(\tilde 
	\X_n,\tilde \Y_n)$ and $(\tilde \X_{In},\tilde \Y_{In})$, respectively.

	\textbf{Step~2}. The~$\beta_i$ for $i\in I$ are all equal. Hence, by 
	Proposition~\ref{prop:equallaws} and Lemma~\ref{lem:erik} we have for 
	$m\in \{0,1,\dots,m_{In}\}$ and $\ell\in \{0,1,\dots,m_{In}-m\}$
	\begin{equation}\label{eqn:XorXtilde}
		\textstyle \Law(\X_{In} | \sum_{i\in I} X_{in} = m) \preceq 
		\Law(\Y_{In} | \sum_{i\in I} Y_{in} = m+\ell).
	\end{equation}
	Now let $B$ be any collection of vectors of length~$m_{In}$ with exactly 
	$m$~components equal to~$1$ and $m_{In}-m$ components equal to~$0$. Then
	\[\begin{split}
		\Pr(\tilde\X_{In}\in B)
		&= {\textstyle \Pr(\X_{In}\in B\mid \sum_{i=1}^M X_{in}\geq k_n)} \\
		&= \frac{\Pr(\X_{In}\in B) \Pr(\sum_{i\notin I} X_{in}\geq k_n-m)}
			{\Pr(\sum_{i=1}^M X_{in}\geq k_n)}.
	\end{split}\]
	Taking $C$ to be the collection of all vectors in $\{0,1\}^{m_{In}}$ with  
	exactly $m$~components equal to~$1$, we obtain
	\[
		{\textstyle\Pr(\tilde\X_{In}\in B\mid\sum_{i\in I}\!\tilde X_{in}=m)}
		= \frac{\Pr(\tilde\X_{In}\in B)}{\Pr(\tilde\X_{In}\in C)}
		= {\textstyle \Pr(\X_{In}\in B\mid \sum_{i\in I}\!X_{in} = m)},
	\]
	and likewise for $\Y_{In}$ and~$\tilde\Y_{In}$. Hence, 
	\eqref{eqn:XorXtilde} is equivalent to
	\[
		\textstyle \Law(\tilde \X_{In} | \sum_{i \in I} \tilde X_{in} = m) 
		\preceq \Law(\tilde \Y_{In} | \sum_{i\in I} \tilde Y_{in} = m+\ell).
	\]
	With a similar argument as in the proof of 
	Proposition~\ref{prop:stochdom}, it follows that
	\[
		\textstyle \sup \Pr(\tilde \X_{In} \leq \tilde \Y_{In}) = \sup 
		\Pr(\sum_{i\in I} \tilde X_{in} \leq \sum_{i\in I} \tilde Y_{in}).
	\]

	\textbf{Step~3}. First observe that by Remark~\ref{remark:hatalpha}(iv), 
	$\alpha < \sum_{i=1}^M q_i\alpha_i$. Hence, by 
	Theorem~\ref{thm:limitaroundmean} (note that $(k_n-\Ex(\sum_{i=1}^M 
	Y_{in}))/\sqrt n\to -\infty$) and the continuous mapping theorem,
	\begin{equation}\label{eqn:limitY}
		\textstyle \Pr(\sum_{i\in I}(\tilde Y_{in}-q_im_{in}) / \sqrt{n} \leq 
		z) \to \Phi(z/a) \quad \text{for every $z\in\R$}.
	\end{equation}

	Next observe that by Remark~\ref{remark:hatalpha}(ii), $c_i = q_i$ for 
	$i\in I$ and $A = \beta_{\max}$, from which it follows that $\hat k_n = 
	\sum_{i=1}^M c_im_{in}$. Hence, Corollary~\ref{cor:cin-ci} gives
	\begin{equation}\label{eqn:limitXshift}
		\textstyle \sum_{i\in I} (c_{in} - q_i) m_{in} / \sqrt{n} \to z_K,
	\end{equation}
	with $z_K$ as defined in~\eqref{eqn:defzK}. In the case $\alpha > 
	\sum_{i=1}^M p_i\alpha_i$, Theorem~\ref{thm:limitabovemean}, 
	\eqref{eqn:limitXshift} and the continuous mapping theorem now immediately 
	imply
	\begin{equation}\label{eqn:limitX}
		\textstyle \Pr(\sum_{i\in I}(\tilde X_{in}-q_im_{in}) / \sqrt{n} \leq 
		z) \to F_K(z) \quad \text{for every $z\in\R$}.
	\end{equation}
	Note that if $K=\pm\infty$, $F_K$ is degenerate in this case: we have 
	$F_K(z)=1$ for all $z\in\R$ if $K=-\infty$ and $F_K(z)=0$ for all $z\in\R$ 
	if $K=\infty$.

	Now consider the case $\alpha = \sum_{i=1}^M p_i\alpha_i$. By 
	Remark~\ref{remark:hatalpha}(iv), in this case we have $\beta_{\max}=1$, 
	which implies that $\hat k_n = \sum_{i=1}^M p_im_{in} = \Ex(\Sigma_n)$ and 
	$p_i = q_i$ for all $i\in \{1,\dots,M\}$. Hence, if $K=\infty$, then 
	\eqref{eqn:limitXshift} and Theorem~\ref{thm:limitabovemean} again 
	imply~\eqref{eqn:limitX} with $F_K(z)=0$ everywhere. If $K\in 
	[-\infty,\infty)$, then we obtain~\eqref{eqn:limitX} directly from 
	Theorem~\ref{thm:limitaroundmean}; $F_K$ is non-degenerate in this case 
	(also for $K=-\infty$).

	\textbf{Step~4}. The distribution functions on the left-hand sides of 
	\eqref{eqn:limitY} and~\eqref{eqn:limitX} are non-decreasing and bounded 
	between $0$ and~$1$, hence they converge uniformly on compact sets. It 
	follows by Lemma~\ref{lemma:couplingprobability} that
	\[
		\textstyle \sup \Pr(\sum_{i\in I} \tilde X_{in} \leq \sum_{i\in I} 
		\tilde Y_{in}) \to \inf_{z\in\R} F_K(z) - \Phi(z/a) + 1. \qedhere
	\]
\end{proof}

Finally, we turn to the proof of Lemma~\ref{lem:PK}. The key to computing the 
infimum of $F_K(z)-\Phi(z/a)+1$ is to first express the distribution 
function~$F_K$, defined  in~\eqref{eqn:defFK}, in a simpler form.

\begin{proof}[Proof of Lemma~\ref{lem:PK}]
	In the case $\alpha > \sum_{i=1}^M p_i\alpha_i$ and $K=-\infty$, $F_K$ 
	is~$1$ everywhere, hence $\inf_{z\in\R} F_K(z)-\Phi(z/a) + 1 = 1$. In the 
	case $K=\infty$, $F_K$ is~$0$ everywhere, hence $\inf_{z\in\R} F_K(z) - 
	\Phi(z/a) + 1 = 0$. We will now study the remaining cases.
	
	Consider the case $\alpha = \hat\alpha = \sum_{i=1}^M p_i \alpha_i$ and 
	$K\in [-\infty,\infty)$. Let $\vect{Z} = (Z_1,\dots,Z_M)$ be a random 
	vector which has the multivariate normal distribution with density $h / 
	\int h\,d\lambda$. By Remark~\ref{remark:hatalpha}(iv) we have 
	$\beta_{\max} = 1$. Note that therefore, $\tfrac1a \sum_{i\in I}Z_i$, 
	$\tfrac1b \sum_{i\notin I}Z_i$ and $\tfrac1c \sum_{i=1}^M Z_i$, with $a$, 
	$b$ and~$c$ as defined in~\eqref{eqn:defabc}, all have the standard normal 
	distribution. Moreover, $\sum_{i\in I}Z_i$ and $\sum_{i\notin I}Z_i$ are 
	independent.
	
	For $K=-\infty$, it follows that $F_K(z) = \Phi(z/a)$, hence 
	$\inf_{z\in\R} F_K(z) - \Phi(z/a) + 1 = 1$. For $K\in\R$, observe that 
	$\vect{Z}\in H_K$ is equivalent with $\tfrac1c \sum_{i=1}^M Z_i \geq K/c$. 
	Likewise, $\vect{Z} \in H_K\cap \{u\in\R^M\colon \sum_{i\in I} u_i\leq 
	z\}$ is equivalent with $\tfrac1a \sum_{i\in I}Z_i \leq z/a$ and $\tfrac1b 
	\sum_{i\notin I}Z_i \geq (K - \sum_{i\in I}Z_i)/b$. It follows that
	\[\begin{split}
		F_K(z)
		&= \frac{\int h\,d\lambda}{\int_{H_K} h\,d\lambda} 
		\frac{\int_{H_K\cap\{\sum_{i\in I}u_i\leq z\}} h(u)\,d\lambda(u)}
		{\int h\,d\lambda} \\
		&= \frac1{1-\Phi(K/c)} \int_{-\infty}^{z/a} \int_{\frac{K-a 
		u}{b}}^\infty \frac{e^{-u^2 / 2}}{\sqrt{2\pi}} 
		\frac{e^{-v^2/2}}{\sqrt{2\pi}} \,dv\,du \\
		&= \int_{-\infty}^{z/a} \frac{e^{-u^2/2}}{\sqrt{2\pi}} \frac{1 - 
		\Phi\left( \frac{K-a u}{b} \right)} {1-\Phi\left(\frac{K}{c}\right)} 
		\,du,
	\end{split}\]
	hence
	\begin{equation}\label{eqn:PK1}
		F_K(z) - \Phi(z/a)
		= \int_{-\infty}^{z/a} \frac{e^{-u^2/2}}{\sqrt{2\pi}} \frac{\Phi\left( 
		\frac{K}{c} \right) - \Phi\left( \frac{K-a u}{b} \right)} 
		{1-\Phi\left(\frac{K}{c}\right)} \,du.
	\end{equation}
	Clearly, the derivative of this expression with respect to~$z$ is~$0$ if 
	and only if $(K-z)/b = K/c$, that is, $z = z_{\min} = K-bK/c$. Plugging 
	this value for~$z$ into~\eqref{eqn:PK1} shows that $\inf_{z\in\R} F_K(z) -  
	\Phi(z/a) + 1 = P_K$, with $P_K$ as defined in~\eqref{eqn:defPK}. 
	Moreover, $P_K>0$ because $F_K(z_{\min})>0$, and $P_K<1$ because the 
	integrand in~\eqref{eqn:PK1} is negative for $u < z_{\min}/a$.

	Finally, consider the case $\alpha > \sum_{i=1}^M p_i \alpha_i$ and 
	$K\in\R$. This time, let $\vect{Z} = (Z_1,\dots,Z_M)$ be a random vector 
	which has the singular multivariate normal distribution with density 
	$f/\int f\,d\nu_0$ with respect to~$\nu_0$. Then a little computation 
	shows that $(Z_1, \dots, Z_{M-1})$ has a multivariate normal distribution 
	with mean~$0$ and a covariance matrix~$\Sigma$ given by
	\[
		\left\{ \begin{aligned}
			\rule[-5ex]{0pt}{0pt}
			\Sigma_{ii} &= \displaystyle \frac{\sigma_i^2 \sum_{k=1, k\neq 
			i}^M \sigma_k^2} {\sum_{k=1}^M \sigma_k^2}
			&&&& \text{for $i \in \{1,\dots,M-1\}$}, \\
			\Sigma_{ij} &= \displaystyle \frac{-\sigma_i^2 
			\sigma_j^2}{\sum_{k=1}^M \sigma_k^2}
			&&&& \text{for $i,j \in \{1,\dots,M-1\}$ with $i\neq j$},
		\end{aligned} \right.
	\]
	where $\sigma_i^2 = c_i(1-c_i)\alpha_i$ for $i\in \{1,\dots,M\}$. 
	Similarly, every subvector of~$\vect{Z}$ of dimension less than~$M$ has a 
	multivariate normal distribution.

	By the definition~\eqref{eqn:defFK} of~$F_K$, $z_K + \sum_{i\in I} Z_i$ 
	has distribution function~$F_K$. Since $\beta_i\neq \beta_j$ for some $i,j 
	\in\{1,\dots,M\}$, we have $\car{I}\leq M-1$. It follows that $\sum_{i\in 
	I} Z_i$ has a normal distribution with mean~$0$ and variance
	\begin{equation}\label{eqn:variance}
		\sum_{i\in I} \frac{\sigma_i^2 \sum_{k=1, k\neq i}^M \sigma_k^2} 
		{\sum_{k=1}^M \sigma_k^2} +
		\sum_{i\in I} \sum_{j\in I\setminus\{i\}} \frac{-\sigma_i^2 
		\sigma_j^2}{\sum_{k=1}^M \sigma_k^2} =
		\frac{(\sum_{i\in I} \sigma_i^2)(\sum_{i\notin I} 
		\sigma_i^2)}{\sum_{i=1}^M \sigma_i^2}.
	\end{equation}
	By Remark~\ref{remark:hatalpha}(ii), $A=\beta_{\max}$ and hence for 
	$i\in\{1,\dots,M\}$,
	\[
		\sigma_i^2 = c_i (1-c_i) \alpha_i = \frac{\beta_{\max} 
		p_i(1-p_i)\alpha_i}{(p_i + \beta_{\max} (1-p_i))^2}.
	\]
	It follows that the variance~\eqref{eqn:variance} is equal to $a^2b^2 / 
	c^2$, with $a$, $b$, and~$c$ as defined in~\eqref{eqn:defabc}. 
	Furthermore, $z_K = a^2K/c^2$. We conclude that $F_K$ is the distribution 
	function of a normally distributed random variable with mean $a^2K/c^2$ 
	and variance $a^2b^2/c^2$, so that $F_K(z) = \Phi\bigl( \frac{c}{ab} ( 
	z-a^2K/c^2 ) \bigr)$. Since $a^2 b^2 / c^2 < a^2$, we see that $F_K(z) < 
	\Phi(z/a)$ for small enough~$z$. Hence $F_K(z) - \Phi(z/a)$ attains a 
	minimum value which is strictly smaller than~$0$. This minimum is strictly 
	larger than~$-1$ because $F_K(z)>0$ for all $z\in\R$.

	To find the minimum, we compute the derivative of $F_K(z)-\Phi(z/a)$ with 
	respect to~$z$. It is not difficult to verify that the minimum is attained 
	for
	\[
		z = z_{\min} = K - \frac{b}{c} \sqrt{K^2 + c^2 \log(c^2/b^2)},
	\]
	from which it follows that $\inf_{z\in\R} F_K(z) - \Phi(z/a) + 1 = P_K$, 
	with $P_K$ as defined in~\eqref{eqn:defPK}. From the remarks above we know 
	that $0<P_K<1$.
\end{proof}

\subsection[Exact conditioning]{Conditioning on exactly $k_n$ successes}

For the sake of completeness, we finally treat the case of conditioning on the 
total number of successes being equal to~$k_n$. The situation is not very 
interesting here.

\begin{theorem}
	Let $\hat\X_n$ be a random vector having the conditional distribution 
	of~$\X_n$, conditioned on the event $\{\Sigma_n=k_n\}$. Define $\hat\Y_n$ 
	similarly. If all $\beta_i$ ($i\in \{1,\dots,M\}$) are equal, then $\hat 
	\X_n$ and~$\hat \Y_n$ have the same distribution for every $n\geq 1$. 
	Otherwise, $\sup \Pr(\hat \X_n = \hat \Y_n)\to 0$ as $n\to\infty$.
\end{theorem}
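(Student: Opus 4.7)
My plan is as follows. The first assertion is immediate from Proposition~\ref{prop:equallaws}: in the block framework of Section~\ref{ssec:framework} every position inside block~$i$ inherits the block-level ratio as its position-level $\beta$, so the hypothesis translates verbatim to condition~(i) of Proposition~\ref{prop:equallaws} for the full $n$-vector, and (i)$\Rightarrow$(ii) then gives $\Law(\X_n \mid \Sigma_n = k_n) = \Law(\Y_n \mid \Sigma_n = k_n)$ for every~$n\geq 1$. For the second assertion my strategy is to exhibit a single block~$i$ in which the marginal counts $\hat X_{in}$ and $\hat Y_{in}$ concentrate around different linear-in-$n$ values; this already rules out coordinate-wise equality, and hence $\hat\X_n = \hat\Y_n$, with vanishing probability under any coupling.

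To set this up, let $d_{1n},\dots,d_{Mn}$ be the solution of the analogue of~\eqref{eqn:centres} with $q_i$ in place of $p_i$, write $d_i := \lim_n d_{in}$, and let $A_n'$, $A'$ be the analogues of $A_n$ and $A$ from~\eqref{eqn:An} for the $d_{in}$-system. My first step is a short algebraic observation: if $c_i = d_i$ for every~$i$, then
\[
\frac{A}{A'} = \frac{\frac{1-c_i}{c_i} \cdot \frac{p_i}{1-p_i}}{\frac{1-c_i}{c_i} \cdot \frac{q_i}{1-q_i}} = \frac{p_i(1-q_i)}{(1-p_i)q_i} = \beta_i
\]
does not depend on~$i$, so all $\beta_i$ coincide. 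Contrapositively, if the $\beta_i$ are not all equal, then $c_i \neq d_i$ for some~$i$, which I now fix.

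For this~$i$, set $\delta := \abs{c_i - d_i}\alpha_i / 3 > 0$. Applying Lemma~\ref{lem:centres} to the $\X_n$-system with $r = \floor{\eps n / M}$ for a small enough $\eps > 0$, combined with $c_{in}m_{in}/n \to c_i\alpha_i$, gives $\Pr(\abs{\hat X_{in}/n - c_i\alpha_i} > \delta) \to 0$; the same lemma applied verbatim to the $\Y_n$-system yields $\Pr(\abs{\hat Y_{in}/n - d_i\alpha_i} > \delta) \to 0$. Both bounds involve only the (fixed) marginal laws of $\hat X_{in}$ and $\hat Y_{in}$ and so survive any coupling. On the intersection of the two concentration events, $\hat X_{in} = \hat Y_{in}$ would force $\abs{c_i - d_i}\alpha_i \leq 2\delta$, contradicting the choice of~$\delta$; a union bound therefore gives
\[
\sup \Pr(\hat\X_n = \hat\Y_n) \;\leq\; \sup \Pr(\hat X_{in} = \hat Y_{in}) \;\to\; 0,
\]
with the suprema taken over all couplings. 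The only step with any content is the algebraic reduction showing that $c_i = d_i$ for every~$i$ forces all $\beta_i$ to coincide; the rest is a routine concentration-plus-union-bound argument resting on Lemma~\ref{lem:centres}.
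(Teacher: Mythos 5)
Your proposal is correct and follows essentially the same route as the paper. The first assertion is handled identically via Proposition~\ref{prop:equallaws}, and for the second assertion the paper itself gives only a one-line sketch (``by a similar argument as in the proof of Lemma~\ref{lem:abovehatalpha}; instead of Theorem~\ref{thm:lln} use Lemma~\ref{lem:centres}''), which is exactly what you flesh out: identify a block with $c_i \neq d_i$, then use the concentration inequality of Lemma~\ref{lem:centres} to show $\hat X_{in}/n$ and $\hat Y_{in}/n$ converge in probability to distinct constants, which kills $\Pr(\hat\X_n = \hat\Y_n)$ under any coupling. Your algebraic reduction via the ratio $A/A' = \beta_i$ is a slightly cleaner rendering of the same fact the paper establishes in the proof of Lemma~\ref{lem:abovehatalpha} (there via the identity $\frac{1-d_i}{d_i}\frac{d_j}{1-d_j}\beta_j = \beta_i\frac{1-c_i}{c_i}\frac{c_j}{1-c_j}$), and you correctly avoid the case split on $\alpha$ versus $\sum q_i\alpha_i$ since under exact conditioning the centres are always $c_i m_{in}$ and $d_i m_{in}$.
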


\begin{proof}
	If all $\beta_i$ ($i\in\{1,\dots,M\}$) are equal, then by 
	Proposition~\ref{prop:equallaws} we have that $\hat \X_n$ and~$\hat \Y_n$ 
	have the same distribution for every $n\geq 1$. If $\beta_i\neq \beta_j$ 
	for some $i,j\in\{1,\dots,M\}$, then it can be shown that $\sup\Pr(\hat 
	\X_n \leq \hat \Y_n)\to 0$ as $n\to\infty$, by a similar argument as in 
	the proof of Lemma~\ref{lem:abovehatalpha}; instead of 
	Theorem~\ref{thm:lln} use Lemma~\ref{lem:centres}.
\end{proof}

\bibliographystyle{amsplain}
\bibliography{StochasticDomination}

\end{document}